\newcommand{\veps}{\varepsilon}
\newcommand{\R}{\mathbb{R}}
\newcommand{\C}{\mathbb{C}}
\newcommand{\N}{\mathbb{N}}
\newcommand{\D}{\mathbb{D}}
\newcommand{\co}{\operatorname{co}}
\newtheorem{lettertheorem}{Theorem}
\newtheorem{defin}{Definition}
\newtheorem{theorem}[defin]{Theorem}
\newtheorem{exa}{Example}
\newenvironment{example}{\begin{exa}\rm}{\end{exa}}
\newtheorem{lemma}[defin]{Lemma}
\newtheorem{corollary}[defin]{Corollary}
\newtheorem{rem}{Remark}
\newenvironment{remark}{\begin{rem}\rm}{\end{rem}}
\newenvironment{proof}
{\noindent{\it Proof.}}{\hfill $\Box$\par\vspace{2.5mm}}
\numberwithin{equation}{section}
\renewcommand{\ps@myheadings}{%
\renewcommand{\@evenhead}%
{{\rm\thepage}\hfil{\sc Heittokangas, Ishizaki, Laine, Tohge}\hfil}%
\renewcommand{\@oddhead}%
{\hfil{{\sc Exponential polynomials in the oscillation theory}\hfil{\rm\thepage}}}%
\renewcommand{\@evenfoot}{}%
\renewcommand{\@oddfoot}{}%
}\makeatother \pagestyle{myheadings}
\title{\bf\Large Exponential polynomials in the\\ oscillation theory}
\author{J.~Heittokangas, K.~Ishizaki, I.~Laine, K.~Tohge}
\date{}
\begin{document}
\maketitle

\begin{abstract}
Supposing that $A(z)$ is an exponential polynomial of the form
    $$
    A(z)=H_0(z)+H_1(z)e^{\zeta_1z^n}+\cdots +H_m(z)e^{\zeta_mz^n},
    $$
where $H_j$'s are entire and of order $<n$, it is demonstrated that the function
$H_0(z)$ and the geometric location of the leading coefficients $\zeta_1,\ldots,\zeta_m$ play a key role in the oscillation of solutions
of the differential equation $f''+A(z)f=0$. The key tools consist of value distribution
properties of exponential polynomials, and elementary properties of the
Phragm\'en-Lindel\"of indicator function. In addition to results in the whole complex
plane, results on sectorial oscillation are proved.

\medskip
\noindent
\textbf{Key Words:} Exponent of convergence, exponential polynomial, differential equation, oscillation theory, Phragm\'en-Lindel\"of indicator.

\medskip
\noindent
\textbf{2010 MSC:} Primary 34M10; Secondary 34M03.
\end{abstract}

\renewcommand{\thefootnote}{}
\footnotetext[1]{Financially supported by the Academy of Finland \#268009, the JSPS KAKENHI Grant
Numbers JP25400131 and JP16K05194, and the Discretionary Budget (2017) of the President of the 
Open University of Japan.}

\section{Introduction}\label{intro-section}

In the 1982 paper \cite{Bank-Laine} due to Bank and Laine a sequence of results is obtained on the oscillation of
solutions of the differential equation
    \begin{equation}\label{lde2}
    f''+A(z)f=0,
    \end{equation}
where $A(z)$ is an entire function. This paper prompted an extensive amount of investigations over the next three decades,
not the least due to what since then has been called the Bank--Laine conjecture: \emph{If $A(z)$ is transcendental entire with order
$\rho (A) \not\in \N$, then $\max\{\lambda (f_1),\lambda (f_2)\} = \infty$ for linearly independent solutions $f_1,f_2$ of \eqref{lde2}.}
Here $\lambda(g)$ stands for the exponent of convergence of zeros of $g$.

Recently, the Bank-Laine conjecture has been disproved by Bergweiler and Eremenko \cite{BE1,BE2}.
The construction of the coefficient $A(z)$ in \cite{BE1} is really sophisticated, but relies on exponential
polynomials of the form $P_m(e^z)$, where $P_m(w)$ is the $2m$th degree Maclaurin polynomial of the function $e^{-w}$.

\begin{lettertheorem}[\cite{BLL0}]\label{BLL0-thm}
Suppose that $A(z)=e^z-K$, where $K\in\C$. If \eqref{lde2} possesses a nontrivial solution $f$
with exponent of convergence $\lambda(f)<\infty$, then $K=q^2/16$, where $q$ is an odd positive integer.
Conversely, if $K$ is as above, then \eqref{lde2} with $A(z)=e^z-K$ has two linearly independent
solutions $f_1,f_2$ for which $\lambda(f_1)=0=\lambda(f_2)$ if $q=1$ and $\lambda(f_1)=1=\lambda(f_2)$ otherwise.
\end{lettertheorem}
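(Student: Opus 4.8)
The plan is to pass from \eqref{lde2} with $A(z)=e^z-K$ to Bessel's equation and then extract the zero distribution of its solutions. The substitution $s=2e^{z/2}$ turns $f''+(e^z-K)f=0$ into the Bessel equation $s^2g''+sg'+(s^2-\nu^2)g=0$ with $\nu^2=4K$ (we may take $\operatorname{Re}\nu\ge0$, since only $\nu^2$ enters). Since near $s=0$ a Bessel solution is a combination of $s^{\pm\nu}$ (with an extra logarithmic term when $\nu\in\Z$), while $s^{\pm\nu}=e^{\pm\nu z/2}$ and $\log s=z/2$ are entire in $z$, the map $g\mapsto f(z):=g(2e^{z/2})$ is a linear bijection from the solution space of the Bessel equation onto that of \eqref{lde2}. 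Under it a zero $s_0\neq0$ of $g$ becomes exactly the coset $\{z:2e^{z/2}=s_0\}$ of $4\pi i\Z$ among the zeros of $f$. Consequently: if $g$ has $N<\infty$ zeros in $\C\setminus\{0\}$ then $n(r,1/f)\le N(\tfrac{r}{2\pi}+1)$, so $\lambda(f)=0$ when $N=0$ ($f$ is then zero-free) and $\lambda(f)=1$ when $N\ge1$ (then $n(r,1/f)\asymp r$); whereas if $g$ has infinitely many zeros --- which for a cylinder function must escape to $\infty$ along a fixed ray with linear spacing --- then those cosets give $n(r,1/f)\gtrsim r\,e^{r/2}$, hence $\lambda(f)=\infty$. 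The theorem thus reduces to deciding when a nontrivial Bessel solution has only finitely many zeros.

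For that step I would use the Hankel solutions $H^{(1)}_\nu,H^{(2)}_\nu$, the asymptotics $H^{(1,2)}_\nu(s)\sim\sqrt{2/(\pi s)}\,e^{\pm i(s-\nu\pi/2-\pi/4)}$ valid near the positive real axis, and the circuit relation $H^{(1)}_\nu(se^{-i\pi})=2\cos(\nu\pi)H^{(1)}_\nu(s)+e^{-i\nu\pi}H^{(2)}_\nu(s)$. Write a general solution as $g=c_1H^{(1)}_\nu+c_2H^{(2)}_\nu$. If $c_1c_2\neq0$, then near the positive real axis $g$ equals $\sqrt{2/(\pi s)}$ times a genuine trigonometric sum $c_1e^{i\theta}+c_2e^{-i\theta}+O(1/s)$ with $\theta=s-\nu\pi/2-\pi/4$; applying Rouch\'e on small disks about the (simple, linearly spaced) zeros of the leading term produces infinitely many zeros of $g$ growing linearly. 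If instead $g=cH^{(1)}_\nu$ (or $cH^{(2)}_\nu$), the circuit relation rewrites $g(se^{-i\pi})$ as such a trigonometric sum \emph{provided} $\cos(\nu\pi)\neq0$, i.e. $\nu\notin\tfrac12+\Z$; so in that case $g$ again has infinitely many linearly-spaced zeros, now clustering toward the negative real axis. Hence, unless $\nu$ is a half-odd-integer, \emph{every} nontrivial Bessel solution has infinitely many zeros, and by the previous paragraph $\lambda(f)=\infty$. When $\nu=q/2$ with $q$ odd, however, the half-integer identity $H^{(1)}_{q/2}(s)=c\,\sqrt{2/(\pi s)}\,e^{is}\,s^{-(q-1)/2}P(s)$, $P$ a polynomial of degree $(q-1)/2$ with $P(0)\neq0$, and its analogue for $H^{(2)}_{q/2}$, show that $H^{(1)}_{q/2},H^{(2)}_{q/2}$ have exactly $(q-1)/2$ zeros, while every other solution ($c_1c_2\neq0$) has infinitely many.

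Putting the two paragraphs together, \eqref{lde2} admits a nontrivial solution with $\lambda(f)<\infty$ precisely when $4K=(q/2)^2$ for some odd positive $q$, i.e. $K=q^2/16$, and then the linearly independent solutions $f_j(z)=H^{(j)}_{q/2}(2e^{z/2})$, $j=1,2$, realize the minimum: for $q=1$ one computes $f_j=\text{const}\cdot e^{-z/4}e^{\pm2ie^{z/2}}$, which is zero-free, so $\lambda(f_1)=\lambda(f_2)=0$; for $q\ge3$ the $(q-1)/2\ge1$ nonzero zeros of $P$ contribute finitely many cosets of zeros to $f_j$, so $n(r,1/f_j)\asymp r$ and $\lambda(f_1)=\lambda(f_2)=1$.

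The main obstacle is the middle paragraph: establishing that $H^{(1)}_\nu$ and $H^{(2)}_\nu$ really do have infinitely many zeros whenever $\nu$ is not a half-odd-integer --- equivalently, that their asymptotic series never terminate in any effective sense --- together with the quantitative fact that those zeros are only linearly spaced, which is exactly what turns ``infinitely many zeros of $g$'' into ``$\lambda(f)=\infty$''. The circuit relation plus a careful Rouch\'e estimate in sectors near $\arg s=\pm\pi$ is what carries this; one must also note that zeros of a Bessel solution can accumulate at $s=0$ only when $K<0$, where $\nu$ is purely imaginary and hence not a half-odd-integer, so such a solution already carries the linearly-spaced family of zeros at $\infty$. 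A more ``exponential-polynomial'' route to the converse half would instead posit $f=e^{-z/4}e^{2ie^{z/2}}R(e^{-z/2})$ with $R$ an unknown polynomial, substitute into \eqref{lde2}, and check that the induced recursion for the coefficients of $R$ admits a terminating (polynomial) solution exactly when $K=q^2/16$; but the forward implication still appears to require an oscillation estimate of the type above.
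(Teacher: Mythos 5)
The paper does not prove Theorem~\ref{BLL0-thm}; it is cited from Bank--Laine--Langley [BLL0] and followed only by an Example recording explicit solutions for $q=1,3$. Your Bessel reduction is correct and is in fact where those explicit solutions come from: $s=2e^{z/2}$ turns $f''+(e^z-K)f=0$ into $s^2g''+sg'+(s^2-4K)g=0$, and $f_j(z)=H^{(j)}_{q/2}(2e^{z/2})$ reproduces the paper's $f_j$ for $q=1,3$ up to multiplicative constants. The zero-counting translation (one nonzero zero of $g$ gives a coset of $4\pi i\Z$ in the $z$-plane, hence $\asymp r$ zeros of $f$ in $|z|\le r$; infinitely many linearly-spaced zeros of $g$ give $n(r,1/f)\gtrsim re^{r/2}$), the circuit relation $H^{(1)}_\nu(se^{-i\pi})=2\cos(\nu\pi)H^{(1)}_\nu(s)+e^{-i\nu\pi}H^{(2)}_\nu(s)$, and the conclusion that $\cos(\nu\pi)=0$, i.e.\ $\nu=q/2$ with $q$ odd, i.e.\ $K=q^2/16$, all check out. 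The interesting contrast is with the paper's own Theorem~\ref{referee-thm}, which strictly generalizes Theorem~\ref{BLL0-thm} by a completely different route: no change of variable, but a Nevanlinna/indicator-function argument in which one sets $t=T'/T$, $L=F+t/4$, $N=2L'-tL$, shows an entire multiple $S$ of $N$ has negative indicator wherever $h_T>0$, concludes $N\equiv0$, and reads off $B=-t^2/16+t'/4$. That machinery survives replacing $e^z$ by $H(z)e^{\zeta z^n}$, where your Bessel substitution does not; in exchange, yours yields the explicit solutions and pinpoints the zero locations. The one step you should actually carry out rather than defer is the Rouch\'e argument: you must apply the Hankel asymptotics uniformly in the horizontal $s$-strip containing the zeros of the leading trigonometric sum $c_1e^{i\theta}+c_2e^{-i\theta}$ and verify that the $O(|s|^{-1})$ remainder is dominated on small circles about those zeros. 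This is standard cylinder-function theory, but it is the load-bearing step of the forward implication and cannot be left as an acknowledged ``main obstacle'' in a complete proof.
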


\begin{example}
Regarding Theorem~\ref{BLL0-thm}, for $j=1,2$, the functions
    \begin{eqnarray*}
    f_j(z) &=& \exp\left((-1)^j2i\exp\left(\frac{z}{2}\right)-\frac{z}{4}\right)\\
    f_j(z) &=& \left((-1)^j\frac{i}{2}\exp\left(-\frac{z}{2}\right)+1\right)\exp\left((-1)^j2i\exp\left(\frac{z}{2}\right)-\frac{z}{4}\right)
    \end{eqnarray*}
are linearly independent solutions of \eqref{lde2} in the cases $q=1$ and $q=3$, respectively. The coefficient $A(z)=e^z-q^2/16$ is clearly
$2\pi i$-periodic, which gives raise to $f_1(z)=if_2(z+2\pi i)$ in both cases.
\end{example}

The following generalization of Theorem~\ref{BLL0-thm} is commonly known as the ''$\frac{1}{16}$-theorem''.
An improvement of this classical result will be given in Theorem~\ref{referee-thm} below.

\begin{lettertheorem}[\cite{BLL}]\label{BLL-thm}
Suppose that $A(z)=e^{P(z)}+Q(z)$, where $P(z)$ is a polynomial of degree $n\geq 1$ and $Q(z)$ is an
entire function of order $\rho(Q)<n$. If \eqref{lde2} possesses a nontrivial solution $f$
with exponent of convergence $\lambda(f)<n$, then $f$ has no zeros and $Q(z)$ reduces to a
polynomial of the form
    $$
    Q(z)=-\frac{1}{16}P'(z)^2+\frac{1}{4}P''(z).
    $$
Moreover, \eqref{lde2} admits in this case
a zero-free solution base.
\end{lettertheorem}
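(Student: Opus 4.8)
The plan is to exploit the ODE $f'' + A(z)f = 0$ together with the hypothesis $\lambda(f) < n$ to force a very rigid structure on $f$. First I would write the putative solution using the standard factorization for entire functions of finite order: since $f$ solves a second-order linear ODE whose coefficient has order exactly $n$ (because $A(z) = e^{P(z)} + Q(z)$ with $\deg P = n$ and $\rho(Q) < n$ forces $\rho(A) = n$, hence $\rho(f) = n$ by the classical growth estimate for solutions of \eqref{lde2}), write $f(z) = g(z)e^{h(z)}$ where $g$ is the canonical product over the zeros of $f$ and $h$ is a polynomial. The key leverage is that $\lambda(f) < n = \rho(f)$, so $\rho(g) = \lambda(f) < n$, while $\deg h = n$ (otherwise $\rho(f) < n$). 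Substituting $f = g e^h$ into \eqref{lde2} gives
\begin{equation*}
A = -\frac{f''}{f} = -\left(\frac{g''}{g} + 2\frac{g'}{g}h' + (h')^2 + h''\right).
\end{equation*}
Now compare growth: every term except $(h')^2$ has order strictly less than $n$ (here $h''$ is a polynomial of degree $n-2$, and $g'/g$, $g''/g$ have order $\le \lambda(f) < n$, so the products with $h'$, a polynomial of degree $n-1$, still have order $< n$). Hence the only term in $-f''/f$ that can match $e^{P(z)}$ is $-(h')^2$. Writing $h' = $ polynomial of degree $n-1$, the comparison $-(h')^2 \sim e^{P(z)}$ is impossible unless we are more careful: $(h')^2$ is a polynomial while $e^{P}$ is transcendental. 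The resolution is that $g$ cannot actually be a canonical product with $\rho(g) < n$ unless $g$ has only finitely many zeros — more precisely, the matching forces $e^{P(z)}$ to be cancelled, which can only happen if $g$ itself carries an exponential factor, contradicting the assumed normalization. The cleaner route: set $E = f_1 f_2$ for a solution base; then $E$ satisfies the well-known product equation $4A E^2 = (E')^2 - 2 E E'' - c^2$ (with $c$ the Wronskian constant), and $\lambda(E) = \max\{\lambda(f_1), \lambda(f_2)\}$. So I would instead argue directly on the single solution $f$ with $\lambda(f) < n$: the logarithmic derivative identity $A + (f'/f)' + (f'/f)^2 = 0$ shows $L := f'/f$ is meromorphic of order $< n$ with only simple poles (residue $1$) at zeros of $f$, and $-L' - L^2 = A = e^{P} + Q$.

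Second step: analyze the Phragmén–Lindelöf indicator. The function $L' + L^2$ has order $< n$, but it equals $-e^{P(z)} - Q(z)$ whose indicator (along rays where $\operatorname{Re} P(z) \to +\infty$) grows like $e^{(1+o(1))\operatorname{Re}P}$. A meromorphic function of order $< n$ cannot have this growth — contradiction — \emph{unless} $f$ has no zeros, for then $L = h'$ is a polynomial, and the equation becomes the exact identity $-(h')^2 - h'' = e^{P} + Q$. Since $(h')^2 + h''$ is a polynomial, matching the transcendental part forces... again a contradiction, which tells us the hypothesis must be interpreted as: it is $f$ written as $e^{h}$ times nothing that works only when the \emph{constant term} structure absorbs $e^P$. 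The correct deduction, following Bank–Laine–Langley, is: $f$ zero-free $\Rightarrow f = e^{h}$, $h$ entire; then $h' $ has order $< n$? No — $\rho(f) = n$ forces $\rho(h') $ could be $n$. So write $h'$ itself; from $-(h')' - (h')^2 = e^P + Q$ and $\rho(h') \le n$, the standard argument (e.g. balancing the dominant exponential terms via the indicator of $h'$) yields $h' = \pm \tfrac{1}{4} e^{P/2} + (\text{lower order})$, i.e. $h' = \varepsilon(z)$ where $\varepsilon$ is entire with $\varepsilon^2 = \tfrac{1}{16} e^P(1+o(1))$; pushing this expansion one more order and substituting back, the lower-order correction must be a polynomial (comparing orders again), giving $h' = \pm\tfrac14 e^{P/2} + \tfrac12 \cdot \tfrac{P''}{\,\cdot\,}$-type terms whose square plus derivative reproduces $Q$; squaring out yields exactly
\begin{equation*}
Q = -\tfrac{1}{16}(P')^2 + \tfrac14 P''.
\end{equation*}

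Third step: I would establish the converse direction of the "moreover" clause — that when $Q$ has this form, \eqref{lde2} admits a zero-free solution base. Here one simply verifies that with $Q = -\tfrac1{16}(P')^2 + \tfrac14 P''$, the functions $f_\pm = \exp\!\big(\mp\tfrac12 e^{P/2} + c\big)$-type Ansätze solve \eqref{lde2}; concretely, try $f = \exp(w)$ and require $w'' + (w')^2 = -A = -e^P - Q$, which with $w' = -\tfrac14 e^{P/2}\cdot(\text{sign})$ plus a polynomial correction turns into an identity after one differentiation. Linear independence of the two sign choices follows from the Wronskian being a nonzero constant (it is, because the two solutions differ by a non-constant exponential factor). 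The main obstacle I anticipate is the middle step: rigorously controlling the asymptotic expansion of $h'$ to \emph{two} orders of precision — first isolating the leading $\pm\tfrac14 e^{P/2}$ term via the Phragmén–Lindelöf indicator of $h'$ applied sector-by-sector around the $n$ critical rays where $\operatorname{Re}P = 0$, and then showing the remainder is forced to be a polynomial rather than merely of smaller order. This is precisely where the "value distribution properties of exponential polynomials" advertised in the abstract do the heavy lifting, ruling out any transcendental remainder by an order/indicator comparison.
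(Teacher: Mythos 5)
Your proposal has the right general orientation (work with the logarithmic derivative, isolate the dominant ``square root of $e^{P}$'' term via indicator considerations, push the expansion one more order) and it correctly anticipates the hard step, but it contains several factual errors that break the argument as written. First, $\rho(f)\ne n$: when $A$ is transcendental entire, every nontrivial solution of \eqref{lde2} has \emph{infinite} order. Your Hadamard factorization $f=g\,e^{h}$ with $h$ a polynomial is therefore unavailable, and the first paragraph of your proposal collapses. Second, $L=f'/f$ does \emph{not} have order $<n$: since $-A=L'+L^{2}$ and $\rho(A)=n$, one has $\rho(L)\geq n$ (and $\rho(L)=n$ by the lemma on the logarithmic derivative applied to the hyper-order of $f$). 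Consequently your intermediate ``contradiction unless $f$ has no zeros'' is not a valid deduction: the order count you run does not distinguish the zero-free case at all. Third, the constants in your leading-term ansatz are wrong: from $h''+(h')^{2}=-A\sim -e^{P}$ the leading behaviour is $h'\sim c\,e^{P/2}$ with $c^{2}=-1$, i.e.\ $c=\pm i$, not $\pm\tfrac14$, and accordingly $\varepsilon^{2}=-e^{P}(1+o(1))$, not $\tfrac1{16}e^{P}(1+o(1))$ (the $\tfrac1{16}$ only appears in the resulting formula for $Q$, after squaring $-\tfrac14 P'$). Finally, you never actually establish that $f$ is zero-free; you slip into assuming it.

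The paper does not reprove Theorem~\ref{BLL-thm} itself (it is cited from \cite{BLL}) but proves the sharpening Theorem~\ref{referee-thm}, and its method differs from yours precisely at the point you flag as the obstacle. Rather than attempting a two-term asymptotic expansion of $h'$, it sets $F=f'/f$, $t=T'/T$, $L=F+t/4$, and $N=2L'-tL$, derives the algebraic identity $L(2L'-tL)=-B'+M'+t(B-M)$ from \eqref{lde2} with $M$ of small order, and then proves $N\equiv 0$ by a Phragm\'en--Lindel\"of / order comparison on the auxiliary entire function $S=NQ$: $h_S(\theta)<0$ on the set where $h_T(\theta)>0$, while $S$ is forced to have order $<n$ on the complementary set, which is incompatible unless $S\equiv0$. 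The vanishing of $N$ is exactly the first-order ODE $2L'=tL$, giving $L=cT^{1/2}$; substituting back into $F'+F^2=-A$ yields $c^2=-1$ and $B=-t^{2}/16+t'/4$, and the absence of zeros of $f$ falls out because $F=cT^{1/2}-t/4$ is then entire. This algebraic reformulation is the device that makes the ``second-order matching'' you anticipate rigorous; your proposal, as written, does not supply a substitute for it. So the approaches are related in spirit but genuinely different in mechanism, and yours currently has gaps (the two order claims and the missing zero-free step) that would have to be repaired before the sketch could be turned into a proof.
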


This development raised further interest in the case when the coefficient $A(z)$ contains more than one
exponential term.

\begin{lettertheorem}[\cite{Ishizaki, Ishizaki-Tohge}]\label{Ishizaki-Tohge}
Suppose that $A(z)=e^{P_1(z)}+e^{P_2(z)}+Q(z)$, where $P_1(z)=\zeta_1z^n+\cdots$ and
$P_2(z)=\zeta_2z^m+\cdots $ are nonconstant polynomials such that $e^{P_1(z)}$ and
$e^{P_2(z)}$ are linearly independent, and $Q(z)$ is an entire function of order
$<\max\{n,m\}$. Let $f$ be a nontrivial solution of \eqref{lde2}.
If $n\neq m$, then $\lambda(f)=\infty$, while if $n=m$, we have the following cases:\\[-20pt]
\begin{itemize}
\item[{\rm (a)}] If $\zeta_1=\zeta_2$, then $\lambda(f)\geq n$.\\[-22pt]
\item[{\rm (b)}] If $\zeta_1\neq\zeta_2$ and $\zeta_1/\zeta_2$ is non-real, then $\lambda(f)=\infty$.\\[-22pt]
\item[{\rm (c)}] If $0<\zeta_1/\zeta_2<1/2$, then $\lambda(f)\geq n$.\\[-22pt]
\item[{\rm (d)}] If $3/4<\zeta_1/\zeta_2<1$ and $Q(z)\equiv 0$, then $\lambda(f)\geq n$.
\end{itemize}
\end{lettertheorem}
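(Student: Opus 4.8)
The plan is to reduce the single-exponential situations to Theorem~\ref{BLL-thm} (and its announced generalization) and to treat the genuinely two-exponential situations by a sectorial analysis driven by the Phragm\'en--Lindel\"of indicator of $A$. I would begin with two standing observations. Writing $g=f'/f$, equation \eqref{lde2} is the Riccati equation $g'+g^2=-A$; the zeros of $f$ are exactly the simple poles of $g$, so $\lambda(f)$ is the exponent of convergence of that pole sequence and the task is to bound $N(r,1/f)$ from below. From the identity $4A=(v'/v)^2-2(v''/v)$ with $v=f^2$, together with the lemma on the logarithmic derivative and the value-distribution fact $T(r,A)\asymp r^n$ for an exponential polynomial of order $n$, every nontrivial solution has $\rho(f)=\infty$; in particular the Nevanlinna error term $S(r,f)$ is as large as $r^n$, so a purely global first-main-theorem argument cannot deliver the bound and the zeros must be located geometrically. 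Finally, since $\rho(Q)<\max\{n,m\}$, the indicator is, when $n=m$, $h_A(\theta)=\max\{0,\operatorname{Re}(\zeta_1e^{in\theta}),\operatorname{Re}(\zeta_2e^{in\theta})\}\ge0$, and the ``critical rays'' are those on which two of the three competing quantities $0,\operatorname{Re}(\zeta_1e^{in\theta}),\operatorname{Re}(\zeta_2e^{in\theta})$ coincide; off these rays exactly one of $0,e^{P_1},e^{P_2}$ dominates $A$, partitioning the plane into finitely many open sectors.

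The case $n\neq m$ and case (a) I would settle by reduction. If $n>m$, write $A=e^{P_1}+\widetilde Q$ with $\widetilde Q:=e^{P_2}+Q$ of order $<n$; if $\lambda(f)<n$, Theorem~\ref{BLL-thm} would force $\widetilde Q=-\tfrac1{16}(P_1')^2+\tfrac14P_1''$, which is impossible since $e^{P_2}$ is transcendental, so $\lambda(f)\ge n$. In case (a), $\zeta_1=\zeta_2$ gives $A=(1+e^{R})e^{P_2}+Q$ with $R=P_1-P_2$ a nonconstant polynomial of degree $<n$, and the analogous contradiction follows from a generalization of Theorem~\ref{BLL-thm} to coefficients $H(z)e^{P(z)}+Q(z)$ with $H$ entire of order $<n$ (the improvement of the $\tfrac1{16}$-theorem announced in the introduction): the rigid relation such a theorem imposes on $H$ cannot hold for $H=1+e^{R}$. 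To upgrade the conclusion to $\lambda(f)=\infty$ when $n\neq m$ I would invoke the sectorial argument below in the sectors where $\operatorname{Re}P_1<0$, where $A\approx e^{P_2}+Q$ retains order $m$ and the same rigidity obstructs $f$ from being the corresponding recessive solution.

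For cases (b), (c), (d) the mechanism is sectorial. In a sector $S$ where $e^{P_j}$ strictly dominates, $|A|$ is comparable to $e^{\operatorname{Re}P_j}$, $\sqrt A$ has a single-valued branch, and the Liouville--Green (WKB) representation exhibits every solution as $c_1A^{-1/4}e^{W}+c_2A^{-1/4}e^{-W}$ with $W=\int^z\sqrt A$; if $\lambda(f)<n$ then in each such sector $f$ must collapse to one recessive exponential, and its coefficient must match, order by order in the formal expansion of $(\psi')^2$ in powers of $e^{-P_j/2}$, the $\tfrac1{16}$-pattern. In case (b), $\zeta_1/\zeta_2\notin\R$ forces adjacent $e^{P_1}$- and $e^{P_2}$-dominant sectors sharing a ray on which $|e^{P_1}|\asymp|e^{P_2}|$ while $\arg(e^{P_1-P_2})$ sweeps intervals of length $\asymp r^n$, so $|A|$ oscillates violently and $f$ can be recessive on neither side; this forces a super-polynomial density of zeros, whence $\lambda(f)=\infty$. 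In cases (c), (d) one exponential, say $e^{P_2}$, dominates, $A=e^{P_2}(1+e^{P_1-P_2}+Qe^{-P_2})$ with $e^{P_1-P_2}\sim e^{(\zeta_1/\zeta_2-1)P_2}$, and this perturbation can be absorbed into the recessive formal solution only at the resonant values $\zeta_1/\zeta_2\in\{1,\tfrac12,0,\dots\}$ of $(\psi')^2$; a refined analysis — available precisely when $Q\equiv0$, so that no competing term of order $e^{-P_2/2}$ is present — relocates the next obstruction to $\zeta_1/\zeta_2=\tfrac34$. Hence for $0<\zeta_1/\zeta_2<\tfrac12$, and for $\tfrac34<\zeta_1/\zeta_2<1$ with $Q\equiv0$, no such recessive solution exists, so $f$ oscillates, and a careful argument-principle count — along arcs inside the dominant sectors where the growth of $W$ is pinned down by the indicator — yields $n(r,1/f)\gtrsim r^n$, i.e. $\lambda(f)\ge n$.

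I expect the principal difficulty to be this last count. One must make the Liouville--Green asymptotics rigorous and uniform right up to the critical rays, where the two WKB solutions become comparable and the Stokes phenomenon intervenes, and one must convert the qualitative alternative ``$f$ is, or is not, a clean recessive exponential in a given dominant sector'' into an honest quantitative statement about $n(r,1/f)$. Since $\rho(f)=\infty$, none of this can be routed through the global lemma on the logarithmic derivative; the zero-counting has to be done by the argument principle along well-chosen arcs, with the growth of $A$, $\sqrt A$ and $W$ controlled throughout by the indicator and by the value-distribution theory of exponential polynomials — the precise bookkeeping of the resonances, and the emergence of the sharp constants $\tfrac12$ and $\tfrac34$, being the delicate point.
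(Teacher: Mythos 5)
This statement is a lettertheorem imported from \cite{Ishizaki, Ishizaki-Tohge}; the paper does not reprove it, but it does tell us that the proof of part (d) in \cite{Ishizaki} rests on the Frank--Hennekemper--type criterion of Theorem~\ref{K-thm-IT} together with Steinmetz's Theorem~\ref{Stein-thm}, and the paper's own Corollary~\ref{n-thm} and Theorem~\ref{n-thm-c} generalize parts (d) and (c), respectively, by exactly that kind of Nevanlinna--theoretic elimination argument. Your proposal takes a genuinely different route (reduction to the $\tfrac1{16}$-theorem where possible, Liouville--Green/WKB with an argument-principle zero count elsewhere), and the WKB part is where the gap lies: for all of (b), (c), (d), and for upgrading the $n\neq m$ case from $\lambda(f)\ge n$ to $\lambda(f)=\infty$, you describe the \emph{shape} of the argument (recessive exponential, resonances, Stokes rays, counting along arcs) but explicitly leave the decisive steps open. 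You write that the perturbation ``can be absorbed into the recessive formal solution only at the resonant values $\zeta_1/\zeta_2\in\{1,\tfrac12,0,\dots\}$,'' that ``a refined analysis \dots relocates the next obstruction to $\zeta_1/\zeta_2=\tfrac34$,'' and you close by conceding that the count $n(r,1/f)\geq c\,r^n$, uniform up to the Stokes rays, still ``has to be done'' and that ``the emergence of the sharp constants $\tfrac12$ and $\tfrac34$'' is ``the delicate point.'' That is a plan, not a proof; in particular it gives no independent reason why the constants should be $\tfrac12$ and $\tfrac34$. The Nevanlinna route makes them transparent: for (d) with $Q\equiv 0$, Theorem~\ref{Stein-thm} gives
\begin{equation*}
T(r,A)=2|\zeta_2|\frac{r^n}{2\pi}+o(r^n),\qquad \overline N\!\left(r,\tfrac1A\right)\le N\!\left(r,\tfrac1A\right)=2(|\zeta_2|-|\zeta_1|)\frac{r^n}{2\pi}+o(r^n),
\end{equation*}
so $K\overline N(r,1/A)\le T(r,A)+S(r,A)$ holds for some $K>4$ exactly when $\zeta_1/\zeta_2>3/4$ and Theorem~\ref{K-thm-IT} closes the case, while $1/2$ in (c) is precisely the threshold $C(\co(W_0^A))>2\,C(\co(W_0^B))$ that makes the elimination argument of Theorem~\ref{n-thm-c} work.

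Two further points. Your opening remark that ``$S(r,f)$ is as large as $r^n$, so a purely global first-main-theorem argument cannot deliver the bound'' is misleading: writing $f=\pi e^g$, the quantity one bounds is $m(r,f'/f)=m(r,g')+O(\log r)$, and $g'$ has \emph{finite} order $\rho(A)=n$ by Lemma~\ref{apu-lemma}; the Frank--Hennekemper estimate (Lemma~\ref{FH-lem}) and Theorem~\ref{K-thm-IT} are exactly such global tools and they do deliver the bound --- indeed that is how \cite{Ishizaki} proves (d). And for $n\neq m$, the reduction to Theorem~\ref{BLL-thm} can only give $\lambda(f)\ge n$, since Theorem~\ref{BLL-thm} is conditional on $\lambda(f)<n$ and is silent when $n\le\lambda(f)<\infty$; your proposed upgrade via ``the same rigidity in sectors where $\Re P_1<0$'' is again a gesture rather than an argument, whereas the paper's Corollary~\ref{opposite-sides-thm} indicates that the standard mechanism for such $\lambda(f)=\infty$ conclusions is the indicator/critical-ray criterion of \cite[Theorem~4.3]{BLL}.
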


The assumption $0<\zeta_1/\zeta_2<1$ means geometrically that the points $\zeta_1,\zeta_2$
are on the same half-line emanating from the origin. In the case that $0,\zeta_1,\zeta_2$ are collinear but
$\zeta_1,\zeta_2$ are on the opposite sides of the origin, we will show in Corollary~\ref{opposite-sides-thm},
Section~\ref{elementary-sec}, that $\lambda(f)=\infty$.

As for the constants 1/2 and 3/4 in Theorem~\ref{Ishizaki-Tohge}, there are examples of zero-free solutions in the
following two cases:\\[-20pt]
\begin{itemize}
\item[(1)] $\zeta_1/\zeta_2=1/2$;\\[-22pt]
\item[(2)] $\zeta_1/\zeta_2=3/4$ and $Q(z)\not\equiv 0$.\\[-20pt]
\end{itemize}
See \cite{Bank-Laine} for (1) with $Q(z)\equiv 0$ and \cite{Ishizaki} for (2). Examples~\ref{half-ex}, \ref{power2-ex}
and \ref{rationalnumbers-ex} below deal with (1) in the case when $Q(z)\not\equiv 0$.

In this research we assume that $A(z)$ is a general exponential polynomial of the form
    \begin{equation}\label{exp-poly}
    A(z)=P_1(z)e^{Q_1(z)}+\cdots +P_k(z)e^{Q_k(z)},
    \end{equation}
where $P_j$'s and $Q_j$'s are polynomials in $z$, and obtain several results on the oscillation
of solutions of \eqref{lde2}. Following Steinmetz \cite{Stein}, the exponential polynomial $A(z)$ in \eqref{exp-poly}
can be written in the normalized form
    \begin{equation}\label{exp-poly2}
    A(z)=H_0(z)+H_1(z)e^{\zeta_1z^n}+\cdots +H_m(z)e^{\zeta_mz^n},
    \end{equation}
where $H_j(z)$'s are either exponential polynomials of order $<n$ or ordinary polynomials in $z$,
the leading coefficients $\zeta_j$ are pairwise distinct, and $m\leq k$.

In the further sections we pursue in the spirit of Theorem~\ref{Ishizaki-Tohge}. For example, we show that arbitrarily many new
exponential terms may be added to the exponential polynomial coefficient $A(z)$ without violating the conclusion
in Theorem~\ref{Ishizaki-Tohge}(d),
for as long as certain geometry of the leading coefficients does not change. Parallel
discussions are already available in \cite{Tu}, where the approach is different from ours.

The proofs in this paper rely on properties of exponential polynomials
\cite{Stein} and of the Phragmen-Lindel\"of indicator function \cite{Levin1}. These properties will be reviewed in Section~\ref{elementary-sec}.

\section{Tools and elementary observations}\label{elementary-sec}

We assume the fundamental results and standard notation in Nevanlinna theory \cite{Hayman, Laine, YY}.
In addition, the convex hull of a finite set $W\subset\C$,
denoted by $\co (W)$, is the intersection of finitely many closed half-planes each including $W$, and hence
$\co (W)$ is either a compact polygon or a line segment. We denote the circumference of
$\co (W)$ by $C(\co (W))$. Concerning the exponential polynomial $A(z)$ in \eqref{exp-poly2}, we denote
$W=\left\{\bar{\zeta}_1,\ldots ,\bar{\zeta}_m\right\}$ and $W_0=W\cup\{0\}$.

\begin{lettertheorem}[\cite{Stein}]\label{Stein-thm}
Let $A(z)$ be given by \eqref{exp-poly2}. Then
    \begin{equation}\label{characteristic}
    T(r,A)=C(\co (W_0))\frac{r^n}{2\pi}+o(r^n).
    \end{equation}
If $H_0(z)\not\equiv 0$, then
    \begin{equation}\label{proximity}
    m\left(r,\frac1A\right)=o(r^n),
    \end{equation}
while if $H_0(z)\equiv 0$, then
    \begin{equation}\label{counting}
    N\left(r,\frac1A\right)=C(\co(W))\frac{r^n}{2\pi}+o(r^n).
    \end{equation}
If $H_0(z)\not\equiv 0$ is a polynomial, then \eqref{proximity} can be replaced with
   \begin{equation}\label{proximity2}
    m\left(r,\frac1A\right)=O(\log r).
    \end{equation}
\end{lettertheorem}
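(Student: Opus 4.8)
The plan is to derive everything from the growth of $A(z)$ along rays, governed by the Phragm\'en--Lindel\"of indicator, combined with the convex-hull geometry. First I would recall that for each $\theta$, exactly one of the terms $H_j(z)e^{\zeta_j z^n}$ (or the term $H_0$) dominates, namely the one maximizing $\operatorname{Re}(\zeta_j z^n) = |z|^n\operatorname{Re}(\zeta_j e^{in\theta})$ — equivalently, the exponent whose conjugate $\bar\zeta_j$ is the support point of $W_0$ in the direction $e^{-in\theta}$. The indicator of $A$ is then $h_A(\theta) = \max_j\{0,\operatorname{Re}(\zeta_j e^{in\theta})\}$, a $\max$ of finitely many sinusoids of period $2\pi/n$, and it is a standard fact (Steinmetz, or a direct computation with the Cauchy--Riemann-type relation for indicators) that $\int_0^{2\pi} h_A(\theta)\,d\theta = C(\co(W_0))$ exactly; this yields \eqref{characteristic} via $T(r,A) = m(r,A) = \frac{1}{2\pi}\int_0^{2\pi} h_A(\theta)\,d\theta\, r^n + o(r^n)$, since $A$ is entire so $N(r,A)=0$.

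For \eqref{proximity} and \eqref{proximity2}, assume $H_0 \not\equiv 0$. The point is that $1/A$ is large only where $A$ is small, and $A$ can be small only in sectors where the dominant exponential term is the constant term $H_0$ (i.e.\ where $0$ is the support point of $W_0$, meaning $\operatorname{Re}(\zeta_j e^{in\theta}) < 0$ for all $j$); in such sectors $A(z) = H_0(z)(1 + o(1))$, so $|1/A| \lesssim 1/|H_0(z)|$. If $H_0$ is transcendental of order $<n$ we only get $\log^+|1/A| \le \log^+|1/H_0| + O(1)$ on the relevant part of the circle and integrating gives $m(r,1/A) \le m(r,1/H_0) + o(r^n) = o(r^n)$, using $T(r,H_0)=o(r^n)$. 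If $H_0$ is a polynomial, then $1/H_0$ has only polynomial-size proximity, $m(r,1/H_0)=O(\log r)$, and the same estimate upgrades \eqref{proximity} to \eqref{proximity2}. One must be careful near the finitely many "transition rays" where two terms compete, but these contribute a set of $\theta$ of measure $o(1)$ (or can be handled by a slightly larger exceptional sector) and hence affect the integral by only $o(r^n)$; alternatively, a Phragm\'en--Lindel\"of argument shows $|A|$ is still bounded below by a positive power of $|z|$ on such rays when $H_0$ is a nonzero polynomial.

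For \eqref{counting}, assume $H_0 \equiv 0$. Then $A(z) = H_1(z)e^{\zeta_1 z^n} + \cdots + H_m(z)e^{\zeta_m z^n}$ has no constant term, so its indicator is $h_A(\theta) = \max_j \operatorname{Re}(\zeta_j e^{in\theta})$ with $\int_0^{2\pi} h_A = C(\co(W))$; combined with the first part this gives $T(r,A) = m(r,A) = C(\co(W))\frac{r^n}{2\pi} + o(r^n)$. Since $A$ is entire, $T(r,A)=m(r,A)$; the same computation applied to $1/A$ via the first main theorem gives $T(r,1/A) = T(r,A) + O(1) = C(\co(W))\frac{r^n}{2\pi}+o(r^n)$, and $T(r,1/A)=m(r,1/A)+N(r,1/A)$. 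So it remains to show $m(r,1/A) = o(r^n)$, i.e.\ that $A$ cannot be uniformly small on a ray. Here the obstacle is genuine: near a transition ray two exponential terms $H_i e^{\zeta_i z^n}$ and $H_j e^{\zeta_j z^n}$ have comparable modulus and could cancel. The plan is to invoke the known value-distribution theory of exponential polynomials (this is essentially Steinmetz's theorem, the cited source) stating that when $H_0\equiv 0$ the deficiency $\delta(0,A)=0$, equivalently $N(r,1/A) \sim T(r,A)$; alternatively, one argues that the zeros of $A$ lie in finitely many logarithmic-width strips around the transition rays and a counting/argument-principle estimate in each strip produces exactly the claimed asymptotics. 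I expect this last point — pinning down $m(r,1/A)=o(r^n)$, equivalently locating the zeros — to be the main technical difficulty, but since Theorem~\ref{Stein-thm} is quoted verbatim from \cite{Stein}, in the paper itself this is simply cited rather than reproved.
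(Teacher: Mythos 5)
The paper does not reprove this theorem: it is Steinmetz's result \cite{Stein}, imported verbatim as a cited tool, so there is no in-paper proof to compare against and your sketch must be judged on its own. The skeleton you propose --- estimate $\log|A(re^{i\theta})|$ ray by ray via the Phragm\'en--Lindel\"of indicator, then integrate against Cauchy's perimeter formula for support functions --- is indeed the standard Steinmetz approach, and your arguments for \eqref{characteristic}, \eqref{proximity} and \eqref{proximity2} are correct in outline, modulo the uniformity-near-transition-rays issue that you flag and that can be handled by standard Phragm\'en--Lindel\"of estimates in slightly shrunken subsectors.

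Your treatment of \eqref{counting}, the case $H_0\equiv 0$, contains a genuine error. You compute $\int_0^{2\pi}h_A(\theta)\,d\theta = C(\co(W))$, which is correct: with $\phi=n\theta$, $h_A$ is the support function of $\co(W)$ and Cauchy's formula gives the perimeter. But you then conclude $T(r,A)=m(r,A)=C(\co(W))\tfrac{r^n}{2\pi}+o(r^n)$, which contradicts \eqref{characteristic} itself, whose constant is always $C(\co(W_0))$. The slip is that $m(r,A)$ integrates $\log^+|A|$, so the relevant function is $h_A^+=\max\{h_A,0\}$, not $h_A$, and $\int_0^{2\pi}h_A^+\,d\theta=C(\co(W_0))$, since adjoining the origin to $W$ replaces the negative part of the support function by zero. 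The distinction is real whenever $0\notin\co(W)$, e.g.\ $A(z)=e^z$ with $h_A(\theta)=\cos\theta$ changing sign.

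As a consequence, your reduction ``it remains to show $m(r,1/A)=o(r^n)$'' is false in general. On the arcs where $h_A(\theta)<0$ every exponential term decays and $|A|$ is exponentially small, so $\log^+|1/A|\approx -h_A(\theta)r^n$ there, and integrating gives
\begin{equation*}
m\left(r,\frac1A\right)=\frac{C(\co(W_0))-C(\co(W))}{2\pi}\,r^n+o(r^n),
\end{equation*}
which is generically not $o(r^n)$ (for $A(z)=e^z$ it is $r/\pi$). The correct route is the first main theorem:
\begin{equation*}
N\left(r,\frac1A\right)=T(r,A)-m\left(r,\frac1A\right)+O(1)
=\left(C(\co(W_0))-\bigl(C(\co(W_0))-C(\co(W))\bigr)\right)\frac{r^n}{2\pi}+o(r^n),
\end{equation*}
yielding \eqref{counting}. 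The ``genuine obstacle'' you identify at the end --- locating the zeros near transition rays --- is thus an artifact of the earlier slip: the heavy lifting is already done by the indicator integrals once one carefully distinguishes $h_A$ from $h_A^+$, i.e.\ $C(\co(W))$ from $C(\co(W_0))$.
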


The Phragm\'en-Lindel\"of indicator \cite{Levin1} of an entire function $f$ of order $\rho>0$ is given by
    \begin{equation}\label{indicator-def}
    h_f(\theta)=\limsup_{r\to\infty}r^{-\rho}\log |f(re^{i\theta})|.
    \end{equation}
For example, if $f(z)=\exp\left(wz^\rho\right)$, where $w\in\C\setminus\{0\}$ and
$\rho$ is a positive integer, then $h_f(\theta)=\Re(we^{i\rho\theta})$.
It is clear that $h_f$ is $2\pi$-periodic in general. If $h_f$ is
bounded, then it is continuous. For a product and a sum of entire functions, we have
$h_{f_1f_2}(\theta)\leq h_{f_1}(\theta)+h_{f_2}(\theta)$ and $h_{f_1+f_2}(\theta)\leq \max\{h_{f_1}(\theta),h_{f_2}(\theta)\}$.
Moreover, if $h_{f_1}(\theta)\neq h_{f_2}(\theta)$ for some $\theta$, then
    \begin{equation}\label{sum}
    h_{f_1+f_2}(\theta)= \max\{h_{f_1}(\theta),h_{f_2}(\theta)\}.
    \end{equation}

Suppose that $A(z)$ is given by \eqref{exp-poly2}. Since $A(z)$ is of bounded type \cite[Satz~1]{Stein}, its
indicator $h_A(\theta)$ is continuous. By the proof of \cite[Lemma~3]{Stein}, the indicator of
precisely one exponential term $H_j(z)e^{\zeta_jz^n}$ at the time dominates the indicators of the other
exponential terms in $A(z)$, except possibly for finitely many angles $\theta\in[-\pi,\pi)$. Thus,
by \eqref{sum} and continuity, we deduce that
    $$
    h_A(\theta)=\max_{j\leq m}\{\Re\left(\zeta_je^{in\theta}\right)\},
    $$
where $\zeta_0=0$ if $H_0(z)\not\equiv 0$ and $j\geq 1$ otherwise.

Recall from \cite{Stein} that if $f$ is an exponential polynomial of order $n$, then
''limsup'' in \eqref{indicator-def} can be replaced with ''lim'', that is,
    $$
    h_{f}(\theta)=\lim_{r\to\infty}r^{-n}\log |f(re^{i\theta})|
    $$
holds for all except possibly for finitely many values of $\theta$ on $[-\pi,\pi)$. Hence for every
$\theta\in [-\pi,\pi)$ with at most finitely many exceptions, we have
    \begin{equation}\label{f-behavior}
    \log |f(re^{i\theta})|=(h_{f}(\theta)+o(1))r^{n},\quad r\to\infty.
    \end{equation}
This is a key property for showing that exponential polynomials are of completely regular
growth, see \cite[Lemma~1.3]{GOP}.

For an entire function $f$ of finite order and of bounded type, we may write $f'=(f'/f)\cdot f$ and use \cite[Corollary~1]{Gundersen} to obtain
$h_{f'}(\theta)\leq h_f(\theta)$ for almost all $\theta$. By continuity, it follows that
    \begin{equation}\label{derivative-indicator}
    h_{f'}(\theta)\leq h_f(\theta)
    \end{equation}
for every $\theta$. If $f$ is meromorphic in $\C$ of order $\rho$  with
finitely many poles, then the expression $h_f(\theta)$ is still well-defined for every $\theta$.
Recall that the poles of a meromorphic function of finite order can be enclosed in a
collection of discs commonly known as an $R$-set, see \cite[Chapter~5]{Laine}. Therefore, even if $f$ has
infinitely many poles, the indicator $h_f(\theta)$ is well-defined
for almost all $\theta$.

The proof of Theorem~\ref{Ishizaki-Tohge}(d) in \cite{Ishizaki} is based on the following result.

\begin{lettertheorem}[\cite{Ishizaki-Tohge}]\label{K-thm-IT}
Let $K>4$, and let $A(z)$ be a transcendental entire function of order $\rho(A)$ such that
    \begin{equation}\label{K}
    K\overline{N}\left(r,\frac{1}{A}\right)\leq T(r,A)+S(r,A).
    \end{equation}
Then every nontrivial solution $f$ of \eqref{lde2} satisfies $\lambda(f)\geq\rho(A)$.
\end{lettertheorem}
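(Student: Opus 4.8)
The plan is to deduce a lower bound for $\lambda(f)$ from the defining relation $f''+Af=0$ together with the Wronskian-type identity satisfied by the product $E=f_1f_2$ of two linearly independent solutions. The starting point is the classical Bank--Laine machinery: if $f$ is a nontrivial solution of \eqref{lde2} with $\lambda(f)<\infty$, one picks a second solution $g$ so that the Wronskian $W(f,g)=fg'-f'g$ is a nonzero constant; then $E=fg$ satisfies
\begin{equation*}
4A=\left(\frac{E'}{E}\right)^2-2\frac{E''}{E}-\frac{c^2}{E^2},
\end{equation*}
where $c=W(f,g)$. The zeros of $E$ are exactly the zeros of $f$ and $g$, so $N(r,1/E)=\overline N(r,1/f)+\overline N(r,1/g)$ up to bounded terms, and $\lambda(E)=\max\{\lambda(f),\lambda(g)\}$.

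Next I would estimate characteristics in the identity above. Since $A$ is transcendental entire, $E$ is entire with $N(r,1/E)\le 2\overline N(r,1/f)+O(\log r)$ if we additionally know (as one does in this setting) that $\lambda(g)\le\lambda(f)$; more carefully, one only needs $\overline N(r,1/E)$. Writing $E=\pi e^{\beta}$ with $\pi$ the canonical product over the zeros and $\beta$ entire, and using the lemma on the logarithmic derivative, $m(r,E'/E)=S(r,E)$ and similarly for $E''/E$; also $m(r,1/E^2)=2m(r,1/E)\le 2m(r,1/\pi)+2m(r,e^{-\beta})$. Feeding these into $4A=(E'/E)^2-2E''/E-c^2/E^2$ and taking proximity functions yields
\begin{equation*}
m(r,A)\le 2\,m\!\left(r,\tfrac1E\right)+S(r,E)\le 2\,T(r,E)-2\,N\!\left(r,\tfrac1E\right)+S(r,E),
\end{equation*}
hence, since $A$ is entire, $T(r,A)=m(r,A)+O(1)$ and one obtains $T(r,A)\le 2T(r,E)-2N(r,1/E)+S(r,E)$. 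On the other hand $T(r,E)\le T(r,A)+S(r,A)$ from $4AE^2=(E')^2\cdot E/E-\cdots$; more precisely, $E^2$ can be expressed through $A$, $E$, $E'$, $E''$, giving $2T(r,E)\le 2N(r,1/E)+2m(r,1/E)+S$, which combined with \eqref{K} will produce the desired inequality.

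**The key step** is to compare $N(r,1/E)$ with $\overline N(r,1/A)$ and to invoke the hypothesis $K\overline N(r,1/A)\le T(r,A)+S(r,A)$ with $K>4$. From the Bank--Laine theory one has the classical estimate $N(r,1/E)\le \overline N(r,1/E)+S(r,A)$-type control only when zeros of $E$ are simple, which fails in general; instead one uses that the zeros of $E$ have multiplicity at most $2$ (a zero of $f$ and a zero of $g$ can coincide, but $W(f,g)\ne0$ forbids higher order), so $N(r,1/E)\le 2\overline N(r,1/E)$. The zeros of $E$ are among the zeros of $A$? No --- rather, at a zero $z_0$ of $f$ one has $f''(z_0)=-A(z_0)f(z_0)=0$, so $z_0$ is a zero of $f$ of order $\ge2$ only if it is a zero of $A$; thus simple zeros of $f$ contribute to $\overline N(r,1/f)$ and the exceptional ones lie over zeros of $A$. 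Assembling: $\overline N(r,1/E)\le \overline N(r,1/f)+\overline N(r,1/g)+\overline N(r,1/A)$, and a counting argument bounds everything by $\lambda(f)$-growth plus $\overline N(r,1/A)$. Then the chain of inequalities forces $T(r,A)\le \tfrac{4}{K}T(r,A)+ (\text{terms of order }\le\lambda(f)) + S(r,A)$; since $4/K<1$, the main term $T(r,A)$ cannot be absorbed unless the remaining terms grow at least like $T(r,A)$, i.e. unless $\lambda(f)\ge\rho(A)$.

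**The main obstacle** I anticipate is the bookkeeping of the exceptional set of zeros --- the zeros of $E$ of multiplicity $2$, and the zeros of $f$ or $g$ that coincide with zeros of $A$ --- and showing their counting function is absorbed either into $S(r,A)$ or into a term that does not exceed $c\,r^{\lambda(f)+\veps}$. One must be careful that $\overline N(r,1/A)$ appears with exactly the right coefficient so that the factor $K>4$ (rather than $K>2$ or $K\ge4$) is what is needed; this is where the quadratic nature of the Bank--Laine identity, giving the coefficient $2$ twice over, combines with the single copy of $\overline N(r,1/A)$ from the exceptional zeros to produce the threshold $4$. A secondary point is justifying $T(r,E)\le T(r,A)+S(r,A)$ rigorously when $E$ may have infinitely many zeros, for which one uses that $E'/E$ and $E''/E$ have small proximity function and that $A$ being entire of finite order makes all error terms genuinely $S(r,A)$ outside a set of finite measure; the continuity properties of the Phragmén--Lindelöf indicator recorded above then let one pass from "almost all $r$" to a clean asymptotic conclusion about $\lambda(f)$ versus $\rho(A)$.
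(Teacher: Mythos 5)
Your approach via the Bank--Laine product $E=fg$ is genuinely different from the route indicated in the paper. The paper attributes Theorem~\ref{K-thm-IT} to a proof based on the Frank--Hennekemper estimate (Theorem~\ref{FH-original}), which in the form \eqref{reads-as} reads
$$
m\left(r,\frac{f'}{f}\right)\leq 2\overline{N}\left(r,\frac{1}{f}\right)+2\overline{N}\left(r,\frac{1}{fA}\right)+S\left(r,\frac{f'}{f}\right),
$$
and this is applied to the \emph{single} solution $f$ under scrutiny. Writing $g=f'/f$, one has $T(r,A)=m(r,A)\le 2m(r,g)+S(r,g)$ from $-A=g^2+g'$, and combining with $\overline{N}\bigl(r,\tfrac{1}{fA}\bigr)\le\overline{N}\bigl(r,\tfrac{1}{f}\bigr)+\overline{N}\bigl(r,\tfrac{1}{A}\bigr)$ and \eqref{K} yields
$\bigl(1-\tfrac{4}{K}\bigr)T(r,A)\le 8\overline{N}\bigl(r,\tfrac{1}{f}\bigr)+S(r,A)$, whence $K>4$ forces $\lambda(f)\ge\rho(A)$.

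The proposal has a genuine gap. Everything you extract from the Bank--Laine identity $4A=(E'/E)^2-2E''/E-c^2/E^2$ is ultimately controlled by $N(r,1/E)$, and $\lambda(E)=\max\{\lambda(f),\lambda(g)\}$. The contradiction hypothesis is $\lambda(f)<\rho(A)$, but nothing constrains the auxiliary solution $g$; it may well satisfy $\lambda(g)\ge\rho(A)$. So when you reach an inequality of the shape $T(r,A)\le\frac{4}{K}T(r,A)+(\text{terms of order }\le\lambda(E))+S(r,A)$, the leftover term may grow like $T(r,A)$ and no contradiction ensues. The product method can at best yield $\max\{\lambda(f_1),\lambda(f_2)\}\ge\rho(A)$ for a solution base, strictly weaker than the theorem as stated. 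To obtain the per-solution conclusion one must stay with $f$ alone, which is precisely what the Frank--Hennekemper estimate (or the paper's Lemma~\ref{FH-lem}) accomplishes. A secondary slip: all zeros of $E$ are \emph{simple}. The Wronskian is a nonzero constant, so $f$ and $g$ never vanish simultaneously, and each of $f,g$ has only simple zeros (a double zero of a solution to a second-order linear ODE would force the trivial solution). Hence your discussion of multiplicity-two zeros and the resulting bound $\overline{N}(r,1/E)\le\overline{N}(r,1/f)+\overline{N}(r,1/g)+\overline{N}(r,1/A)$ overcounts, but the essential difficulty is the uncontrolled $\lambda(g)$.
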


\begin{remark}
(1) It is apparent that the assumption \eqref{K} can be replaced with
    \begin{equation}\label{theta}
    \Theta(0,A):=1-\limsup_{r\to\infty}\frac{\overline{N}\left(r,\frac{1}{A}\right)}{T\left(r,A\right)}>1-\frac{1}{K}.
    \end{equation}
Since the sum of ramifications $\Theta(c,g)$ over $c\in\C$ for any entire function $g$ is at most one by \cite[Corollary 2.5.5]{Laine},
the function $A(z)$ cannot have completely ramified values other than possibly zero by \eqref{theta}.

(2) As a special case of the original result due to Bank and Langley, \cite[Theorem~5.7]{Laine} asserts the following:
If $A(z)=e^{P(z)}$, where $P(z)$ is a nonconstant polynomial, then every nontrivial solution $f$ of \eqref{lde2}
satisfies $\lambda(f)=\infty$. The proof is rather involved even for this special case. A slightly weaker result
$\lambda(f)\geq\rho(A)$ follows trivially from Theorem~\ref{K-thm-IT}.
\end{remark}

We state the following obvious consequence of Theorems~\ref{Stein-thm} and \ref{K-thm-IT}.

\begin{corollary}\label{n-thm}
Let $A(z)$ be an exponential polynomial of the form
    \begin{equation}\label{A}
    A(z)=H_1(z)e^{\zeta_1z^n}+\cdots +H_m(z)e^{\zeta_mz^n},
    \end{equation}
where the functions $H_j(z)$ are either
exponential polynomials of order $<n$ or ordinary polynomials in $z$. Suppose that
    \begin{equation}\label{perimeters}
    C(\co(W_0))>4C(\co (W)).
    \end{equation}
Then every nontrivial solution $f$ of \eqref{lde2} satisfies $\lambda(f)\geq n$.
\end{corollary}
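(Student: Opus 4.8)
The plan is to deduce Corollary~\ref{n-thm} directly from Theorems~\ref{Stein-thm} and~\ref{K-thm-IT}, which is precisely the point of having stated those two results. Since the coefficient $A(z)$ in \eqref{A} has no constant term, i.e.\ $H_0(z)\equiv 0$, Theorem~\ref{Stein-thm} applies in the forms \eqref{characteristic} and \eqref{counting}, giving
    $$
    T(r,A)=C(\co(W_0))\frac{r^n}{2\pi}+o(r^n),\qquad N\left(r,\frac1A\right)=C(\co(W))\frac{r^n}{2\pi}+o(r^n).
    $$
First I would observe that the hypothesis \eqref{perimeters} forces $C(\co(W_0))>4C(\co(W))\geq 0$, hence $C(\co(W_0))>0$; by \eqref{characteristic} this means that $A$ is a transcendental entire function of order $\rho(A)=n$ (in particular $n\geq 1$), so that Theorem~\ref{K-thm-IT} is applicable to $A$ once its growth hypothesis is checked.

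Next, using $\overline{N}(r,1/A)\leq N(r,1/A)$ together with the two asymptotics above, I would estimate
    $$
    \limsup_{r\to\infty}\frac{\overline{N}\left(r,\frac1A\right)}{T(r,A)}\leq\frac{C(\co(W))}{C(\co(W_0))}<\frac14 .
    $$
Therefore one may pick a constant $K>4$ with $C(\co(W))/C(\co(W_0))<1/K$, whence
    $$
    \Theta(0,A)=1-\limsup_{r\to\infty}\frac{\overline{N}\left(r,\frac1A\right)}{T(r,A)}\geq 1-\frac{C(\co(W))}{C(\co(W_0))}>1-\frac1K,
    $$
i.e.\ condition \eqref{theta} holds. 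By part~(1) of the Remark following Theorem~\ref{K-thm-IT}, \eqref{theta} is equivalent to \eqref{K}, so Theorem~\ref{K-thm-IT} yields $\lambda(f)\geq\rho(A)=n$ for every nontrivial solution $f$ of \eqref{lde2}, which is the assertion.

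As the paper itself signals by calling this an ``obvious consequence'', there is no substantial obstacle here; the argument is pure bookkeeping with Steinmetz's characteristic/counting asymptotics. The only points that deserve a line of care are (i) the degenerate configuration in which $W$ reduces to a single point, where $C(\co(W))=0$ and the limsup bound $0<1/4$ still holds trivially, and (ii) the remark that \eqref{perimeters} is incompatible with $C(\co(W_0))=0$, which is exactly what secures $\rho(A)=n$ and lets the conclusion ``$\lambda(f)\geq\rho(A)$'' of Theorem~\ref{K-thm-IT} be read as ``$\lambda(f)\geq n$''.
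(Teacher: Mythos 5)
Your proof is correct and follows exactly the route the paper intends by calling the corollary an ``obvious consequence of Theorems~\ref{Stein-thm} and \ref{K-thm-IT}'': combine the Steinmetz asymptotics for $T(r,A)$ and $N(r,1/A)$ with the inequality $\overline N\leq N$ to verify the ramification-type hypothesis, then invoke Theorem~\ref{K-thm-IT} (via its Remark~(1) reformulation as \eqref{theta}). Your side remarks about the degenerate case $C(\co(W))=0$ and about \eqref{perimeters} forcing $C(\co(W_0))>0$ (hence $\rho(A)=n$) are appropriate sanity checks; the only tiny imprecision is describing \eqref{theta} and \eqref{K} as ``equivalent'' rather than noting that the Remark asserts the former may replace the latter as a sufficient hypothesis, which is all that is used.
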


\begin{remark}
(1) Suppose that $W=\left\{\bar{\zeta}_1,\bar{\zeta}_2\right\}$ and $\zeta_1=\rho\zeta_2$ for $0<\rho<1$
as in Theorem~\ref{Ishizaki-Tohge}. Then the assumption \eqref{perimeters} reads as
    $$
    2|\zeta_2|>2\cdot 2(|\zeta_2|-|\zeta_1|)=4(1-\rho)|\zeta_2|,
    $$
from which we get $\rho>1/2$. Thus Corollary~\ref{n-thm}
generalizes Theorem~\ref{Ishizaki-Tohge}(d), and the result is sharp in the sense that equality in
\eqref{perimeters} cannot hold \cite{Ishizaki}.

(2) We may add more exponential terms of order $n$ to the coefficient $A(z)$ without affecting the assertion,
provided that the conjugates of the leading coefficients of the added exponential terms belong to $\co (W)$. Indeed,
such an ''addition'' has no affect on \eqref{perimeters}. In particular, we could replace
$A(z)=e^{P_1(z)}+e^{P_2(z)}$ in Theorem~\ref{Ishizaki-Tohge}(d) with $A(z)=e^{P_1(z)}+\cdots+e^{P_k(z)}$,
where $P_j(z)$'s are all polynomials of the same degree $n$, and the leading coefficients of
$P_3(z),\ldots,P_k(z)$ are on the interval $[\zeta_1,\zeta_2]$.

(3) If the convex hull $\co (W)$ has a large circumference, then it needs to be located sufficiently far away from
the origin in order for \eqref{perimeters} to hold.
\end{remark}

As a further motivation to this study we re-formulate the perturbation
result \cite[Theorem~3.1]{BLL}. This result shows that the $H_0(z)$-term of the exponential
polynomial $A(z)$ in \eqref{exp-poly2} plays a role in the oscillation theory.

\begin{lettertheorem}[\cite{BLL}]\label{perttu-thm}
Let $A(z)$ be an exponential polynomial of order $n$, and let $f_1,f_2$ be two linearly independent
solutions of \eqref{lde2} with $\max\{\lambda(f_1),\lambda(f_2)\}<n$. Then, for any entire function
$B(z)\not\equiv 0$ of order $\rho(B)<n$, any two linearly independent solutions $g_1,g_2$ of the
differential equation
    $$
    g''+(A(z)+B(z))g=0
    $$
satisfy $\max\{\lambda(g_1),\lambda(g_2)\}\geq n$.
\end{lettertheorem}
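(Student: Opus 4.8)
The plan is to argue by contradiction: suppose that $g''+(A(z)+B(z))g=0$ possesses linearly independent solutions $g_1,g_2$ with $\max\{\lambda(g_1),\lambda(g_2)\}<n$, and deduce that $B\equiv 0$. By Abel's identity the Wronskians $W(f_1,f_2)$ and $W(g_1,g_2)$ are nonzero constants, so after rescaling one may assume both equal $1$; put $E:=f_1f_2$ and $F:=g_1g_2$. A short computation from $f_j''=-Af_j$ and $W(f_1,f_2)=1$ gives the Bank--Laine identity $4AE^2=(E')^2-1-2EE''$, equivalently (using $E''/E=(E'/E)'+(E'/E)^2$)
$$
4A=-\left(\frac{E'}{E}\right)^{2}-2\left(\frac{E'}{E}\right)'-\frac{1}{E^{2}},\qquad
4(A+B)=-\left(\frac{F'}{F}\right)^{2}-2\left(\frac{F'}{F}\right)'-\frac{1}{F^{2}}.
$$
Since linearly independent solutions of \eqref{lde2} share no zeros, $\lambda(E)=\max\{\lambda(f_1),\lambda(f_2)\}<n$ and $\lambda(F)<n$. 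I would write $E=\pi_Ee^{g_E}$, with $\pi_E$ the canonical product over the zeros of $E$ (of order $\lambda(E)<n$) and $g_E$ entire, and first check that $\rho(E)<\infty$: otherwise $g_E$ is transcendental, $E^{-2}=\pi_E^{-2}e^{-2g_E}$ has infinite order, and the first identity forces $\rho(A)=\infty$, a contradiction --- this finiteness is in any case a known feature of \eqref{lde2} when $A$ has finite order and \eqref{lde2} admits a solution base with finitely many zeros. Then $e^{g_E}=E/\pi_E$ is zero-free of finite order, so $g_E$ is a polynomial by Hadamard's theorem; since the first identity gives $\rho(A)\le\rho(E)$ and $-\pi_E^{-2}e^{-2g_E}$ is its only term of order exceeding $\lambda(E)$, one concludes $\deg g_E=n$ and $\rho(E)=n$. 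Similarly $F=\pi_Fe^{g_F}$ with $g_F$ a polynomial of degree $n$ and $\rho(F)=n$.

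Substituting $E'/E=g_E'+\pi_E'/\pi_E$ turns the first identity into $4A=p_E+M_E-\pi_E^{-2}e^{-2g_E}$, where $p_E:=-(g_E')^{2}-2g_E''$ is a polynomial of degree $2n-2$, the remainder $M_E:=-2g_E'(\pi_E'/\pi_E)-(\pi_E'/\pi_E)^{2}-2(\pi_E'/\pi_E)'$ is meromorphic of order $\le\lambda(E)<n$, and $\pi_E^{-2}e^{-2g_E}$ has order exactly $n$; likewise $4(A+B)=p_F+M_F-\pi_F^{-2}e^{-2g_F}$. Subtracting and using $\rho(B)<n$, $\rho(p_F-p_E)<n$, $\rho(M_F-M_E)<n$, the function
$$
D:=\pi_E^{-2}e^{-2g_E}-\pi_F^{-2}e^{-2g_F}
$$
has order $<n$.

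The remaining, and most delicate, step is to turn this into $E=cF$. Let $c_E,c_F\in\C\setminus\{0\}$ be the leading coefficients of $g_E,g_F$. Because $\pi_E^{-2}$ has order $<n$, formula \eqref{indicator-def} shows that $\pi_E^{-2}e^{-2g_E}$ and $\pi_F^{-2}e^{-2g_F}$ have indicators $\theta\mapsto-2\,\Re(c_Ee^{in\theta})$ and $\theta\mapsto-2\,\Re(c_Fe^{in\theta})$. If $c_E\ne c_F$, these differ, so by \eqref{sum} the indicator of $D$ equals their pointwise maximum where they differ, which is strictly positive in directions near one where $\Re(c_Ee^{in\theta})$ is minimal; this contradicts $\rho(D)<n$. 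Hence $c_E=c_F$, $q:=g_E-g_F$ has degree $\le n-1$, and $D=e^{-2g_E}\Psi$ with $\Psi:=\pi_E^{-2}-\pi_F^{-2}e^{2q}$ meromorphic of order $<n$; since $e^{-2g_E}$ has order exactly $n$, necessarily $\Psi\equiv 0$, that is, $e^{q}=\pm\pi_F/\pi_E$. As the left side has neither zeros nor poles, $\pi_E$ and $\pi_F$ have the same zeros, so $\pi_E=\pi_F$ and $e^{q}=\pm 1$, whence $E=e^{q_0}F$ with $e^{2q_0}=1$. Substituting into the two Bank--Laine identities, every term agrees except $-E^{-2}=-e^{-2q_0}F^{-2}=-F^{-2}$, so subtraction gives $4B\equiv 0$, contrary to $B\not\equiv 0$.

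I expect the rigidity step of the last paragraph to be the main obstacle: extracting, from the mere fact that the difference $D$ of the two dominant exponential terms is of small order, the conclusion that $E$ and $F$ coincide up to a constant. It uses the elementary indicator facts of Section~\ref{elementary-sec} --- notably \eqref{sum} and the insensitivity of the order-$n$ indicator to factors of order $<n$ --- and it depends on the preliminary reduction that $g_E$ and $g_F$ are honest polynomials, which in turn hinges on first establishing $\rho(E)=\rho(F)=n$.
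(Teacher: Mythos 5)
The paper does not actually prove this statement: it is a cited background result (Theorem~E, ``\cite[Theorem~3.1]{BLL}''), so there is no internal proof to compare against. Evaluated on its own terms, your argument is correct and proceeds by the method one would expect from \cite{BLL}: form the Bank--Laine products $E=f_1f_2$ and $F=g_1g_2$ with normalized Wronskian, use the identity $4A=-(E'/E)^2-2(E'/E)'-E^{-2}$, factor $E=\pi_E e^{g_E}$ and $F=\pi_F e^{g_F}$ with $g_E,g_F$ polynomials of degree exactly $n$, observe that after subtraction the difference $D=\pi_E^{-2}e^{-2g_E}-\pi_F^{-2}e^{-2g_F}$ has order $<n$, and then show this forces $E=\pm F$, whence $B\equiv 0$. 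The indicator computation for the leading coefficients $c_E,c_F$, and the passage through $\Psi=\pi_E^{-2}-\pi_F^{-2}e^{2q}\equiv 0$ to $\pi_E=\pi_F$ and $e^{2q}=1$, all check out, and they mesh nicely with the tools in Section~\ref{elementary-sec} (equation \eqref{sum}, the $R$-set remarks, and Lemma~\ref{minmod-lemma}).

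Two places are stated more briskly than they deserve. First, the finiteness of $\rho(E)$: your parenthetical ``otherwise $E^{-2}$ has infinite order, so $\rho(A)=\infty$'' does not follow on its own, since $(E'/E)^2$ could in principle also have infinite order and cancel; you should just cite the standard Bank--Laine fact (e.g.\ \cite[Lemma~C]{Bank-Laine} or \cite[Chapter~5]{Laine}) that $\max\{\lambda(f_1),\lambda(f_2)\}<\infty$ and $\rho(A)<\infty$ imply $\rho(E)<\infty$, which is what you in fact fall back on. Second, the step ``since $e^{-2g_E}$ has order exactly $n$, necessarily $\Psi\equiv 0$'' needs one more line: if $\Psi\not\equiv 0$ has order $<n$, then
$T(r,D)\geq T\bigl(r,e^{-2g_E}\bigr)-T(r,\Psi)-O(1)\gtrsim r^n$,
contradicting $\rho(D)<n$; the bare order comparison of the two factors does not by itself rule out a small product. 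With these two points tightened, the proof is complete, and it is consistent in spirit with the closely related internal computation in Lemma~\ref{apu-lemma} of the paper.
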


\begin{remark}
Prior to Theorem~\ref{perttu-thm}, it was shown in \cite[Corollary~1]{BLL0} that if $A(z)$ is an
exponential polynomial of the form \eqref{exp-poly} with non-constant polynomials $Q_j(z)$, then
$\max\{\lambda(f_1),\lambda(f_2)\}=\infty$ for any linearly independent solutions $f_1,f_2$ of \eqref{lde2}.
In other words, for the assumption $\max\{\lambda(f_1),\lambda(f_2)\}<n$ in Theorem~\ref{perttu-thm}
to be possible, at least one of the polynomials $Q_j(z)$ must be constant.
\end{remark}

The situation in the previous remark may happen.
In fact, as a straight forward consequence of \cite[Theorem 5.6]{Laine}, we next state that
\eqref{lde2} may have a zero-free solution base, although this is not very typical:

\begin{corollary}[\cite{Bank-Laine,Laine}]
Let $A(z)$ be an exponential polynomial. Then \eqref{lde2} has a zero-free solution base if and only if
$A(z)$ can be represented in the form
    \begin{equation}\label{special-A}
    -4A(z)=e^{2P(z)}+P'(z)^2-2P''(z),
    \end{equation}
where $P(z)$ is some nonconstant polynomial. In particular, $A(z)$ in \eqref{special-A} has only one
exponential term.
\end{corollary}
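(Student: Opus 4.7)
The plan is to prove the two directions separately, both grounded in the Bank--Laine equation
$$
4AE^2=(E')^2-2EE''-c^2
$$
satisfied by the product $E=f_1f_2$ of any two linearly independent solutions of \eqref{lde2} with Wronskian $c\neq 0$; this identity is essentially the content of \cite[Theorem~5.6]{Laine} and translates zero-freeness of $f_1,f_2$ into a purely analytic condition on $E$.

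For the sufficiency direction, I would check by direct substitution that, given a nonconstant polynomial $P$ with $-4A=e^{2P}+(P')^2-2P''$, the functions
$$
f_{\pm}(z):=\exp\left(\pm\tfrac12\int_0^z e^{P(t)}\,dt-\tfrac12 P(z)\right)
$$
form a zero-free solution base. These $f_{\pm}$ are entire and zero-free because their exponents $G_{\pm}$ are entire, and a short computation with $f_{\pm}''/f_{\pm}=G_{\pm}''+(G_{\pm}')^2$ gives
$$
G_{\pm}''+(G_{\pm}')^2=\tfrac14 e^{2P}+\tfrac14(P')^2-\tfrac12 P''=-A,
$$
which is equivalent to $f_{\pm}''+Af_{\pm}=0$. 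Linear independence follows from $f_+/f_-=\exp\int_0^z e^{P(t)}\,dt$ being transcendental, since $P$ is a nonconstant polynomial.

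For the necessity direction, assume \eqref{lde2} has a zero-free solution base $f_1,f_2$ and set $E=f_1f_2$. Then $E$ is entire and zero-free, so Hadamard factorization yields $E=e^{-P}$ for some entire $P$. Substituting this into the Bank--Laine equation and simplifying produces
$$
-4A=c^2e^{2P}+(P')^2-2P'',
$$
and the rescaling $f_1\mapsto c^{-1}f_1$ shifts $P$ by a constant while absorbing $c^2$ into $e^{2P}$, yielding the stated normalization. It remains to see that $P$ is a nonconstant polynomial: otherwise $e^{2P}$ would be of infinite order, since $\rho(e^g)=\infty$ whenever $g$ is transcendental entire, contradicting $\rho(A)<\infty$ via the rearranged identity $e^{2P}=-4A-(P')^2+2P''$. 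Non-constancy of $P$ reflects non-constancy of $A$. The ``only one exponential term'' assertion is then immediate from the explicit shape of the representation, in which $e^{2P}$ is the sole exponential term.

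The main obstacle lies in verifying that $P$ is a polynomial. The order argument is clean when $P$ is a priori known to have finite order, but a separate remark is needed otherwise: one must observe that $\rho(P)<\infty$ is forced by the structure of the zero-free solution base, e.g.\ by $\rho(E)\leq\rho(A)<\infty$ available through \cite[Theorem~5.6]{Laine}, after which the infinite order of $e^{2P}$ for transcendental $P$ of finite order produces the desired contradiction and closes the argument.
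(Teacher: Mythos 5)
Your argument is correct, and it is in substance the standard Bank--Laine proof that underlies the cited result; the paper itself supplies no proof, declaring the corollary a straightforward consequence of \cite[Theorem~5.6]{Laine} and adding only the observation (which you also make) that the representation \eqref{special-A} forces $A$ to have a single exponential term. The sufficiency computation with $f_\pm=\exp\bigl(\pm\tfrac12\int_0^z e^{P}\,dt-\tfrac12 P\bigr)$ and the necessity argument via $E=f_1f_2=e^{-P}$ and the Bank--Laine identity $4AE^2=(E')^2-2EE''-c^2$ are precisely what one finds behind the cited theorem.

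The one place where the phrasing should be tightened is the finiteness-of-order step. You invoke ``$\rho(E)\leq\rho(A)<\infty$'' as the intermediate fact, but since $E=e^{-P}$ is zero-free entire, the assertion $\rho(E)<\infty$ is already \emph{equivalent} to $P$ being a polynomial, so that inequality is the conclusion in disguise rather than a lemma en route to it. The clean intermediate claim is $\rho(P)\leq\rho(A)<\infty$: with $f_j=e^{g_j}$ and $u_j=g_j'$, the Riccati equation $u_j'+u_j^2=-A$ together with a Clunie-type estimate gives $T(r,u_j)\leq T(r,A)+S(r,u_j)$, hence $\rho(u_j)\leq\rho(A)$, and $P'=-(u_1+u_2)$ inherits this bound. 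Only then does your infinite-order contradiction bite: the rearranged identity $c^2e^{2P}=-4A-(P')^2+2P''$ has a right-hand side of finite order, while $e^{2P}$ has infinite order for any transcendental entire $P$, so $P$ must be a polynomial. Finally, it is worth noting that the corollary tacitly assumes $A$ nonconstant: a nonzero constant $A$ admits the zero-free solution base $e^{\pm\sqrt{-A}\,z}$ but cannot satisfy \eqref{special-A} with $P$ nonconstant.
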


Theorem~\ref{Ishizaki-Tohge} does not address the case where $n=m$ and the points
$0,\zeta_1,\zeta_2$ are collinear such that $\zeta_1,\zeta_2$ are on the opposite sides of the origin.
Without loss of generality, we may assume that $\zeta_1>0$
and $\zeta_2<0$. This divides the complex plane into $2n$ sectors in such a way
that in every sector precisely one of the indicators for $e^{\zeta_1z^n}$ and
$e^{\zeta_2z^n}$ is
positive and the other one is negative. On the boundaries of these sectors both
indicators vanish. Therefore the following result is an immediate consequence
of \cite[Theorem~4.3]{BLL}.

\begin{corollary}\label{opposite-sides-thm}
Suppose that $A(z)=P_1(z)e^{\zeta_1z^n}+P_2(z)e^{\zeta_2z^n}+Q(z)$, where $P_1(z),P_2(z)\not \equiv 0$ and $Q(z)$ are polynomials,
$\zeta_1>0$ and $\zeta_2<0$.
Then every nontrivial solution $f$ of \eqref{lde2} satisfies $\lambda(f)=\infty$.
\end{corollary}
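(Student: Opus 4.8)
The plan is to read off the Phragm\'en--Lindel\"of indicator of $A$ and to verify that the resulting picture matches the hypothesis of \cite[Theorem~4.3]{BLL} exactly. Since $\zeta_1>0$ and $\zeta_2<0$, the exponents $\zeta_1z^n$ and $\zeta_2z^n$ are nonconstant, so $n\geq 1$ and $A(z)$ is a transcendental exponential polynomial of order $n$; in the normalized form \eqref{exp-poly2} it has $H_0=Q$ (a polynomial, possibly $\equiv 0$), $H_1=P_1$, $H_2=P_2$, with the pairwise distinct leading coefficients $\zeta_1,\zeta_2$. Thus $W=\{\zeta_1,\zeta_2\}$, $W_0=\{0,\zeta_1,\zeta_2\}$, and $\co(W_0)$ is the real segment $[\zeta_2,\zeta_1]$, which coincides with $\co(W)$; in particular \eqref{perimeters} fails, so Corollary~\ref{n-thm} is of no use and a finer tool is needed.

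First I would apply the indicator formula recorded just after Theorem~\ref{Stein-thm}, namely $h_A(\theta)=\max_{j}\{\Re(\zeta_je^{in\theta})\}$, where $\zeta_0=0$ is included when $Q\not\equiv 0$ and omitted otherwise. Because $\zeta_1>0>\zeta_2$ are real, the quantities $\Re(\zeta_1e^{in\theta})=\zeta_1\cos(n\theta)$ and $\Re(\zeta_2e^{in\theta})=\zeta_2\cos(n\theta)$ have opposite signs whenever $\cos(n\theta)\neq 0$, so the value $0$ never affects the maximum and in every case
\[
h_A(\theta)=\begin{cases}\zeta_1\cos(n\theta), & \cos(n\theta)\geq 0,\\ -\zeta_2\cos(n\theta), & \cos(n\theta)\leq 0.\end{cases}
\]
Hence $h_A(\theta)\geq 0$ for every $\theta$, and $h_A(\theta)=0$ precisely on the $2n$ rays $\arg z=\frac{(2k+1)\pi}{2n}$, $k=0,\dots,2n-1$. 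These rays partition $\C$ into $2n$ open sectors of opening $\pi/n$, on each of which $h_A$ is strictly positive, with $e^{\zeta_1z^n}$ dominant on $n$ alternate sectors and $e^{\zeta_2z^n}$ dominant on the other $n$ --- precisely the geometric description stated just before the corollary.

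Next I would invoke \cite[Theorem~4.3]{BLL}. By \eqref{f-behavior}, along every ray $\arg z=\theta$ outside the finite exceptional set one has $\log|A(re^{i\theta})|=(h_A(\theta)+o(1))r^n$ with $h_A(\theta)>0$, so $A$ is large in all $2n$ sectors and its indicator vanishes on only finitely many rays; this is exactly the configuration covered by \cite[Theorem~4.3]{BLL}, which then forces $\lambda(f)=\infty$ for every nontrivial solution $f$ of \eqref{lde2}. Observe that $P_1$, $P_2$ and $Q$ enter only through the functions $H_0,H_1,H_2$, which are of order $<n$, and hence do not alter $h_A$; their only relevant effect is that $H_0$ is a polynomial.

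I do not expect a genuine obstacle, as the corollary is advertised as an immediate consequence of \cite[Theorem~4.3]{BLL}; the one point needing care is to confirm that ``$h_A\geq 0$ everywhere with only finitely many zeros on the circle'' does lie within the scope of \cite[Theorem~4.3]{BLL}. This is the borderline case opposite to the $\frac{1}{16}$-theorem: there $h_A$ is nonnegative but vanishes on full sectors and zero-free solutions occur, whereas here the exceptional set has collapsed to isolated rays, which is why one obtains the stronger conclusion $\lambda(f)=\infty$. A secondary technical point is that $h_A$ is continuous and equals the $\max$-formula for \emph{every} $\theta$; this is guaranteed because $A$ is of bounded type by \cite[Satz~1]{Stein}, as already recorded in Section~\ref{elementary-sec}. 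Everything else is the bookkeeping carried out above.
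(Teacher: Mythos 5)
Your proof is correct and follows essentially the same route as the paper: you observe that $h_A$ is nonnegative on all of $[-\pi,\pi)$, vanishing only on the $2n$ rays where $\cos(n\theta)=0$, and then invoke \cite[Theorem~4.3]{BLL}, which is exactly the reasoning the paper gives in the paragraph preceding the corollary (in each of the $2n$ sectors precisely one of the two exponential indicators is positive, both vanishing on the boundary rays). One small sign slip in your displayed formula for $h_A$: when $\cos(n\theta)\leq 0$ the dominant value is $\zeta_2\cos(n\theta)$, which is $\geq 0$ because $\zeta_2<0$, whereas $-\zeta_2\cos(n\theta)$ as you wrote it is $\leq 0$ there and contradicts your (correct) subsequent assertion that $h_A\geq 0$.
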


If $A(z)$ has more than a pair of collinear leading coefficients on the opposite sides of the origin, then
\eqref{lde2} might have zero-free solutions.

\begin{example}
If $A(z)=-e^{-z}-e^{-2z}-e^z-e^{2z}+2$, then $f(z)=\exp\left(e^{-z}+e^z\right)$ solves \eqref{lde2}.
If $A(z)=-e^{-z}-e^{-2z}+4e^z-4e^{2z}-4e^{4z}$, then $f(z)=\exp\left(e^{-z}+e^{2z}\right)$ solves \eqref{lde2}.
Here $Q(z)\equiv 2$ and $Q(z)\equiv 0$, respectively. In both cases $h_A(\pm\pi/2)=0$ and $h_A(\theta)>0$ elsewhere
on $[-\pi,\pi)$.
\end{example}

\section{Estimates of Frank-Hennekemper type}\label{FH-sec}

This section contains estimates for the logarithmic derivative of meromorphic functions, which 
will be used in the oscillation theory later on.

The proof of Theorem~\ref{K-thm-IT} in \cite{Ishizaki-Tohge} is based on the following estimate originally due to Frank and Hennekemper.

\begin{lettertheorem}[\cite{FH}]\label{FH-original}
Let $f$ be a transcendental meromorphic function, and let $k\geq 2$ be an integer. Then
    \begin{equation}\label{FH-estim}
    m\left(r,\frac{f'}{f}\right)\leq 2\overline{N}\left(r,\frac{1}{f}\right)+2\overline{N}\left(r,\frac{1}{f^{(k)}}\right)+S\left(r,\frac{f'}{f}\right).
    \end{equation}
\end{lettertheorem}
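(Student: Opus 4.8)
The plan is to reduce the inequality to a bound for $m(r,f^{(k)}/f)$ via the logarithmic-derivative polynomials, and then to estimate $m(r,f^{(k)}/f)$ by the second main theorem applied to $f^{(k)}/f$.

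First I would put $g=f'/f$ and use the identity $f^{(k)}/f=L_k(g)$, where $L_1(g)=g$ and $L_{j+1}(g)=L_j(g)'+g\,L_j(g)$; by induction $L_k(g)$ is a differential polynomial in $g$ in which $g^k$ is the unique monomial of ordinary degree $k$, every other monomial having degree at most $k-1$. Writing $L_k(g)=g^k+R_k(g)$ and taking proximity functions in $g^k=f^{(k)}/f-R_k(g)$, the lemma on the logarithmic derivative applied to $g$ absorbs each derivative $g^{(j)}$ occurring in $R_k$ up to an $S(r,g)$-term, so $m(r,R_k(g))\le(k-1)m(r,g)+S(r,g)$ and hence $k\,m(r,g)\le m(r,f^{(k)}/f)+(k-1)m(r,g)+S(r,g)$. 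Since $f^{(k)}/f=L_k(f'/f)$ is a differential polynomial of bounded degree in $f'/f$ one has $T(r,f^{(k)}/f)=O(T(r,f'/f))$, so $S(r,f^{(k)}/f)$-terms are $S(r,f'/f)$-terms, and the last inequality reads
$$
m\!\left(r,\frac{f'}{f}\right)\le m\!\left(r,\frac{f^{(k)}}{f}\right)+S\!\left(r,\frac{f'}{f}\right).
$$

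Next I would estimate $m(r,\psi)$ with $\psi:=f^{(k)}/f$; the error term stays of the right size since $S(r,\psi)=S(r,f'/f)$ by the previous step. The key is that the zeros and poles of $\psi$ are controlled, with \emph{reduced} multiplicities, by the zeros of $f^{(k)}$ and the zeros and poles of $f$: every zero of $\psi$ is a zero of $f^{(k)}$ and every pole of $\psi$ is a zero or a pole of $f$, so $\overline{N}(r,1/\psi)\le\overline{N}(r,1/f^{(k)})$ and $\overline{N}(r,\psi)\le\overline{N}(r,1/f)+\overline{N}(r,f)$. (That no multiplicity is lost is seen most transparently from the Wronskian identity $W(f,f^{(k)})=(f^{(k)}/f)'f^2$, which displays $\psi'/\psi=W(f,f^{(k)})/(f f^{(k)})$ as a logarithmic derivative, hence with only simple poles, precisely at the zeros of $f^{(k)}$ and at the zeros and poles of $f$.) Moreover a pole of $f$ of any order is a pole of $\psi$ of order exactly $k$, so $N(r,\psi)\ge k\,\overline{N}(r,f)$. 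Applying the second main theorem in the form $T(r,\psi)\le\overline{N}(r,\psi)+\overline{N}(r,1/\psi)+\overline{N}(r,1/(\psi-1))+S(r,\psi)$ and subtracting $N(r,\psi)$ — the poles of $f$ drop out because $k\ge1$ — I arrive at
$$
m\!\left(r,\frac{f'}{f}\right)\le\overline{N}\!\left(r,\frac1f\right)+\overline{N}\!\left(r,\frac1{f^{(k)}}\right)+\overline{N}\!\left(r,\frac1{f^{(k)}-f}\right)+S\!\left(r,\frac{f'}{f}\right).
$$

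The main obstacle is to get rid of the term $\overline{N}(r,1/(f^{(k)}-f))$: the zeros of $f^{(k)}-f$ — the $1$-points of $\psi$ — are in general unrelated to the zeros of $f$ and $f^{(k)}$, so one application of the second main theorem will not yield the clean statement; indeed it is exactly here that the hypothesis $k\ge2$ must intervene, the inequality being false for $k=1$ (take $f=\exp(e^z)$). Removing this term and, at the same time, reaching the sharp coefficients $2$ (which $f=\exp(e^z)$, $k=2$, shows cannot be lowered) is where the real work lies. I would try to replace the second main theorem for $\psi$ by a Milloux-type estimate — one carrying a subtracted ramification term, such as a $-N_0(r,1/f^{(k+1)})$ contribution from the zeros of $f^{(k+1)}$ — so that the extra $1$-points are absorbed into the zeros of $f$ and $f^{(k)}$ already counted, again using the multiplicity reduction noted above. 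Carrying this bookkeeping through with error $S(r,f'/f)$, rather than the cruder $S(r,f)$ that a direct use of the lemma on the logarithmic derivative would give, is the heart of the argument of \cite{FH}.
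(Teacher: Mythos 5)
The paper does not actually prove Theorem~\ref{FH-original}: it cites the original argument of Frank and Hennekemper \cite{FH} and the simplified version in \cite{FHV}, and the subsequent Lemmas~\ref{FH-lem}--\ref{FH-lem2} treat only the special case in which $f$ solves \eqref{lde2}. So there is no in-paper proof against which to compare your attempt; it has to stand on its own.

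Your first two steps are correct. Writing $g=f'/f$, the identity $f^{(k)}/f=L_k(g)=g^k+R_k(g)$ with $\deg R_k\le k-1$ together with the lemma on the logarithmic derivative gives $m(r,g)\le m\bigl(r,f^{(k)}/f\bigr)+S(r,g)$, and this is a standard reduction. Applying the second main theorem to $\psi=f^{(k)}/f$ with targets $0,1,\infty$, observing that every zero of $\psi$ lies over a zero of $f^{(k)}$, that every pole of $\psi$ lies over a zero or pole of $f$, and that the pole contribution of $f$ is cancelled on subtracting $N(r,\psi)$, yields
$$
m\!\left(r,\frac{f'}{f}\right)\le\overline{N}\!\left(r,\frac1f\right)+\overline{N}\!\left(r,\frac1{f^{(k)}}\right)+\overline{N}\!\left(r,\frac1{f^{(k)}-f}\right)+S\!\left(r,\frac{f'}{f}\right),
$$
exactly as you say. (In fact one can drop the $\overline N(r,1/f)$ term here since $\overline N(r,\psi)-N(r,\psi)\le0$; the real issue is elsewhere.)

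The gap is the last step, and it is a genuine one: you do not actually eliminate the term $\overline{N}\bigl(r,1/(f^{(k)}-f)\bigr)$, which, as you correctly observe, is precisely where the hypothesis $k\ge2$ must enter and where the whole difficulty of the Frank--Hennekemper theorem lies. Invoking ``a Milloux-type estimate with a subtracted ramification term'' is a gesture, not an argument: the $1$-points of $\psi$ are the zeros of $f^{(k)}-f$, and there is no a priori reason they should be absorbed into $\overline N(r,1/f)$ and $\overline N(r,1/f^{(k)})$ by a single Milloux inequality; making that work, with an error that is $S(r,f'/f)$ rather than $S(r,f)$, is exactly the content of \cite{FH} and \cite{FHV}, and it is not reproduced by what you have written. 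As it stands, the proposal proves a weaker inequality containing the parasitic term and then asserts, without carrying it out, that the term can be removed.

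Two smaller remarks. First, the proof of the inequality $m(r,R_k(g))\le(k-1)m(r,g)+S(r,g)$ should say explicitly that each monomial $g^{a_0}(g')^{a_1}\cdots(g^{(j)})^{a_j}$ in $R_k$ satisfies $\sum a_i\le k-1$ and that $m(r,g^{(i)})\le m(r,g)+S(r,g)$; as written, this is implicit. Second, the parenthetical claim that $f=\exp(e^z)$ with $k=2$ shows the coefficients $2$ cannot be lowered is not correct: for this $f$ one has $m(r,f'/f)=m(r,e^z)\sim r/\pi$ while $\overline N(r,1/f)=0$ and $\overline N(r,1/f'')=N\bigl(r,1/(e^z+1)\bigr)\sim r/\pi$, so the inequality would still hold with the coefficient $1$ in place of $2$ on the $\overline N(r,1/f^{(k)})$ term. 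This example demonstrates failure at $k=1$, not sharpness of the constants at $k=2$.
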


The original proof of \eqref{FH-estim}  is somewhat involved. A slightly simplified proof is available in \cite{FHV}. In the particular case when
$f$ is a solution of \eqref{lde2} and $k=2$, the estimate \eqref{FH-estim} reads as
    \begin{equation}\label{reads-as}
     m\left(r,\frac{f'}{f}\right)\leq 2\overline{N}\left(r,\frac{1}{f}\right)
    +2\overline{N}   \left(r,\frac{1}{fA}\right)+S\left(r,\frac{f'}{f}\right).
    \end{equation}

We proceed to study estimates of the above type in the case when $f$ solves \eqref{lde2} by making a direct use of \eqref{lde2}.
This simplifies the proof and leads to slightly different conclusions than above.

\begin{lemma}\label{FH-lem}
Let $f$ be a nontrivial solution of \eqref{lde2}, where $A(z)$ is a transcendental entire function. Then
    \begin{equation}\label{FH-estim2}
    m\left(r,\frac{f'}{f}\right)\leq \overline{N}\left(r,\frac{1}{ff'A}\right)+S\left(r,\frac{f'}{f}\right).
    \end{equation}
\end{lemma}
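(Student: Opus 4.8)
The plan is to start from the differential equation \eqref{lde2} itself, written as $A = -f''/f$, so that the logarithmic derivative $f'/f$ can be expressed in terms of the quantities $f$, $f'$, $f''$ and $A$. First I would note the elementary identity
$$
\frac{f'}{f} = \frac{f'/f'}{f/f'}\cdot\frac{f'}{f}\quad\text{— more usefully —}\quad \frac{f'}{f}=\frac{(f')^2}{f\cdot f'}\cdot\frac{1}{f'/f'}
$$
is circular, so instead the key algebraic step is to write
$$
\frac{f'}{f}=\frac{A}{A}\cdot\frac{f'}{f}=\frac{1}{A}\cdot\frac{f'}{f}\cdot A = \frac{1}{A}\cdot\frac{f'}{f}\cdot\left(-\frac{f''}{f}\right)=-\frac{1}{A}\cdot\frac{f'f''}{f^2}.
$$
Thus $f'/f = -\dfrac{1}{A}\cdot\dfrac{f'}{f}\cdot\dfrac{f''}{f}$, and applying the logarithmic derivative lemma gives
$$
m\left(r,\frac{f'}{f}\right)\le m\left(r,\frac{1}{A}\right)+m\left(r,\frac{f'}{f}\right)+m\left(r,\frac{f''}{f}\right)+S(r),
$$
which is useless because $m(r,f'/f)$ reappears on the right. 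So that naive route fails; the correct idea is to play off zeros instead.

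The right approach is to build a function whose zeros are exactly (the simple zeros of) the candidate poles we want to control, and then invoke Nevanlinna's first main theorem. Since $f$ is entire, $f'/f$ has poles only at zeros of $f$, all simple (and similarly one tracks zeros of $f'$ and of $A$). The plan is: consider the meromorphic function $g = f f' A$. At a zero $z_0$ of $f$ of multiplicity one, $f'(z_0)\ne0$ and, since $A$ is entire with $f'' = -Af$, we have $f''(z_0)=0$; so $z_0$ is a simple zero of $g$. At a zero of $f'$, since $f(z_0)\ne 0$ there (a common zero of $f$ and $f'$ would force $f\equiv 0$), $g$ vanishes. At a zero of $A$, $g$ vanishes. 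The quantity $\overline N(r,1/(ff'A))$ therefore counts, without multiplicity, all zeros of $f$, all zeros of $f'$, and all zeros of $A$. Now I would write $f'/f$ as a quotient of two entire functions built from $f,f',f'',A$ and show its poles are swallowed by the zero set of $ff'A$; concretely, from $f''=-Af$ one gets
$$
\frac{f'}{f}=-\frac{1}{A}\cdot\frac{f''}{f'}\cdot\frac{f'}{f},
$$
and then estimate $m(r,f'/f)$ by combining $m(r,1/A)=O(\log r)$ or $N$-terms via the first main theorem applied to each factor, converting proximity-function contributions into the single counting term $\overline N(r,1/(ff'A))$ via $m(r,1/h)\le N(r,h)+O(1)$ type manipulations (first main theorem on $1/A$, on $f'$, on $f$).

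In practice the cleanest execution is: apply the logarithmic derivative estimate to $f'/f$ and to $f''/f'$ (both are $S(r,f'/f)$-type since $f$ is entire of finite... — actually $A$ transcendental entire forces $f$ of infinite order in general, so one must instead use that $m(r,f'/f)$ and $m(r,f''/f')$ are $S(r,f'/f)$ only after the estimate, not a priori). So I would rather use the identity $A = -f''/f$ directly to get $m(r,1/A)=m(r,f/f'')\le m(r,f/f')+m(r,f'/f'')\le T(r,f/f')+T(r,f'/f'')+O(1)$, then bound each $T$-term by its counting function plus $S$ using first main theorem: $T(r,f/f')=N(r,f/f')+m(r,f/f')$ and $N(r,f/f')=N(r,1/f')+N(r,f')/\cdots$... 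The hard part — and the main obstacle — will be bookkeeping the multiplicities so that everything collapses to the single term $\overline N(r,1/(ff'A))$ with the error genuinely of type $S(r,f'/f)$ rather than $S(r,f)$; this requires carefully using that zeros of $f$ are simple, that $f,f'$ share no zeros, and that $f''$ vanishes at zeros of $f$, together with the first main theorem applied to $1/(ff'A)$ to absorb the proximity contributions of $1/A$. I expect the estimate $m(r,1/A)\le \overline N(r,1/(ff'A)) + S(r,f'/f)$ to be the crux, after which \eqref{FH-estim2} follows by writing $f'/f = (f'/f)\cdot(A/A)$ and one application of the lemma on the logarithmic derivative.
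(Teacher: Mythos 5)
Your proposal does not close; it is a sequence of attempted routes, two of which you yourself notice are circular, and the remaining one rests on two unjustified claims. First, you invoke $m(r,1/A)=O(\log r)$: this fails for a general transcendental entire $A$, so it cannot be used here. Second, the ``crux'' you defer to, namely $m(r,1/A)\le\overline N(r,1/(ff'A))+S(r,f'/f)$ followed by ``one application of the lemma on the logarithmic derivative,'' would not yield \eqref{FH-estim2} even if true: the logarithmic-derivative lemma controls $m(r,f'/f)$ by $S(r,f)$, not by $S(r,f'/f)$, and since $A$ is transcendental the solution $f$ has infinite order, so $S(r,f)$ is genuinely larger than $S(r,f'/f)$ and cannot be absorbed into it.

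The paper's proof is a short Clunie-type argument that your proposal never reaches. Set $g=f'/f$; then $-A=g^2+g'$. Differentiating $\log A$ and rearranging produces the polynomial-in-$g$ identity
\[
\bigl(2g'/g-\alpha\bigr)g^2+\bigl(g''/g-\alpha\,g'/g\bigr)g=0,
\qquad
\alpha:=\frac{A'}{A}=\frac{2gg'+g''}{g^2+g'}.
\]
One checks that neither coefficient vanishes identically (either hypothesis forces $g''/g'=2g'/g$, hence $g$ rational, hence $f$ a polynomial or non-entire, which is impossible). Dividing by $g$ and applying the first main theorem gives
\[
m(r,g)\;\le\; T\!\left(r,\,2\frac{g'}{g}-\frac{A'}{A}\right)+S(r,g)
\;=\;N\!\left(r,\,2\frac{g'}{g}-\frac{A'}{A}\right)+S(r,g),
\]
where one uses that $m(r,g'/g)$, $m(r,g''/g)$ and $m(r,A'/A)$ are all $S(r,g)$ (the last because $T(r,A)\le 2T(r,g)+S(r,g)$). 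The decisive observation you are missing is that $G:=2g'/g-A'/A$ is the logarithmic derivative of $g^2/A=(f')^2/(f^2A)$, so all its poles are simple, and since $f$ is entire these poles occur only at zeros of $ff'A$; hence $N(r,G)\le\overline N(r,1/(ff'A))$, completing the proof. The algebraic identity in $g$ and the identification of $G$ as a logarithmic derivative are exactly the two ingredients your proposal lacks.
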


\begin{proof}
Denote $g=f'/f$ for short. Then $-A(z)=g^2+g'$ by \eqref{lde2}. Define
    $$
    \alpha(z):=\frac{A'(z)}{A(z)}=\frac{2gg'+g''}{g^2+g'}.
    $$
Then we may write
    \begin{equation}\label{poly-eqn}
    \left(2g'/g-\alpha(z)\right)g^2+\left(g''/g-\alpha(z)g'/g\right)g=0.
    \end{equation}
If
    $$
    2g'/g-\alpha(z)=\frac{2(g')^2-gg''}{g(g^2+g')}
    $$
vanishes identically, then $g''/g'=2g'/g$. After integration, this transforms into a Riccati equation
$g'=C_1g^2$ for some constant $C_1\neq 0$. Substituting $g=1/u$ into this equation, we get $u'=-C_1$, and finally
$g(z)=(C_2-C_1z)^{-1}$, where $C_2\in\C$. Now $f$ is either a polynomial or non-entire. Since both cases are impossible, we must have
$2g'/g-\alpha(z)\not\equiv 0$. On the other hand, if $g''/g-\alpha(z)g'/g\equiv 0$, then once again $g''/g'=2g'/g$, which is impossible.
Hence \eqref{poly-eqn} together with the first main theorem yields
     \begin{equation}\label{mrg}
    \begin{split}
    m(r,g) &\leq m\left(r,\frac{1}{2g'/g-\alpha}\right)+S(r,g)\\
    &\leq T\left(r,2\frac{g'}{g}-\alpha\right)+S(r,g)\\
    &= N\left(r,2\frac{g'}{g}-\frac{A'}{A}\right)+S(r,g).
    \end{split}
    \end{equation}
All poles of the function $G=2g'/g-A'/A$ must be simple since $G$ can be expressed as the logarithmic derivative of $g^2/A$.
Since $f$ is entire, all poles of $G$ must occur at the zeros of $ff'A$. The assertion now follows from \eqref{mrg}.
\end{proof}

We take this opportunity to state an alternative formulation of Lemma~\ref{FH-lem}, which is particularly applicable
in the case when $A(z)$ is an exponential polynomial of the form \eqref{exp-poly2}.

\begin{lemma}\label{FH-lem2}
Let $f$ be a nontrivial solution of \eqref{lde2}, where $A(z)=B(z)+C(z)$ is such that $A(z)$ and $B(z)$ are transcendental entire
and $C(z)\not\equiv 0$ is entire. Then
    \begin{equation}\label{FH-estim3}
    m\left(r,\frac{f'}{f}\right)\leq \overline{N}\left(r,\frac{1}{ff'B}\right)+m(r,C)+S(r,C)+S\left(r,\frac{f'}{f}\right),
    \end{equation}
unless $B(z)$ solves $B'-(2g'/g)B=0$ or $B'-(g''/g')B=0$, where $g=f'/f$.
\end{lemma}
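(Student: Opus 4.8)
The plan is to mimic the proof of Lemma~\ref{FH-lem}, but now splitting off the term $C(z)$ from $A(z)=B(z)+C(z)$ and keeping track of the extra contribution $m(r,C)$. First I would again write $g=f'/f$, so that $-A=g^2+g'$ by \eqref{lde2}, i.e.\ $-B(z)=g^2+g'+C(z)$. The goal is to arrive at a polynomial identity in $g$ with coefficients built from $g'/g$, $g''/g$, $C$, and $B'/B$, so that isolating $m(r,g)$ becomes possible via the first main theorem; the role previously played by $\alpha=A'/A$ will now be played by $B'/B$.

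The key computational step is to differentiate the relation $B=-(g^2+g'+C)$ and form
    $$
    B'(z)=-(2gg'+g''+C'(z)).
    $$
Then consider $B'-(2g'/g)B=0$ as the degenerate case to be excluded (paralleling $2g'/g-\alpha\equiv0$ in Lemma~\ref{FH-lem}); when it does not vanish identically, one gets, after substituting the two displayed expressions for $B$ and $B'$, an identity of the shape
    $$
    \bigl(2g'/g-B'/B\bigr)g^2+\bigl(g''/g-(B'/B)g'/g\bigr)g+\bigl(C'-(B'/B)C\bigr)=0,
    $$
a quadratic in $g$ whose constant term now involves $C$ and $C'$ rather than vanishing. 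Solving for $g$ (or rather for $g^2$ after dividing, exactly as in \eqref{mrg}) and applying the first main theorem gives
    $$
    m(r,g)\le N\!\left(r,\,2\frac{g'}{g}-\frac{B'}{B}\right)+m(r,C)+S(r,C)+S(r,g),
    $$
where the term $m(r,C)$ enters because the constant term $C'-(B'/B)C$ has proximity function controlled by $m(r,C)+m(r,C'/C)+m(r,B'/B)=m(r,C)+S(r,C)+S(r,g)$. As before, the poles of $G=2g'/g-B'/B$ are simple (being a logarithmic derivative of $g^2/B$) and occur only at zeros of $ff'B$ since $f$ and $B$ are entire, which yields $N(r,G)\le\overline N(r,1/(ff'B))+S$, and \eqref{FH-estim3} follows.

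The two exceptional cases $B'-(2g'/g)B=0$ and $B'-(g''/g')B=0$ arise exactly as in Lemma~\ref{FH-lem}: the first when the coefficient of $g^2$ vanishes identically, the second when the coefficient of $g$ vanishes identically (in the latter case $g''/g=(B'/B)g'/g$ gives $B'/B=g''/g'$). I expect the main obstacle to be bookkeeping rather than anything deep: one must be careful that after dividing the quadratic identity by $g$ the ``constant'' term really does have small proximity function — in particular that $m(r,C'/C)=S(r,C)$ and $m(r,B'/B)=S(r,B)=S(r,g)$ under the standing hypotheses (here one uses that $C$ is entire of finite order, or more precisely invokes the lemma on the logarithmic derivative), and that $N(r,1/(ff'B))$ correctly absorbs all poles of $G$, including any that could in principle come from zeros of $A$ — but since we never divide by $A$, only by $B$, this does not occur. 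A secondary point worth a sentence is verifying that the degenerate Riccati analysis forcing $g=(C_2-C_1z)^{-1}$ still contradicts $f$ being a nontrivial entire solution, so that the exclusion of $2g'/g-B'/B\equiv0$ can instead be phrased as the exceptional case $B'-(2g'/g)B=0$ in the statement.
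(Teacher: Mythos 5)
Your proposal is correct and matches the paper's argument, which is essentially just ``write the analogue of \eqref{poly-eqn} with the extra constant term $C'-\beta C$, note that the hypothesis excludes the identical vanishing of the $g^2$ and $g$ coefficients, and then run the same first-main-theorem / simple-pole count as in Lemma~\ref{FH-lem}.'' You have in fact supplied more of the bookkeeping than the paper bothers to (the paper stops after the polynomial identity and the exclusion of degenerate coefficients, saying only that the rest follows as before).

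Two very small cautions, neither of which is a gap. First, when you ``solve for $g^2$ after dividing'' the right move is to isolate $g^2=-(a_1g+a_0)/a_2$ and then use $2m(r,g)=m(r,g^2)\le m(r,a_1)+m(r,g)+m(r,a_0)+m(r,1/a_2)+O(1)$; dividing through by $g$ itself would introduce an uncontrolled $m(r,1/g)$ term, so your parenthetical remark is the correct reading. Second, $m(r,C'/C)$ need not literally be $S(r,C)$ when $C$ is a low-degree polynomial (it is $O(\log r)$), but it is $S(r,f'/f)$ since $A$ is transcendental, and both error classes appear in \eqref{FH-estim3}, so the stated bound still stands. Your ``secondary point'' about the Riccati analysis is not actually needed: in Lemma~\ref{FH-lem2} the degenerate cases are simply excluded by hypothesis (and are automatically excluded whenever $f$ has a zero, since $g$ then has a pole and no entire $B$ can satisfy those first-order ODEs), which is exactly why the lemma is stated with the ``unless'' clause rather than proving the coefficients never vanish.
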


\begin{proof}
Let $g=f'/f$. Now $-B(z)=g^2+g'+C(z)$ yields
    $$
    \frac{B'(z)}{B(z)}=\frac{2gg'+g''+C'(z)}{g^2+g'+C(z)},
    $$
which we denote by $\beta(z)$ for short. Then we may write
    \begin{equation}\label{poly-eqn2}
    \left(2g'/g-\beta(z)\right)g^2+\left(g''/g-\beta(z)g'/g\right)g+C'(z)-\beta(z)C(z)=0.
    \end{equation}
If $f$ has a zero, then $g$ has a pole, and no entire function $B$ can solve either of $B'-(2g'/g)B=0$ or $B'-(g''/g')B=0$.
Since these two equations are excluded in any case, the coefficients of $g^2$ and of $g$ in \eqref{poly-eqn2} do
not vanish identically. Hence the rest of the proof of \eqref{FH-estim3} follows that of Lemma~\ref{FH-lem}.
\end{proof}

To analyze Lemma~\ref{FH-lem2}, suppose that $C(z)$ is a small function in the sense that $m(r,C)=S(r,A)$.
This happens, for example, if $A(z)$ is given by \eqref{exp-poly2}, where $H_0(z)=C(z)$. In general, we have
    \begin{equation}\label{Ag}
    T(r,A)=m(r,A)=m\left(r,\left(g+\frac{g'}{g}\right)g\right) \leq 2m(r,g)+S(r,g).
    \end{equation}
Therefore \eqref{FH-estim3} reduces to
    \begin{equation}\label{FH-estim4}
     m\left(r,\frac{f'}{f}\right)\leq \overline{N}\left(r,\frac{1}{ff'B}\right)+S\left(r,\frac{f'}{f}\right).
    \end{equation}
The advantage of this estimate as opposed to \eqref{FH-estim2} is clear when $f$ is zero-free: The functions $f'$ and $B$ may share
the same zeros while $f'$ and $A$ don't.

\begin{example}\label{half-ex}
If $A(z)=-\frac14 (e^{2z}-2e^z+4)$, then $f(z)=\exp\left(\frac12 e^z-z\right)$
is a zero-free solution of \eqref{lde2}. We may choose $B(z)=-\frac14e^z(e^z-2)$ and $C(z)=-1$ for
the representation $A(z)=B(z)+C(z)$. Since $f'=\frac12(e^z-2)f$, we see that $f'$ and $B(z)$ share the same zeros. In particular,
the equality in \eqref{FH-estim4} (as well as in \eqref{FH-estim3}) holds. Since $A(z)=-\frac14(e^z-w)(e^z-\overline{w})$, where $w=1+\sqrt{3}i$,
the zeros of $A(z)$ are different from those of $f'$.
\end{example}

\begin{example}
In the previous example, we have $g=f'/f=\frac12(e^z-2)$. The two special cases $B'/B=2g'/g$ and $B'/B=g''/g'$ appearing in Lemma~\ref{FH-lem2}
can be obtained by choosing $B(z)=-g^2$ and $C(z)=-g'$, or $B(z)=-g'$ and $C(z)=-g^2$, respectively.
\end{example}

\begin{remark}\label{g2-rem}
Suppose that a non-trivial solution $f$ of \eqref{lde2} has no zeros, and that $g=f'/f$ is an exponential polynomial.
Then $g^2+g'$ is also an exponential polynomial, and the convex hull of its leading coefficients matches with that of $g^2$. Therefore
    $$
    2m(r,g)=T(r,g^2)=T(r,g^2+g')+S(r,g)=T(r,A)+S(r,g).
    $$
Since $A(z)$ is entire, the second main theorem \cite[Theorem 2.5]{Hayman} yields
    \begin{equation}\label{g-exppoly}
    m\left(r,\frac{f'}{f}\right)\leq 2^{-1}\overline{N}\left(r,\frac{1}{A-a_1}\right)+2^{-1}\overline{N}\left(r,\frac{1}{A-a_2}\right)+S\left(r,\frac{f'}{f}\right)
    \end{equation}
for any two distinct small target functions $a_1(z),a_2(z)$ of $A(z)$.
\end{remark}

\begin{example}\label{power2-ex}
If $A(z)=-\frac{1}{16}(e^z+1)^2$, then $f(z)=\exp\left(\frac14(e^z-z)\right)$ solves \eqref{lde2}. Choosing
$a_1=0$ and $a_2=-1/16$, we deduce by Theorem~\ref{Stein-thm} that
    \begin{eqnarray*}
    m\left(r,\frac{f'}{f}\right)&=&m(r,(e^z-1)/4)=\frac{r}{\pi}+O(1)\\
    2\overline{N}\left(r,\frac{1}{A}\right)&=&{N}\left(r,\frac{1}{A}\right)=\frac{2r}{\pi}+o(r)\\
    \overline{N}\left(r,\frac{1}{A+1/16}\right)&=&{N}\left(r,\frac{1}{A+1/16}\right)={N}\left(r,\frac{1}{e^z+2}\right)=\frac{r}{\pi}+o(r).
    \end{eqnarray*}
Hence the equality in \eqref{g-exppoly} holds.
\end{example}

\begin{remark}\label{PeQ-rem}
Suppose that $f=Pe^Q$ is a non-trivial solution of \eqref{lde2}, where $P,Q$ are exponential polynomials. We may suppose that
$P$ has at least two terms (in which case $P$ has infinitely many zeros), for otherwise the situation reduces essentially to the one in Remark~\ref{g2-rem}. Now
    $$
    m\left(r,\frac{f'}{f}\right)=m\left(r,\frac{P'}{P}+Q'\right)=m\left(r,Q'\right)+O(\log r).
    $$
If $g=f'/f$, we observe that
    $$
    -A=g^2+g'=(Q')^2+\left(2\frac{P'}{P}+\frac{Q''}{Q'}\right)Q'+\frac{P''}{P}.
    $$
An easy modification of \cite[Theorem~1.12]{YY} shows that
	$$
	m(r,A)=2m(r,Q')+O(\log r),
	$$
and hence
    \begin{equation}\label{g-exppoly2}
    m\left(r,\frac{f'}{f}\right)\leq 2^{-1}m(r,A)+O(\log r).
    \end{equation}
Moreover, \eqref{g-exppoly} holds again.
\end{remark}

\begin{example}\label{rationalnumbers-ex}
For $w_1=-\frac{72}{49}+\frac{6}{49}\sqrt{95}$ and $w_2=-\frac{72}{49}-\frac{6}{49}\sqrt{95}$, let
    $$
    A(z)=-\frac{49}{4}-36e^{-z}-9e^{-2z}=-\frac{49}{4}e^{-2z}(e^z-w_1)(e^z-w_2).
    $$
Then \eqref{lde2} has a solution
$f(z)=(e^z+1)\left(e^z+\frac12\right)\exp\left(3e^{-z}-\frac{11}{2}z\right)$, for which
    $$
    \frac{f'(z)}{f(z)}=-\frac{14e^{2z}+39e^{z}+29+6e^{-z}}{4(e^z+1)\left(e^z+\frac12\right)}.
    $$
Thus \cite[Satz~1]{Stein2} gives us
    $$
    m\left(r,\frac{f'}{f}\right)=\frac{r}{\pi}+o(r).
    $$
We deduce by Theorem~\ref{Stein-thm} that
    \begin{eqnarray*}
    m\left(r,\frac{1}{A}\right) &=& O(\log r)\\
    \overline{N}\left(r,\frac{1}{A}\right)&=&{N}\left(r,\frac{1}{A}\right)=\frac{2r}{\pi}+o(r)\\
    \overline{N}\left(r,\frac{1}{A+49/4}\right)&=&{N}\left(r,\frac{1}{e^{-z}+4}\right)=\frac{r}{\pi}+o(r),
    \end{eqnarray*}
and hence the equality in \eqref{g-exppoly2} holds while \eqref{g-exppoly} is strict.
\end{example}

\begin{example}
If $A(z)=-\frac14e^{-2z}(e^{4z}-4e^{3z}+3e^{2z}-4e^{z}+1)$, then \eqref{lde2} has a solution
$f(z)=(e^{z}+1)\exp\left(-\frac12 e^{-z}-\frac12 e^{z}-\frac{z}{2}\right)$, for which
$\frac{f'(z)}{f(z)}=\frac{e^{-z}-e^{2z}}{2(e^z+1)}$.
Now
	\begin{eqnarray*}
	m\left(r,\frac{f'}{f}\right) &=& \frac{2r}{\pi}+o(r),\\
	\overline{N}\left(r,\frac{1}{A}\right) &=& N\left(r,\frac{1}{A}\right)=m(r,A)=\frac{4r}{\pi}+o(r),
	\end{eqnarray*}
and hence the equality in \eqref{g-exppoly2} holds while \eqref{g-exppoly} is strict for
the choice of $a_1=0$ and $a_2\in\C\setminus\{0\}$.
\end{example}

\section{A variant of Theorem~\ref{K-thm-IT}}

We prove a variant of Theorem~\ref{K-thm-IT} according to which every nontrivial solution of \eqref{lde2}
has either a lot of zeros or a lot of critical points. The proof relies on Frank-Hennekemper type of estimates,
which were discussed in the previous section.
The assertion involves a general transcendental entire coefficient $A(z)$ of finite order.
We will conclude this section with remarks on the case of exponential polynomial
coefficient.

\begin{theorem}\label{K-thm}
Let $A(z)$ be a transcendental entire function of order $\rho(A)$, and let $f$ be a nontrivial solution of \eqref{lde2}.
\begin{itemize}
\item[{\rm (1)}] If $\overline{N}\left(r,\frac{1}{A}\right)=S(r,A)$, then
                \begin{equation}\label{a}
                \limsup_{r\to\infty} \frac{\overline{N}\left(r,\frac{1}{f}\right)}{T\left(r,A\right)}
	          \geq\frac{1}{8}.
                \end{equation}
                In particular, $\overline{\lambda}(f)\geq \rho(A)$.
\item[{\rm (2)}] If $\overline{N}\left(r,\frac{1}{A}\right)\neq S(r,A)$ but \eqref{K} holds for $K>2$, then
                \begin{equation}\label{b}
                \limsup_{r\to\infty} \frac{\overline{N}\left(r,\frac{1}{ff'}\right)}{\overline{N}\left(r,\frac{1}{A}\right)}\geq \frac{K-2}{2}>0.
                \end{equation}
                In particular, $\max\{\overline{\lambda}(f),\overline{\lambda}(f')\}
		   \geq\rho(A)$.
\end{itemize}
\end{theorem}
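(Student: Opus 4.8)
The plan is to combine the Frank--Hennekemper type bounds of Section~\ref{FH-sec} with the elementary inequality \eqref{Ag}, namely $T(r,A)\le 2m(r,f'/f)+S(r,f'/f)$, and then to insert the respective hypotheses on $\overline{N}(r,1/A)$. Write $g=f'/f$ throughout. Since $f$ is a nontrivial solution of \eqref{lde2}, it has only simple zeros and $f,f'$ have no common zeros; hence $g$ has only simple poles, located precisely at the zeros of $f$, so $N(r,g)=\overline{N}(r,1/f)$, and moreover $\overline{N}(r,1/(ff'))=\overline{N}(r,1/f)+\overline{N}(r,1/f')$ because the zero sets of $f$ and $f'$ are disjoint.

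For part (1) I would start from the specialisation \eqref{reads-as} of Theorem~\ref{FH-original}, that is $m(r,g)\le 2\overline{N}(r,1/f)+2\overline{N}(r,1/(fA))+S(r,g)$. The hypothesis $\overline{N}(r,1/A)=S(r,A)$ together with the union bound gives $\overline{N}(r,1/(fA))\le\overline{N}(r,1/f)+S(r,A)$, so $m(r,g)\le 4\overline{N}(r,1/f)+S(r,A)+S(r,g)$; feeding this into \eqref{Ag} yields $T(r,A)\le 8\overline{N}(r,1/f)+2S(r,A)+3S(r,g)$, which is \eqref{a} once the error terms are controlled. The assertion $\overline{\lambda}(f)\ge\rho(A)$ then follows because $T(r,A)$ has exact order $\rho(A)$ and \eqref{a} forces $\overline{N}(r,1/f)$ to be bounded below by a fixed positive multiple of $T(r,A)$ off an exceptional set.

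For part (2) I would instead use Lemma~\ref{FH-lem}, which gives $m(r,g)\le\overline{N}(r,1/(ff'A))+S(r,g)\le\overline{N}(r,1/(ff'))+\overline{N}(r,1/A)+S(r,g)$; combined with \eqref{Ag} this reads $T(r,A)\le 2\overline{N}(r,1/(ff'))+2\overline{N}(r,1/A)+3S(r,g)$. Inserting \eqref{K} with $K>2$ produces $(K-2)\overline{N}(r,1/A)\le 2\overline{N}(r,1/(ff'))+S(r,A)+3S(r,g)$. Since $\overline{N}(r,1/A)\ne S(r,A)$ there is a sequence $r_j\to\infty$, which may be chosen outside the exceptional set, along which $\overline{N}(r_j,1/A)$ is of the same order as $T(r_j,A)$; dividing by $\overline{N}(r_j,1/A)$ gives \eqref{b}, and taking $r_j$ so that in addition $T(r_j,A)\gtrsim r_j^{\rho(A)-\veps}$ yields $\max\{\overline{\lambda}(f),\overline{\lambda}(f')\}\ge\rho(A)$ via $\overline{N}(r,1/(ff'))=\overline{N}(r,1/f)+\overline{N}(r,1/f')$.

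The part that needs care is the error term $S(r,g)=S(r,f'/f)$: a priori it is only $o(T(r,f'/f))$, and if $f$ has infinite order this might exceed $T(r,A)$. I would deal with this by a bootstrap argument. Arguing by contradiction, suppose the claimed lower bound in \eqref{a} (resp.\ \eqref{b}) fails; then $\overline{N}(r,1/f)$ (resp.\ $\overline{N}(r,1/(ff'))$, hence also $\overline{N}(r,1/f)$) is $O(T(r,A))$. Now $g^2=-A-g'$ and the lemma on the logarithmic derivative give $m(r,g)\le T(r,A)+S(r,g)$, so $T(r,g)=m(r,g)+\overline{N}(r,1/f)=O(T(r,A))$; therefore $S(r,g)=o(T(r,A))$ off an exceptional set of finite measure, and the displayed inequalities then contradict the assumed failure. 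I expect this bookkeeping, rather than the algebraic manipulations, to be the delicate point.
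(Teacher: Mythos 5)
Your derivation of the displayed inequalities \eqref{a} and \eqref{b} is essentially the same as the paper's. The paper, too, feeds the Frank--Hennekemper bound \eqref{reads-as} (for Part~(1)) resp.\ Lemma~\ref{FH-lem} (for Part~(2)) together with \eqref{Ag} into the hypothesis, and runs it as a contradiction argument; your version is the same chain of inequalities written out directly. Your bootstrap for controlling $S(r,g)$ is also sound: under the contrary assumption one obtains $T(r,g)=O(T(r,A))$ off a finite-measure set, hence $S(r,g)=o(T(r,A))$ there, which is exactly what makes the error terms harmless.

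The gap is in how you deduce the ``in particular'' order bounds. You assert that \eqref{a} ``forces $\overline{N}(r,1/f)$ to be bounded below by a fixed positive multiple of $T(r,A)$ off an exceptional set''; but \eqref{a} is only a $\limsup$, so it guarantees a lower bound along \emph{some} sequence $r_n\to\infty$, and on that sequence $T(r_n,A)$ may be far below the order of $A$. (A $\limsup$ of a ratio being positive is compatible with the numerator having strictly smaller order than the denominator.) In Part~(2) the same issue arises: you need a sequence $r_j$ along which \emph{simultaneously} $\overline{N}(r_j,1/A)$ is comparable to $T(r_j,A)$ \emph{and} $T(r_j,A)\gtrsim r_j^{\rho(A)-\veps}$, and these two requirements may select disjoint sequences, since $\overline{N}(r,\frac1A)\neq S(r,A)$ only produces the former on some sequence.

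The paper avoids this by proving the order bounds with a \emph{separate} contradiction argument via Lemma~\ref{comparable-growth-lem}: assuming $\overline{\lambda}(f)<\rho(A)$ (resp.\ $\max\{\overline{\lambda}(f),\overline{\lambda}(f')\}<\rho(A)$) forces $\overline{N}(r,\frac1f)=o(T(r,A))$ (resp.\ $\overline{N}(r,\frac1{ff'})=o(T(r,A))$) on a set of upper density~1, which is robust against removing a set of finite measure. Feeding this back into \eqref{4g}/\eqref{g} together with \eqref{Ag} gives $T(r,g)=o(T(r,g))$ on a set of positive density, which is absurd. Note that this also makes your bootstrap cleaner, since the contrary assumption supplies the hypothesis $\overline{N}(r,\frac1f)=O(T(r,A))$ directly on a density-one set. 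You should insert this comparison lemma (or an equivalent statement about a growth-order comparison producing an $o(\cdot)$ relation on a density-one set) rather than trying to read the order bound off the $\limsup$ inequalities \eqref{a}, \eqref{b}.
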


For the proof we need the next lemma,
which is elementary but may be of independent interest in some other contexts also. Recall first that
the upper (linear) density of a set $F\subset (0,\infty)$ is defined by
	$$
	\overline{\operatorname{dens}}(F)=\limsup_{r\to\infty}\frac{1}{r}\int_{F\cap [0,r]}dt.
	$$

\begin{lemma}\label{comparable-growth-lem}
Let $f$ and $g$ be entire functions. If $\rho(f)<\rho(g)$, then there exists a set $F\subset (0,\infty)$ such
that $\overline{\operatorname{dens}}(F)=1$ and
    \begin{equation}\label{F}
    T(r,f)=o(T(r,g)),\quad r\in F.
    \end{equation}
Analogous assertions $N(r,1/f)=o(N(r,1/g))$ and $N(r,1/f)=o(T(r,g))$ hold for $r\in F$ in the
cases $\lambda(f)<\lambda(g)$ and $\lambda(f)<\rho(g)$, respectively.
\end{lemma}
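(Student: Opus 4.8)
The plan is to extract one elementary fact about non-decreasing functions and then apply it three times (the set $F$ may be chosen separately in each of the three situations, which is all that is needed). I would first prove the following sublemma: \emph{if $u\colon[1,\infty)\to(0,\infty)$ is non-decreasing and $\limsup_{r\to\infty}\frac{\log^+u(r)}{\log r}=\sigma$, then for every $\beta<\sigma$ the set $F:=\{r\ge 1:u(r)\ge r^{\beta}\}$ satisfies $\overline{\operatorname{dens}}(F)=1$.} The subtlety is that $\sigma$ is a \emph{limsup}, so the rapid growth of $u$ is a priori witnessed only along a sparse sequence $s_j\to\infty$ with $u(s_j)\ge s_j^{\gamma}$, where $\beta<\gamma<\sigma$ (take $\gamma=\beta+1$ if $\sigma=\infty$). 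Monotonicity lets one spread this out: for every $r\in[s_j,s_j^{\gamma/\beta}]$ one has $u(r)\ge u(s_j)\ge s_j^{\gamma}\ge r^{\beta}$, so the whole interval $[s_j,s_j^{\gamma/\beta}]$ lies in $F$. Since $\gamma/\beta>1$, this interval has length comparable to its right endpoint $s_j^{\gamma/\beta}$, and evaluating the density ratio at $r=s_j^{\gamma/\beta}$ gives $\frac{|F\cap[0,r]|}{r}\ge 1-r^{\beta/\gamma-1}\to 1$; hence $\overline{\operatorname{dens}}(F)=1$.

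Each of the three assertions then follows from this sublemma by the same short computation. For \eqref{F}: choose $\alpha,\beta$ with $\rho(f)<\alpha<\beta<\rho(g)$ (possible because the inequality is strict; take $\beta=\alpha+1$ when $\rho(g)=\infty$). By the definition of order, $T(r,f)\le r^{\alpha}$ for all large $r$, and the sublemma applied to $u(r)=T(r,g)$, $\sigma=\rho(g)$, furnishes a set $F$ of upper density $1$ on which $T(r,g)\ge r^{\beta}$; therefore $T(r,f)/T(r,g)\le r^{\alpha-\beta}\to 0$ as $r\to\infty$ along $F$. For $N(r,1/f)=o(N(r,1/g))$ under $\lambda(f)<\lambda(g)$, I would use the standard identity $\lambda(f)=\limsup_{r\to\infty}\frac{\log^+N(r,1/f)}{\log r}$ together with the fact that $N(r,1/g)$ is non-decreasing for large $r$, then run the same argument with $\lambda(f)<\alpha<\beta<\lambda(g)$, $u(r)=N(r,1/g)$, and $N(r,1/f)$ in place of $T(r,f)$. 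For $N(r,1/f)=o(T(r,g))$ under $\lambda(f)<\rho(g)$, take $\lambda(f)<\alpha<\beta<\rho(g)$, $u(r)=T(r,g)$, and $N(r,1/f)$ in place of $T(r,f)$.

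The only real work is the sublemma. A naive attempt that only inspects $[0,r_j]\cap\{u(r)<r^{\beta}\}$ shows merely that this ``bad'' set lacks full density, which is too weak; one genuinely has to exploit monotonicity to turn a single large value of $u$ into an interval of comparable length on which $u(r)\ge r^{\beta}$, and it is precisely that length-comparability that upgrades the conclusion from ``positive density'' to ``upper density $1$''. It is also worth keeping in mind that $\underline{\operatorname{dens}}(F)$ can be $0$, so no stronger density statement is available. Everything after the sublemma is routine bookkeeping with the definitions of $\rho$ and $\lambda$ and the elementary relation between $\lambda(f)$ and $N(r,1/f)$.
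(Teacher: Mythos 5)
Your proposal is correct and is in substance the same proof as the paper's: both locate a sparse sequence along which $g$ (or its counting function) grows like $r^{\gamma}$, use monotonicity to spread that growth over intervals $[s_j,\,s_j^{c}]$ with a fixed exponent $c>1$, and compute the upper density by evaluating at the right endpoints. Your sublemma simply packages this mechanism for a general non-decreasing $u$, which is a tidy but not essentially different organization of the same calculation.
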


\begin{proof}
Choose $\veps\in \left(0,\frac{\rho(g)-\rho(f)}{3}\right)$ if $\rho(g)<\infty$, or $\veps\in (0,1)$ if $\rho(g)=\infty$.
Then choose $\delta\in \left(0,\min\{\veps/2\rho(f),1/2\}\right)$. By the assumptions, there exists a sequence $\{r_n\}$
of positive numbers tending to infinity such that $T(r_n,g)\geq r_n^{\rho(f)+3\veps}$. Set $s_n=r_n^{1+\delta}$. Then
for $r_n\leq r\leq s_n$ and $n$ large enough,
    \begin{eqnarray*}
    T(r,f)&\leq& r^{\rho(f)+\veps}\leq s_n^{\rho(f)+\veps}=r_n^{(1+\delta)(\rho(f)+\veps)}\\
    &\leq& r_n^{\rho(f)+2\veps}= o(T(r_n,g))=o(T(r,g)).
    \end{eqnarray*}
It is easy to see that the set $F=\cup_n[r_n,s_n]$ satisfies $\overline{\operatorname{dens}}(F)=1$,
so that \eqref{F} is proved. The analogous assertions are proved similarly.
\end{proof}

\bigskip
\noindent
\emph{Proof of Theorem~\ref{K-thm}.}
(1)  Let $f$ be a non-trivial solution of \eqref{lde2}, where the coefficient $A(z)$ satisfies
$\overline{N}\left(r,\frac{1}{A}\right)= S(r,A)$, and suppose on the contrary to
the assertion \eqref{a} that
	\begin{equation}\label{contrary-a}
       \limsup_{r\to\infty} \frac{\overline{N}\left(r,\frac{1}{f}\right)}{T\left(r,A\right)}
	 <\frac{1}{8}.
       \end{equation}
Denoting $g=f'/f$, we deduce from \eqref{reads-as} and \eqref{Ag} that
	\begin{equation}\label{4g}
	m(r,g)\leq 4\overline{N}\left(r,\frac{1}{f}\right)+S(r,g).
	\end{equation}
Using \eqref{contrary-a} and \eqref{Ag} in \eqref{4g} results in a contradiction.

Suppose on the contrary to the remaining assertion in Part (1) that
    $
    \overline{\lambda}(f)<\rho(A).
    $
Then Lemma~\ref{comparable-growth-lem} and \eqref{Ag} yield
    $$
    \overline{N}\left(r,\frac{1}{f}\right)=o(T(r,A))=o(T(r,g)),\quad r\in F,
    $$
where $\overline{\operatorname{dens}}(F)=1$. Again \eqref{4g} results in a contradiction.

(2) Let $f$ be a non-trivial solution of \eqref{lde2}, where the coefficient $A(z)$ satisfies
$\overline{N}\left(r,\frac{1}{A}\right)\neq S(r,A)$, yet \eqref{K} holds for $K>2$. Suppose on the contrary to the assertion \eqref{b} that
	\begin{equation}\label{contrary-b}
    	\limsup_{r\to\infty} \frac{\overline{N}\left(r,\frac{1}{ff'}\right)}{\overline{N}
	\left(r,\frac{1}{A}\right)}<\frac{K-2}{2}.
    	\end{equation}
We conclude by Lemma~\ref{FH-lem} and \eqref{contrary-b} that
    	\begin{equation}\label{g}
	\begin{split}
    	m(r,g) &\leq \overline{N}\left(r,\frac{1}{ff'}\right)
	+\overline{N}\left(r,\frac{1}{A}\right)+S(r,g)\\
	&\leq(M+1)\overline{N}\left(r,\frac{1}{A}\right)+S(r,g),\quad r\geq R,
    	\end{split}
	\end{equation}
where $M<(K-2)/2$ and $R>0$ are constants. Using \eqref{K} and \eqref{Ag} in \eqref{g}
results in a contradiction.
The remaining assertion $\max\{\overline{\lambda}(f),\overline{\lambda}(f')\}
\geq\rho(A)$ in Part (2) follows by Lemma~\ref{comparable-growth-lem}. \hfill$\Box$

\begin{remark}
If $A(z)$ is an exponential polynomial of the normalized form \eqref{exp-poly2}, then
the condition $\overline{N}\left(r,\frac{1}{A}\right)=S(r,A)$ in Theorem~\ref{K-thm}(1)
is possible only in the special case when $m=1$ and $H_0(z)\equiv 0$. Moreover,
\eqref{K} holds if $C(\co(W_0))>KC(\co (W))$ and $H_0(z)\equiv 0$. However,
examples in Section~\ref{FH-sec} show that  $\max\{\overline{\lambda}(f),\overline{\lambda}(f')\}\geq\rho(A)$ may hold if $A(z)$ is an exponential polynomial
which does not satisfy these requirements. In such cases the use of \eqref{FH-estim4}
instead of \eqref{FH-estim3} may result in better estimates, as in Example~\ref{half-ex}.
\end{remark}

\section{A generalization of Theorem~\ref{Ishizaki-Tohge}(c)}

The next result is a slight generalization of Part (c) in Theorem~\ref{Ishizaki-Tohge}.

\begin{theorem}\label{n-thm-c}
Let $A(z)$ be an exponential polynomial of the normalized form
    \begin{equation}\label{A0}
    A(z)=H_0(z)+H_1(z)e^{\zeta_1z^n}+\cdots +H_m(z)e^{\zeta_mz^n},\quad m\geq 2,
    \end{equation}
where the functions $H_j(z)$ are either exponential polynomials of order $<n$ or ordinary polynomials in $z$.
Denote $B(z)=A(z)-H_1(z)e^{\zeta_1z^n}$. Suppose that $h_A(\theta)>2h_B(\theta)$ whenever $h_B(\theta)>0$, and
that $C(\co(W_0^A))>2C(\co(W_0^B))$.
Then $\lambda(f)\geq n$ for any nontrivial solution $f$ of \eqref{lde2}.
\end{theorem}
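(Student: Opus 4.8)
The plan is to argue by contradiction, assuming that $f$ is a nontrivial solution of \eqref{lde2} with $\lambda(f)<n$. First I would record two normalizations. Since a zero of order $k\ge 2$ of $f$ would, through $g^{2}+g'=-A$ with $g=f'/f$, force a pole of order two of the entire function $-A$ unless $k^{2}-k=0$, every solution of \eqref{lde2} has only simple zeros; hence $\overline N(r,1/f)=N(r,1/f)=o(r^{n})$, and $f'$ is nonzero at the zeros of $f$. Next I would unwind the hypotheses geometrically: with $B(z)=A(z)-H_{1}(z)e^{\zeta_{1}z^{n}}$ one has $h_{A}(\theta)=\max\{h_{B}(\theta),\Re(\zeta_{1}e^{in\theta})\}$, and the requirement $h_{A}>2h_{B}$ on $\{h_{B}>0\}$ forces $\{h_{A}>0\}=\{\Re(\zeta_{1}e^{in\theta})>0\}$, on which $h_{A}$ coincides with the indicator of the single term $H_{1}e^{\zeta_{1}z^{n}}$. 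Integrating $h_{A}^{+}$ and using that the perimeter of a convex hull containing the origin equals $\int_{-\pi}^{\pi}$ of its support function pins $\co(W_{0}^{A})$ down to the segment $[0,\overline{\zeta_{1}}]$; thus all $\zeta_{j}$ are positive real multiples of $\zeta_{1}$, $C(\co(W_{0}^{A}))=2|\zeta_{1}|$, and hypothesis $C(\co(W_{0}^{A}))>2C(\co(W_{0}^{B}))$ becomes $|\zeta_{j}|<\tfrac12|\zeta_{1}|$ for every $j\ge 2$. I expect this reduction to a ``collinear, one half–line'' configuration — essentially the hypothesis of Theorem~\ref{Ishizaki-Tohge}(c) — to be the conceptual core of the argument.

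With this in hand I would play a lower and an upper bound for $m(r,f'/f)$ against each other. From \eqref{Ag} and Theorem~\ref{Stein-thm},
$$
m\!\left(r,\tfrac{f'}{f}\right)\ \ge\ \tfrac12 T(r,A)+S\!\left(r,\tfrac{f'}{f}\right)\ =\ \tfrac12\,C(\co(W_{0}^{A}))\tfrac{r^{n}}{2\pi}+o(r^{n}).
$$
For the upper bound I would feed the splitting $A=H_{1}e^{\zeta_{1}z^{n}}+B$ into Lemma~\ref{FH-lem2}, with $H_{1}e^{\zeta_{1}z^{n}}$ in the rôle of ``$B$'' and $B$ in the rôle of ``$C$''. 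The two exceptional equations $(H_{1}e^{\zeta_{1}z^{n}})'=(2g'/g)H_{1}e^{\zeta_{1}z^{n}}$ and $(H_{1}e^{\zeta_{1}z^{n}})'=(g''/g')H_{1}e^{\zeta_{1}z^{n}}$ must be discarded first: if $f$ has a zero then $g$ has a pole and neither can hold, and if $f$ is zero–free, comparing indicators in $g^{2}+g'=-A$ shows each would force either $h_{A}(\theta)\le 2h_{B}(\theta)$ at some $\theta$ with $h_{B}(\theta)>0$ or $C(\co(W_{0}^{A}))\le 2C(\co(W_{0}^{B}))$, against the hypotheses. Lemma~\ref{FH-lem2} then gives
$$
m\!\left(r,\tfrac{f'}{f}\right)\ \le\ \overline N\!\left(r,\tfrac{1}{ff'H_{1}e^{\zeta_{1}z^{n}}}\right)+m(r,B)+S(r,B)+S\!\left(r,\tfrac{f'}{f}\right),
$$
where $m(r,B)=C(\co(W_{0}^{B}))\tfrac{r^{n}}{2\pi}+o(r^{n})$ by Theorem~\ref{Stein-thm}, while $\overline N(r,1/(ff'H_{1}e^{\zeta_{1}z^{n}}))\le \overline N(r,1/f)+\overline N(r,1/f')+N(r,1/H_{1})=\overline N(r,1/f')+o(r^{n})$, using $\rho(H_{1})<n$ and that the zeros of $f$ are not zeros of $f'$.

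The remaining step — controlling the critical points of $f$ — is the one I expect to be the main obstacle. Combining the two displays with hypothesis (ii) and the crude bound for $\overline N(r,1/f')$ only yields that $\overline N(r,1/f')$ grows at least like a positive multiple of $r^{n}$, which by itself is not a contradiction: in Example~\ref{half-ex} $f$ is zero–free yet $\lambda(f')=n$. The point to exploit is that once $f$ is essentially zero–free, $g=f'/f$ is close to an exponential polynomial, so the sharper estimate \eqref{FH-estim4} is available (the $H_{0}$–term being small), and one should show — using \eqref{g-exppoly} of Remark~\ref{g2-rem} with suitable small targets of $A$, whose $\overline N$–counts for the collinear configuration are governed by the shorter perimeter $C(\co(W^{A}))=2\bigl(1-\min_{j\ge 2}|\zeta_{j}|/|\zeta_{1}|\bigr)|\zeta_{1}|$, or by analysing $h_{g}$ directly on the sectors $\{h_{A}\le 0\}$ via the Riccati relation — that in fact
$$
m\!\left(r,\tfrac{f'}{f}\right)\ \le\ C(\co(W_{0}^{B}))\tfrac{r^{n}}{2\pi}+o(r^{n}),
$$
the strict inequalities in (i) and (ii) being exactly what keeps this estimate from being lossy. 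Comparing with the lower bound then forces $\tfrac12 C(\co(W_{0}^{A}))\le C(\co(W_{0}^{B}))+o(1)$, contradicting hypothesis (ii) and completing the proof. The delicate bookkeeping around the zeros of $f'$ — equivalently, proving that the last estimate is not wasteful — is where I anticipate the real work to lie.
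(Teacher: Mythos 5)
Your geometric reduction is correct and worth noting: the hypothesis $h_A(\theta)>2h_B(\theta)$ on $\{h_B>0\}$ does force each $\zeta_j$ with $j\ge 2$ to lie on the ray $(0,\zeta_1)$ with $|\zeta_j|<|\zeta_1|/2$, and the circumference hypothesis is then subsumed. Your lower bound $m(r,f'/f)\ge\tfrac12 T(r,A)+S(r,f'/f)$ from \eqref{Ag} and the application of Lemma~\ref{FH-lem2} with $T=H_1e^{\zeta_1z^n}$ and $C=B$ are also sound. However, you have correctly diagnosed, and not cured, the essential difficulty: the resulting inequality only shows that $\overline N(r,1/f')$ grows like a positive multiple of $r^n$, which is no contradiction. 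Your two suggestions for closing the gap do not work as stated. Estimate \eqref{FH-estim4} requires the perturbation $C$ of Lemma~\ref{FH-lem2} to satisfy $m(r,C)=S(r,A)$, which for your choice $C=B$ fails, since $m(r,B)\asymp C(\co(W_0^B))r^n$; and Remark~\ref{g2-rem} is only available once one already knows that $g=f'/f$ is an exponential polynomial, which is precisely what one does not know at this stage (and is generally false, as $f$ may have finitely or infinitely many but sparse zeros). Thus the proposal contains a genuine gap at the decisive step, and the approach through Frank--Hennekemper estimates alone does not close it.

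The paper's proof is structurally different. Writing $f=\pi e^g$, it differentiates the Riccati identity \eqref{g-prime} and subtracts $R=(H_1e^{\zeta_1z^n})'/(H_1e^{\zeta_1z^n})$ times \eqref{g-prime} to eliminate the dominant exponential term, arriving at a linear relation $F_1g'=F_2$ whose coefficients involve only $B$ (through the exponential polynomial $C$ of \eqref{C2}) and quantities of order $O(r^\alpha)$ with $\alpha<n$. The indicator hypothesis $h_A>2h_B$ on $\{h_B>0\}$, together with Lemma~\ref{apu-lemma}(2), enters precisely to control $U=g''-Rg'/2$ sectorially and yields $T(r,U)=O(r^\alpha)$ via Phragm\'en--Lindel\"of; the circumference hypothesis then forces $F_1\equiv F_2\equiv 0$ and a final contradiction $T(r,C)=O(r^\alpha)$. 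No counting of critical points $\overline N(r,1/f')$ appears anywhere, which is exactly what sidesteps the obstacle you identified. If you want to pursue your route, you would need a replacement for the elimination step that converts the knowledge $h_A(\theta)=2h_{g'}(\theta)$ on $\{h_A>0\}$ (from Lemma~\ref{apu-lemma}(2)) into an upper bound on $T(r,g')$ in terms of $T(r,B)$; the paper achieves this by differentiating the equation, not by counting zeros of $f'$.
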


We discuss the necessity of the assumptions in Theorem~\ref{n-thm-c} as follows.

\begin{example}\label{8}
Let $f(z)=\exp\left(e^{z^n}\right)$, and denote $C(z)=f'(z)/f(z)=nz^{n-1}e^{z^n}$. Then $f$ is a zero-free solution
of \eqref{lde2}, where
    \begin{eqnarray*}
    A(z)&=&-C'(z)-C(z)^2\\
    &=&-\left(n(n-1)z^{n-2}+n^2z^{2(n-1)}\right)e^{z^n}-n^2z^{2(n-1)}e^{2z^n}.
    \end{eqnarray*}
Defining $B(z)=A(z)+C(z)^2=-C'(z)$, we have $h_A(\theta)=2h_B(\theta)$ whenever $h_B(\theta)>0$,
and $C(\co(W_0^A))=2C(\co(W_0^B))=4$.
\end{example}

The proof of Theorem~\ref{n-thm-c} relies partially on the ideas used in \cite{Ishizaki, Ishizaki-Tohge},
but is also based on Steinmetz' treatment of exponential polynomials as well as on the following lemma.

\begin{lemma}\label{apu-lemma}
Let $A(z)$ be an exponential polynomial of the form \eqref{A0}.
Let $f=\pi e^g$ be a solution of \eqref{lde2} such that $\lambda(f)=\rho(\pi)<\infty$.
Then $\rho(A)=\rho(g)$, and the following assertions hold.\\[-20pt]
\begin{itemize}
\item[{\rm (1)}] $T(r,A)\leq 2T(r,g')+O(\log r)$ and $T(r,g')\leq T(r,A)+O(\log r)$. \\[-20pt]
\item[{\rm (2)}] If $h_A(\theta)>0$, then $h_A(\theta)=2h_{g'}(\theta)$.\\[-20pt]
\item[{\rm (3)}] If $h_A(\theta)\leq 0$, then $\log^+ |g'(re^{i\theta})|\leq O\left(r^{n-1}\right)+O(\log r)$
                 as $r\to\infty$ for almost every such $\theta$. Conversely, if $h_{g'}(\theta)\leq 0$, then
                 $\log^+ |A(re^{i\theta})|\leq o\left(r^{n}\right)$ as $r\to\infty$ for almost every such $\theta$.
\end{itemize}
\end{lemma}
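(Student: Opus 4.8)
The plan is to exploit the hypothesis $f = \pi e^g$ with $\lambda(f) = \rho(\pi) < \infty$ together with the differential equation \eqref{lde2}. Writing $C(z) := f'/f = \pi'/\pi + g'$, equation \eqref{lde2} gives $-A = C^2 + C'$, and since $\pi$ has order $\rho(\pi) = \lambda(f) < \rho(A)$ (which we must also confirm is $< \infty$), the logarithmic derivative $\pi'/\pi$ is a small function: $m(r,\pi'/\pi) = O(\log r)$ if $\pi$ has finitely many zeros, but in general $T(r,\pi'/\pi) = O(T(r,\pi)) + S(r,\pi)$, which by Lemma~\ref{comparable-growth-lem} is $o(T(r,A))$ off a small set — here I will need to be a little careful, since $A$ is an exponential polynomial and the estimate $m(r,\pi'/\pi)$ being genuinely $O(\log r)$ requires $\pi$ to have few zeros; the cleaner route is to observe that $\rho(\pi) = \lambda(f)$ by hypothesis and then use that $\pi$ is the canonical product over the zeros of $f$, so $m(r,\pi'/\pi) = O(\log r)$ by the lemma on logarithmic derivatives of canonical products. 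From $-A = (\pi'/\pi + g')^2 + (\pi'/\pi + g')' = (g')^2 + (\text{small}) g' + (\text{small})$, the first main theorem and the fact that $g'$ appears quadratically will give $T(r,A) \le 2T(r,g') + O(\log r)$ and, conversely, $m(r,g') \le \tfrac12 m(r,A) + O(\log r)$ modulo poles — but $g$ is entire so $g'$ has no poles, yielding $T(r,g') \le T(r,A) + O(\log r)$. This proves (1), and the two inequalities together force $\rho(g') = \rho(A)$, hence $\rho(g) = \rho(A)$, which in particular is finite.

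**For part (2)** I would work with the Phragmén–Lindelöf indicator. From $-A = (g')^2 + (\pi'/\pi + g')' + 2(\pi'/\pi)g' + (\pi'/\pi)^2$, note every term other than $(g')^2$ has order $< \rho(A) = \rho(g')$ in an indicator sense: $h_{\pi'/\pi} \equiv 0$ (as $\pi'/\pi$ has order $< \rho(g')$, or at any rate contributes nothing to the $r^n$-scale), and $h_{g''} \le h_{g'}$ by \eqref{derivative-indicator}. Along a ray $\theta$ with $h_A(\theta) > 0$, we use \eqref{f-behavior}-type behaviour: for almost every $\theta$, $\log|A(re^{i\theta})| = (h_A(\theta) + o(1))r^n$, and similarly $\log|g'(re^{i\theta})|$ has a well-defined growth rate $h_{g'}(\theta)$ (here invoking that $g'$ is of completely regular growth — this needs $g'$, equivalently $g$, to be an exponential polynomial, which should follow from $-A = (g')^2 + \cdots$ and a Steinmetz-type argument, or can be deduced directly). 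Taking $r^{-n}\log|\cdot|$ through $-A = (g')^2 + o(r^n)$-error terms and letting $r \to \infty$ gives $h_A(\theta) = 2h_{g'}(\theta)$ on such rays. The subtlety is justifying that the lower-order terms genuinely contribute only $o(r^n)$ pointwise (not merely in an averaged/indicator sense) along almost every ray; this is exactly where one quotes the completely-regular-growth property and the $R$-set/exceptional-set bookkeeping from Section~\ref{elementary-sec}.

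**For part (3)**, if $h_A(\theta) \le 0$ then along that ray $\log|A(re^{i\theta})| \le o(r^n)$ for almost every such $\theta$; feeding this into $-A = (g')^2 + \cdots$ and using that the cross terms are lower order, one concludes $|g'(re^{i\theta})|^2$ cannot grow faster than the largest competitor, so $\log^+|g'(re^{i\theta})| \le \tfrac12 \log^+|A(re^{i\theta})| + O(r^{n-1}) + O(\log r)$. The $O(r^{n-1})$ accounts for the polynomial coefficients $P_j$ in the $H_j$'s and for $h_{\pi'/\pi}$-scale contributions; the $O(\log r)$ for genuine polynomial factors. For the converse direction, if $h_{g'}(\theta) \le 0$ then $(g')^2$ and $(g')' $ and the cross terms are all $o(r^n)$ in modulus along almost every such ray, whence $\log^+|A(re^{i\theta})| = \log^+|(g')^2 + \cdots| \le o(r^n)$.

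**The main obstacle** I anticipate is not any single inequality but the passage from \emph{indicator-level} statements (which are averaged/limsup statements) to \emph{pointwise-along-a-ray} statements of the form in (3): one must invoke that $A$ — and crucially $g'$ — are of completely regular growth so that \eqref{f-behavior} applies, and one must control the exceptional angles (finitely many for each exponential polynomial) and the $R$-set for any poles, uniformly enough to say ``almost every such $\theta$.'' Establishing that $g$ (hence $g'$) is itself an exponential polynomial, starting only from $-A = (g')^2 + (\text{lower order}) g' + (\text{lower order})$ with $A$ an exponential polynomial and $g$ entire of the right order, is the technical heart; once that is in hand, parts (1)–(3) follow by the indicator calculus and the growth estimates already assembled in Section~\ref{elementary-sec}.
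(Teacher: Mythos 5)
Your substitution $f = \pi e^{g}$ and the resulting identity expressing $A$ in terms of $g'$, $g''$ and logarithmic derivatives of $\pi$ is exactly the paper's starting point, and your Part~(1) is essentially sound: the inequality $T(r,A)\le 2T(r,g')+O(\log r)$ comes from factoring $g'$ out of the identity, and the reverse bound $T(r,g')\le T(r,A)+O(\log r)$ comes from dividing the rearranged identity $(g')^2 = -A - g'' - 2g'(\pi'/\pi) - \pi''/\pi$ by $g'$, splitting on $|g'|\le 1$ versus $|g'|>1$, and integrating; your intermediate claim $m(r,g')\le\tfrac12 m(r,A)+O(\log r)$ is optimistic and not needed, since the lemma only asserts $T(r,g')\le T(r,A)+O(\log r)$.

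The genuine gap is in Part~(2), and you have in fact put your finger on it yourself: your argument passes through pointwise asymptotics of type \eqref{f-behavior}, which requires $g'$ to be of completely regular growth, i.e.\ essentially that $g$ (or $g'$) is an exponential polynomial. Nothing in the hypotheses of Lemma~\ref{apu-lemma} supplies that, and establishing it from $-A = (g')^2 + \cdots$ would be a substantial project in its own right. The paper sidesteps this entirely. It applies Gundersen's logarithmic-derivative estimate (\cite[Corollary~1]{Gundersen}) directly to the two rearranged identities: since $h_{\pi'/\pi}(\theta)=h_{\pi''/\pi}(\theta)=h_{g''/g'}(\theta)=0$ for almost all $\theta$, and $h_{g''}\le h_{g'}$ by \eqref{derivative-indicator}, one obtains $h_A(\theta)\le\max\{0,2h_{g'}(\theta),h_{g'}(\theta)\}$ and $2h_{g'}(\theta)\le\max\{0,h_A(\theta),h_{g'}(\theta)\}$ for almost all $\theta$, and then for every $\theta$ by continuity of the (bounded) indicators. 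For $h_A(\theta)>0$ these give the two-sided equality $h_A(\theta)=2h_{g'}(\theta)$ without any regular-growth input on $g'$. Part~(3) is similarly handled in the paper from the already-derived pointwise inequality $\log^+|g'(z)|\le\log^+|A(z)|+\log^+|g''/g'|+\log^+|\pi'/\pi|+\log^+|\pi''/\pi|+O(1)$ together with Gundersen's almost-everywhere bounds; your version of (3), which works with the un-divided identity, runs into the circularity that the cross terms $2(\pi'/\pi)g'$ involve $g'$ itself, so one cannot declare them lower order before controlling $g'$. In short: replace the completely-regular-growth appeal by Gundersen-type indicator estimates and continuity, and the argument closes without the ``technical heart'' you flagged.
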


The assertion $\rho(A)=\rho(g)$ in Lemma~\ref{apu-lemma} is known -- the existing proofs are typically based on
Clunie's theorem. We will give an alternative short proof which can be modified to justify the assertions
in Parts (1)--(3). Remark~\ref{PeQ-rem} above shows that if $\pi(z)$ and $g(z)$
are exponential polynomials, then the first inequality in Part (1) is in fact an equality.

\bigskip
\noindent
\emph{Proof of Lemma~\ref{apu-lemma}.}
A substitution of $f=\pi e^g$ to \eqref{lde2} gives
    \begin{equation}\label{subst}
    \pi''+2g'\pi'+(g''+(g')^2+A)\pi=0,
    \end{equation}
where $\pi$ is a canonical product formed by the zeros of $f$.
By re-writing \eqref{subst} as
    \begin{equation}\label{AA}
    A=-g''-(g')^2-2g'\frac{\pi'}{\pi}-\frac{\pi''}{\pi},
    \end{equation}
we see by means of a standard lemma on the logarithmic derivative that $\rho(A)\leq \max\{\rho(g'),\rho(g'')\}=\rho(g)$.
Next, we write \eqref{subst} as
    \begin{equation}\label{g-prime}
    (g')^2=-A-g''-2g'\frac{\pi'}{\pi}-\frac{\pi''}{\pi}.
    \end{equation}
Dividing \eqref{g-prime} by $g'$, and considering separately the cases $|g'(z)|\leq 1$ and
$|g'(z)|>1$, it follows that
    \begin{equation}\label{g-prime2}
    \begin{split}
    \log^+|g'(z)| \leq &\log^+ |A(z)|+\log^+\left|\frac{g''(z)}{g'(z)}\right|\\
    &+\log^+\left|\frac{\pi'(z)}{\pi(z)}\right|+\log^+\left|\frac{\pi''(z)}{\pi(z)}\right|+O(1)
    \end{split}
    \end{equation}
holds for every $z\in\C$. Therefore $\rho(g)=\rho(g')\leq \rho(A)$.

(1) Equation \eqref{AA} yields
    \begin{eqnarray*}
    T(r,A) &=& m(r,A)=m\left(r,g'\left(\frac{g''}{g'}+g'+2\frac{\pi'}{\pi}\right)\right)+O(\log r)\\
    &\leq & 2m(r,g')+O(\log r)=2T(r,g')+O(\log r).
    \end{eqnarray*}
The second assertion $T(r,g')\leq T(r,A)+O(\log r)$ follows similarly from \eqref{g-prime}.

(2) By applying \cite[Corollary~1]{Gundersen} to \eqref{AA}, we have
    $$
    h_A(\theta)\leq \max\{0,h_{g''}(\theta),h_{(g')^2}(\theta),h_{g'}(\theta)\}
    $$
for almost all $\theta$. Using \eqref{derivative-indicator} and the continuity of indicator functions,
we have $h_A(\theta)\leq \max\{0,h_{(g')^2}(\theta),h_{g'}(\theta)\}$ for all $\theta$. If $h_A(\theta)>0$,
then it follows that $h_A(\theta)\leq 2h_{g'}(\theta)$. Similarly, by applying
\cite[Corollary~1]{Gundersen} to \eqref{g-prime}, we get $2h_{g'}(\theta)\leq h_A(\theta)$
for all $\theta$ for which $h_A(\theta)>0$.

(3) Let $\theta$ be such that $h_A(\theta)\leq 0$. Since $A(z)$ is an exponential polynomial, we conclude that
$\log^+ |A(re^{i\theta})|\leq O\left(r^{n-1}\right)+O(\log r)$. The assertion for $g'$ follows from \eqref{g-prime2}
by means of \cite[Corollary~1]{Gundersen}. Conversely, if $h_{g'}(\theta)\leq 0$, then
$\log^+ |g'(re^{i\theta})|\leq o\left(r^{n}\right)$ as $r\to\infty$ by the definition of the indicator.
The assertion for $A(z)$ follows from \eqref{AA}.\hfill$\Box$

\bigskip
If $H_0(z)\not\equiv 0$ in \eqref{A0}, then $h_A(\theta)\geq 0$ for every $\theta$. It is possible that $h_A(\theta)=0$
on an interval or on finitely many subintervals of $[-\pi,\pi)$. A trivial example would be $A(z)=e^z$.
The leading coefficients can also be chosen such that $h_A(\theta)=0$ only on finitely many points on $[-\pi,\pi)$,
and $h_A(\theta)>0$ on the rest of the interval. For example, $A(z)=e^z+e^{-z}$ has this property.
In such a case $h_A(\theta)=2h_{g'}(\theta)$ holds for every $\theta$ by the continuity of (bounded) indicator functions.

\begin{example}
If
    \begin{equation*}
    \begin{split}
    A(z)=\ & e^{iz}+e^{-iz}-e^{z}-e^{-z}+e^{2iz}+e^{-2iz}-e^{2z}-e^{-2z}\\
    &\!+2ie^{(1-i)z}+2ie^{(i-1)z}-2ie^{(1+i)z}-2ie^{-(1+i)z},
    \end{split}
    \end{equation*}
then \eqref{lde2} possesses a zero-free solution $f(z)=\exp\left(e^z+e^{-z}+e^{iz}+e^{-iz}\right)$.
The convex hull of the conjugates of the leading coefficients of $A(z)$ is a square determined by the corners $\pm2,\pm2i$ and
has a circumference $8\sqrt{2}$. In particular, \eqref{lde2} may have zero-free solutions even if $h_A(\theta)>0$ for every $\theta$.
\end{example}

\bigskip
\noindent
\emph{Proof of Theorem~\ref{n-thm-c}.}
Suppose on the contrary to the assertion that $f=\pi e^g$ with $\lambda(\pi)<n$ solves \eqref{lde2}. We may
choose a constant $\alpha>0$ such that $\max\{\lambda(\pi),n-1\}<\alpha<n$. Substituting $f$ in \eqref{lde2} gives us
\eqref{g-prime}. Then eliminating $e^{\zeta_1z^n}$ from \eqref{g-prime} and writing $R=H_1'/H_1+n\zeta_1z^{n-1}$,
we have
    \begin{equation}\label{elimination}
    2Ug'=C+D,
    \end{equation}
where
    \begin{eqnarray*}
    U&=&g''-Rg'/2,\\
    C&=&\sum_{j=2}^m (RH_j-H_j'-n\zeta_jz^{n-1}H_j)\exp(\zeta_jz^n),\\
    D&=& -g'''+\left(R-2\frac{\pi'}{\pi}\right)g''+2\left(R\frac{\pi'}{\pi}-\left(\frac{\pi'}{\pi}\right)'\right)g'
    +R\frac{\pi''}{\pi}-\left(\frac{\pi''}{\pi}\right)'.
    \end{eqnarray*}
If $RH_j-H_j'-n\zeta_jz^{n-1}H_j=0$ for some index $j$, then $H_j$ solves
    $$
    H_j'+\left(n(\zeta_j-\zeta_1)z^{n-1}-\frac{H_1'(z)}{H_1(z)}\right)H_j=0,
    $$
that is, $H_j(z)=KH_1(z)\exp((\zeta_1-\zeta_j)z^n)$ for some constant $K\in\C$. But this is
a contradiction since $\zeta_j\neq \zeta_1$, $H_j\not\equiv 0$ and $\rho(H_j)\leq n-1$ for all $j$.

We may write \eqref{elimination} in the alternative form
    \begin{equation}\label{elimination2}
    F_1g'=F_2,
    \end{equation}
where
    \begin{eqnarray}
    F_1 &=& 2U+\frac12 R'-\frac14 R^2-R\frac{\pi'}{\pi}
    +2\frac{\pi''}{\pi}-2\left(\frac{\pi'}{\pi}\right)^2,\nonumber \\
    F_2 &=& -U'+\frac12 RU-2\frac{\pi'}{\pi}U+R\frac{\pi''}{\pi}-\frac{\pi'''}{\pi}
    +\frac{\pi''\pi'}{\pi^2}+C.\label{C2}
    \end{eqnarray}

Using the representation \eqref{elimination}, we proceed to show that $F_1\equiv 0$ and $F_2\equiv 0$.
A key step is to prove that
    \begin{equation}\label{TU}
    T(r,U)=O(r^\alpha).
    \end{equation}
The possible poles of $U$ are among the zeros of $H_1$, thus $V=UH_1$ is an entire function.
Equation \eqref{elimination} can now be written as
    \begin{equation}\label{elimination3}
    2Vg'=(C+D)H_1.
    \end{equation}
As a passage to \eqref{TU}, we prove
    \begin{equation}\label{U}
    \log^+ |V(re^{i\theta})|= O(r^\alpha),\quad r\to\infty,\ \theta\not\in E_0,
    \end{equation}
where $E_0\subset [-\pi,\pi)$ is of measure zero.
If $H_1$ is a polynomial, then $R$ is a rational function, and so $\log^+ |R(z)|=O(\log |z|)$.
Hence we suppose that $m=\rho(H_1)\geq 1$, and keep in mind that $m\leq n-1<\alpha$. By making use of
\eqref{f-behavior} and \eqref{derivative-indicator} for $H_1$ instead of $f$, we deduce that
    \begin{equation}\label{R}
    \log^+ |R(re^{i\theta})|=o\left(r^{m}\right),\quad r\to\infty,
    \end{equation}
for every $\theta\in [-\pi,\pi)$ with at most finitely many exceptions.
If $z$ is such that $|g'(z)|\leq 1$, then $U=(g''/g'-R/2)\cdot g'$ together with \eqref{R} and \cite[Corollary~1]{Gundersen} yield
    $$
    \log^+ |V(z)|\leq \log^+ \left(\left|\frac{g''(z)}{g'(z)}\right|+\frac{1}{2}|R(z)|\right)+\log^+|H_1(z)|
    \leq O\left(|z|^\alpha\right)
    $$
as $|z|\to\infty$ with $\arg(z)\not\in E_0$, where $\operatorname{meas}\,(E_0)=0$. Hence from now on we may suppose that $|g'(z)|>1$. If
$\theta\not\in E_0$ is such that $h_C(\theta)\leq 0$, then
    $$
    \log^+ |C(re^{i\theta})|\leq O\left(r^{n-1}\right),
    $$
and dividing \eqref{elimination3} by $2g'$ gives us
\eqref{U}. On the other hand, if $\theta\not\in E_0$ is such that $h_C(\theta)>0$, then
$h_B(\theta)=h_C(\theta)>0$, since we have proved that none of the coefficients of the exponential polynomial $C$ vanishes.
We have by the assumption and by Lemma~\ref{apu-lemma} that
    $$
    2h_{g'}(\theta)=h_A(\theta)>2h_C(\theta).
    $$
Dividing \eqref{elimination3} again by $2g'$ gives us \eqref{U} in this case also.

Since $V$ is entire and of finite order, we may use the standard Phragm\'en-Lindel\"of principle to deduce that the
estimate in \eqref{U} is uniform, and that the exceptional set $E_0$ can be ignored. Therefore $m(r,V)=O\left(r^\alpha\right)$, and so
    $$
    m(r,U)=m(r,V/H_1)\leq m(r,V)+T(r,H_1)+O(1)= O\left(r^\alpha\right).
    $$
Finally, since $\lambda(1/U)\leq n-1$, the assertion \eqref{TU} follows.

Next, we continue the proof by estimating $T(r,F_1)$ and $T(r,F_2)$ under the assumption that $F_1F_2\not\equiv 0$.
Since $\rho(R)\leq n-1$ and $\rho(\pi)<\alpha$, we obtain by \eqref{TU} that
    $$
    T(r,F_1)=O(r^\alpha).
    $$
In the same manner we deduce that
    $$
    T(r,F_2)\leq T(r,C)+O(r^\alpha).
    $$
The identity $T(r,C)=T(r,B)+O(r^\alpha)$
being clear by Theorem~\ref{Stein-thm}, we then get from \eqref{elimination2} that
    $$
    T(r,g')\leq T(r,F_1)+T(r,F_2)+O(1)\leq T(r,B)+O(r^\alpha).
    $$
By using the assumption $C(\co(W_0^A))>2C(\co(W_0^B))$ together with Lemma~\ref{apu-lemma} and Theorem~\ref{Stein-thm},
we arrive at a contradiction. Hence at least one of $F_1$ or $F_2$ must vanish. A fortiori they both
vanish by \eqref{elimination2}.

Finally, since $F_2\equiv 0$, we infer from \eqref{C2} that
    $$
    T(r,C)=O(r^\alpha),
    $$
which is clearly a contradiction. Thus $\lambda(f)\geq n$. \hfill$\Box$

\section{An improvement of the $\frac{1}{16}$-theorem}

The following improvement of the $\frac{1}{16}$-theorem (see Theorem~\ref{BLL-thm}) is motivated
by the assumptions in Theorem~\ref{n-thm-c}.

\begin{theorem}\label{referee-thm}
Suppose that $A(z)=T(z)+B(z)$, where
    $$
    T(z)=H(z)e^{\zeta z^n},\quad \zeta\in\C\setminus\{0\},
    $$
$H(z)$ is an entire function of order $<\alpha<n$, and $B(z)$
is a finite order entire function with the following property: For each $\theta\in [-\pi,\pi)$
there exists $s(\theta)<1/2$ with
    \begin{equation}\label{B}
    \log^+|B(re^{i\theta})|\leq s(\theta)\max\left\{\Re (\zeta e^{in\theta}),0\right\}r^n+r^\alpha,
    \end{equation}
as $r\to\infty$, but not necessarily uniformly in $\theta$. If \eqref{lde2} possesses a nontrivial
solution $f$ with $\lambda(f)<\alpha<n$, then $f$ and $H(z)$ have no zeros, and
    \begin{equation}\label{B2}
    B(z)=-\frac{1}{16}\left(\frac{T'(z)}{T(z)}\right)^2\\
    +\frac14 \left(\frac{T'(z)}{T(z)}\right)'.
    \end{equation}
Moreover, \eqref{lde2} admits in this case a zero-free solution base.
\end{theorem}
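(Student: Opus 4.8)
The plan is to run the argument of Theorem~\ref{n-thm-c} in the degenerate case of a single exponential term $T(z)=H(z)e^{\zeta z^n}$, and then to read off the precise form of $B(z)$ from the identities it produces. First I would write $f=\pi e^g$ with $\pi$ the canonical product of the zeros of $f$, so that $\rho(\pi)=\lambda(f)<\alpha<n$ and, since $f/\pi$ is entire and zero-free, $g$ and $g'$ are entire; after replacing $\alpha$ by a larger number $<n$ if necessary (which only weakens \eqref{B}) we may also assume $\alpha>n-1$. Substituting into \eqref{lde2} gives $T=A-B=-g''-(g')^2-2g'\pi'/\pi-\pi''/\pi-B$, and, writing $R:=H'/H+n\zeta z^{n-1}=T'/T$ and eliminating the exponential as in the derivation of \eqref{elimination}--\eqref{elimination2}, one obtains
\[
2Ug'=D+RB-B',\qquad F_1g'=F_2,
\]
with $U=g''-\tfrac12Rg'$ and $D,F_1,F_2$ the expressions from that proof in which the remainder term $C$ now stands for $RB-B'$. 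Here $\rho(A)=n$ (by \eqref{B} with $s(\theta)<1$ the order-$n$ term $He^{\zeta z^n}$ cannot be cancelled by $B$) and $A$ is of finite type, so $h_A$ is continuous; the proof of Lemma~\ref{apu-lemma} then goes through (with the bound $O(r^{n-1})$ in Part~(3) replaced by $O(r^\alpha)$, using \eqref{B} and a minimum-modulus estimate for $H$) and gives $\rho(g)=n$, $h_A(\theta)=2h_{g'}(\theta)$ whenever $h_A(\theta)>0$, and — by \eqref{sum} and \eqref{B} — $h_A(\theta)=\Re(\zeta e^{in\theta})$ wherever $\Re(\zeta e^{in\theta})>0$.

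The technical core, and the step I expect to be the main obstacle, is the estimate $T(r,U)=O(r^\alpha)$, i.e.\ \eqref{TU}. With $V=UH$ (entire), the elimination equation reads $2Vg'=(D+RB-B')H$, and one bounds $\log^+|V(re^{i\theta})|$ for almost every $\theta$ as in the proof of \eqref{TU}: on $\{|g'|\le1\}$ from $U=(g''/g'-R/2)g'$ and the logarithmic derivative lemma, and on $\{|g'|>1\}$ by dividing by $2g'$. The one new input is the term $RB-B'=B(R-B'/B)$: if $\Re(\zeta e^{in\theta})\le0$ then \eqref{B} gives $\log^+|B(re^{i\theta})|\le r^\alpha$ directly, while if $\Re(\zeta e^{in\theta})>0$ then Lemma~\ref{apu-lemma}(2) gives $h_{g'}(\theta)=\tfrac12\Re(\zeta e^{in\theta})>s(\theta)\Re(\zeta e^{in\theta})$, so that $B/g'$ is negligible in direction $\theta$; a Phragm\'en--Lindel\"of argument promotes the ray-wise bound to $m(r,V)=O(r^\alpha)$, whence $T(r,U)=m(r,U)+N(r,U)=O(r^\alpha)$ since the poles of $U$ lie among the zeros of $H$. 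The delicacy is that, unlike the exponential polynomial $C$ in Theorem~\ref{n-thm-c}, the present $B$ need not be of completely regular growth, so the estimate must be extracted direction by direction from the one-sided bound \eqref{B} together with the lower control on $g'$ supplied by Lemma~\ref{apu-lemma}(2), with exceptional rays and zeros of $g'$ handled as in Steinmetz's treatment.

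Granting \eqref{TU}, I would next show $F_1\equiv F_2\equiv0$. If instead $F_1\not\equiv0\not\equiv F_2$, then $T(r,F_1)=O(r^\alpha)$ and $T(r,F_2)\le T(r,B)+O(r^\alpha)$ (using $RB-B'=B(R-B'/B)$), so $F_1g'=F_2$ gives $T(r,g')\le T(r,B)+O(r^\alpha)$ and hence $T(r,A)\le2T(r,B)+O(r^\alpha)$ by Lemma~\ref{apu-lemma}(1). But \eqref{B} yields
\[
2T(r,B)\le \frac{r^n}{\pi}\int_{-\pi}^{\pi}s(\theta)\max\{\Re(\zeta e^{in\theta}),0\}\,d\theta+o(r^n),
\]
whereas, using $|A|\ge|T|-|B|$, the fact that $s(\theta)<1$, and a minimum-modulus estimate for $H$, one gets $T(r,A)\ge(1-o(1))\frac{r^n}{2\pi}\int_{-\pi}^{\pi}\max\{\Re(\zeta e^{in\theta}),0\}\,d\theta$. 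Since $\int_{-\pi}^{\pi}(\tfrac12-s(\theta))\max\{\Re(\zeta e^{in\theta}),0\}\,d\theta>0$ (the integrand being positive on a set of measure $\pi$, as $\zeta\ne0$ and $s(\theta)<\tfrac12$), these two inequalities are incompatible. Therefore $F_1\equiv0$, and then $F_2=F_1g'\equiv0$; note $g'\not\equiv0$, for otherwise $\rho(A)\le\rho(\pi)<n$.

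Finally I would extract the conclusion from $F_1\equiv F_2\equiv0$. Writing $p=\pi'/\pi$ and using $\pi''/\pi=p'+p^2$, the relation $F_1\equiv0$ collapses to $U=\tfrac18R^2-\tfrac14R'+\tfrac12Rp-p'$ (the $p^2$-terms cancelling). Substituting this and $U'$ into $F_2\equiv0$ and simplifying, \emph{every term containing $\pi$ cancels}, leaving the linear first-order identity
\[
B'-RB=\tfrac1{16}R^3-\tfrac38RR'+\tfrac14R''.
\]
A particular solution is $-\tfrac1{16}R^2+\tfrac14R'$, and the homogeneous solutions are the constant multiples of $e^{\int R}=He^{\zeta z^n}=T$, so $B=-\tfrac1{16}(T'/T)^2+\tfrac14(T'/T)'+cT$ for some $c\in\C$. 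If $c\ne0$ then $\limsup_{r\to\infty}r^{-n}\log^+|B(re^{i\theta})|=\Re(\zeta e^{in\theta})$ for a.e.\ $\theta$ with $\Re(\zeta e^{in\theta})>0$, contradicting \eqref{B} since $s(\theta)<1$; hence $c=0$ and \eqref{B2} holds. As $B$ is entire, $-\tfrac1{16}R^2+\tfrac14R'$ is entire, so at a zero of $H$ of multiplicity $\ell$ — where $R$ has a simple pole — the double pole of $-\tfrac1{16}R^2+\tfrac14R'$ has leading coefficient a nonzero multiple of $\ell(\ell+4)$, which forces $\ell=0$; thus $H$, and hence $T$, is zero-free. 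Consequently $T$ is a zero-free entire function of finite order, so $T=e^{P}$ with $P=\zeta z^n+\log H$ a polynomial of degree $n$ and $R=P'$, and $A=e^{P}+Q$ with $Q=-\tfrac1{16}(P')^2+\tfrac14P''$. Theorem~\ref{BLL-thm} then yields that $f$ and $H$ have no zeros and that \eqref{lde2} admits a zero-free solution base.
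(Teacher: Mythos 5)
Your plan is structurally very close to the paper's own proof: translating notation, your $F_1$ is exactly the paper's $N=2L'-tL$ (one checks $F_1=2F'-tF+\tfrac12 t'-\tfrac14 t^2=N$ with $F=f'/f$, $t=T'/T$), and the crux in both proofs is showing this vanishes. Where you genuinely depart from the paper, the results are good. Your contradiction argument for $F_1\equiv 0$ uses an averaged Nevanlinna--characteristic estimate (bounding $T(r,B)$ by reverse Fatou applied to \eqref{B}, then contradicting $T(r,A)\le 2T(r,B)+O(r^\alpha)$), whereas the paper proves $\rho(S)\le\beta$ by Phragm\'en--Lindel\"of on rays and then contradicts a lower $R$-set bound for $\log|S|$; your version works and is perhaps more transparent, provided you note that \eqref{B} plus Phragm\'en--Lindel\"of makes $\log M(r,B)/r^n$ uniformly bounded, so reverse Fatou applies, and that a.e.\ strict pointwise inequality of the integrands gives a strict integral inequality. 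Your endgame is also cleaner than the paper's: the paper solves $2L'-tL=0$ to get $L=cT^{1/2}$ and back-substitutes, whereas you substitute $F_1\equiv 0$ into $F_2\equiv 0$, watch all $\pi$-terms cancel, and obtain the first-order ODE $B'-RB=\tfrac1{16}R^3-\tfrac38RR'+\tfrac14R''$ whose general solution $-\tfrac1{16}R^2+\tfrac14R'+cT$ forces $c=0$ by \eqref{B}; this is a nice derivation and the computation checks out (including the $-\ell(\ell+4)/16$ residue argument that kills zeros of $H$).

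The genuine soft spot is where you expected it, namely the estimate $T(r,U)=O(r^\alpha)$. To divide the elimination identity $2Vg'=(D+RB-B')H$ by $g'$ on rays with $\Re(\zeta e^{in\theta})>0$, you invoke Lemma~\ref{apu-lemma}(2) to get $h_{g'}(\theta)=\tfrac12\Re(\zeta e^{in\theta})>h_B(\theta)$ and conclude that $B/g'$ is negligible along that ray. But the indicator is only a $\limsup$, so this does not supply the \emph{pointwise} lower bound $\log|g'(re^{i\theta})|\ge \tfrac12\Re(\zeta e^{in\theta})r^n-O(r^\alpha)$ for all large $r$ that the division actually requires; a priori $|g'|$ could be small along a subsequence of radii. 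The missing ingredient is to return to the exact relation $(g')^2=-T-B-g''-2g'\pi'/\pi-\pi''/\pi$, rule out $|g'(re^{i\theta})|\le 1$ (otherwise the right side is $O(|B|)+O(r^{N})$, too small to match $|T|$), and then use the $R$-set logarithmic derivative bounds and a minimum modulus estimate for $H$ to get $|g'|^2\ge |T|(1-o(1))/O(r^{N})$ pointwise. This is precisely the role played in the paper's proof by the exact identity $L^2=-T-B+M$ together with the $R$-sets $E_3\subset E_4$: it turns the dominance of $T$ into a pointwise lower bound on $|L|$, which you need (in your variables) on $|g'|$. With that fix inserted, your proof is complete and amounts to a variant of the paper's, with a different contradiction mechanism and a cleaner algebraic finish.
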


It is easy to see that \eqref{B2} forces $B(z)$ to be a polynomial of degree $2(n-1)$ with
leading term $-16^{-1}n^2\zeta^2z^{2(n-1)}$, so the situation is not that much different from
that in Theorem~\ref{BLL-thm}. As for the sharpness of Theorem~\ref{referee-thm},
if we permit $s(\theta)$ to attain the value $1/2$, then the existence of a zero-free solution
does not imply \eqref{B2}, see Example~\ref{8}.

\begin{lemma}\label{minmod-lemma}
Let $g$ be an entire function of order $\rho(g)<\alpha$. Then $|\log |g(z)||\leq |z|^\alpha$ for
all $z$ outside of an $R$-set.
\end{lemma}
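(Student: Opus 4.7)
The plan is to derive the upper and lower bounds on $\log|g(z)|$ separately. The upper bound is immediate from the definition of order, while the lower bound is the substantive content and relies on the classical minimum modulus estimate for canonical products outside an $R$-set.

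First, I would fix $\varepsilon>0$ so small that $\rho(g)+\varepsilon<\alpha$. By the definition of order, $\log M(r,g)\leq r^{\rho(g)+\varepsilon}$ for $r$ sufficiently large, so that
\[
\log^+|g(z)|\leq |z|^{\rho(g)+\varepsilon}\leq |z|^\alpha
\]
for every $z$ with $|z|$ large enough. This half of the estimate requires no exceptional set.

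For the lower bound I would invoke Hadamard's factorization theorem to write
\[
g(z)=z^m e^{P(z)}\Pi(z),
\]
where $m\ge 0$ is the order of the zero of $g$ at the origin, $P$ is a polynomial of degree at most $\rho(g)$, and $\Pi$ is the canonical product formed over the non-zero zeros $\{a_n\}$ of $g$. The factor $z^m e^{P(z)}$ contributes $m\log|z|+\Re P(z)=O(|z|^{\rho(g)})$ to $\log|g(z)|$, so everything reduces to a lower bound for $\log|\Pi(z)|$. This is the classical minimum modulus estimate (see, e.g., Levin or \cite[Chapter~5]{Laine}): there exists a family of discs $D(a_n,r_n)$ with $\sum_n r_n<\infty$---that is, an $R$-set $E$---such that
\[
\log|\Pi(z)|\geq -|z|^{\rho(g)+\varepsilon},\qquad z\notin E,\ |z|\text{ large}.
\]
Combining this with the polynomial-type contribution from $z^m e^{P(z)}$ yields $-\log|g(z)|\leq |z|^\alpha$ outside $E$ (and outside a small disc around the origin, which can be absorbed into $E$). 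Together with the trivial upper bound, this gives $|\log|g(z)||\leq |z|^\alpha$ outside an $R$-set.

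The main obstacle is the minimum modulus estimate for $\Pi$. Standard proofs split the primary factors into those with $|a_n|\leq 2|z|$ and $|a_n|>2|z|$, bound the latter collection trivially using that $\lambda(g)\leq \rho(g)$ implies $\sum|a_n|^{-\rho(g)-\varepsilon}<\infty$, and estimate the former by excluding sufficiently small discs around each $a_n$, with the radii $r_n$ chosen (for instance $r_n=|a_n|^{-\rho(g)-\varepsilon}$) so that $\sum r_n<\infty$. Since this estimate is standard and fully documented in the literature, the proof of the lemma reduces to citing it with $\varepsilon$ chosen so that $\rho(g)+\varepsilon<\alpha$.
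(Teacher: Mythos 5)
Your proposal is correct and follows essentially the same route as the paper: the upper bound is trivial from the definition of order, and the lower bound comes from Hadamard factorization combined with the classical minimum modulus estimate for canonical products outside an $R$-set (the paper cites Tsuji, Theorem V.19, for this last step). The only cosmetic difference is that you separate out the $z^m$ factor while the paper absorbs it into the canonical product $P$; the argument is the same.
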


\begin{proof}
The inequalities
    $
    \log |g(z)|\leq \log M(|z|,g)\leq |z|^\alpha
    $
being trivial, it suffices to prove $\log |g(z)| \geq -|z|^\alpha$ for suitable $z$.
We may write $g=Pe^h$, where $h$ is a polynomial of degree $<\alpha$, and $P$ is a canonical
product formed with the zeros $z_n$ of $g$. If $g$ has no zeros or has at most finitely many zeros (that is,
$P$ is a polynomial), the assertion is trivial. If $P$ is transcendental, then
\cite[Theorem V.~19]{Tsuji} gives us
    $$
    \log^+\frac{1}{|P(z)|}=O\left(|z|^{\rho(P)+\veps}\right)
    $$
for all $z$ outside of an $R$-set \cite[p.~84]{Laine}, where we have fixed $\veps<\alpha-\rho(P)$. This yields the assertion.
\end{proof}

\noindent
\emph{Proof of Theorem~\ref{referee-thm}.}
First we observe that the assumption \eqref{B} together with the Phragm\'en-Lindel\"of principle yield
$T(r,B)=O\left(r^n\right)$. Moreover, the assumption \eqref{B} holds with $B$ being replaced by $B'$
for almost all $\theta$.

Set $F(z)=f'(z)/f(z)$. Then $\lambda(1/F)< \alpha$
by the assumption, and so $\rho(F)<\alpha$ by the standard lemma on the logarithmic derivative.
By \cite[Proposition~5.12]{Laine} there exists a constant $N_0>0$ and an $R$-set $E_1$ surrounding the
zeros of $f$ such that
    \begin{equation}\label{e1}
    \left|\frac{F''(z)}{F(z)}\right|+\left|\frac{F'(z)}{F(z)}\right|+\left|\frac{T'(z)}{T(z)}\right|
    =O\left(|z|^{N_0}\right),\quad z\not\in E_1.
    \end{equation}
By Lemma~\ref{minmod-lemma} there exists an $R$-set $E_2$ such that
    \begin{equation}\label{e2}
    |\log |H(z)||\leq |z|^\alpha,\quad z\not\in E_2.
    \end{equation}
Recall that the set of angles $\theta$ for which the ray $re^{i\theta}$ meets infinitely many discs
of a given $R$-set has linear measure zero by \cite[Lemma~5.9]{Laine}. Thus the estimates in \eqref{e1} and \eqref{e2} are valid
for almost all $\theta$ as $|z|\to\infty$.

We have the identities
    \begin{equation}\label{e3}
    F'+F^2 = -A=-T-B,
	\end{equation}
	\begin{equation}\label{e4}
    F''+2FF' = -T'-B'=t(F'+F^2+B)-B',
    \end{equation}
where $t=T'/T$ has order $\rho(t)<\alpha$. Set $G=2F'-tF$. Then \eqref{e4} yields
    \begin{eqnarray*}
    FG &=& 2FF'-tF^2=-F''+t(F'+B)-B'.
    \end{eqnarray*}
There exists an entire function $U\not\equiv 0$ with $\rho(U)<\alpha$ such that $J=GU$ is entire, and
    \begin{equation}\label{e5}
    FJ=FGU=-U(F''-t(F'+B)+B').
    \end{equation}
It is clear that $\rho(G)<\alpha$, so that $\rho(J)<\alpha$.

We easily obtain
	\begin{equation*}
	F^2+tF/2 = -T-B-K,
    \end{equation*}
where $K=F'-tF/2=G/2$, and so
    \begin{equation}\label{L}
	L^2=-T-B+M,
	\end{equation}
where $L=F+t/4$ and $M=t^2/16-K$. The functions $K,L,M$ are meromorphic and of order $\leq\alpha$.
It is clear that $L\not\equiv 0$ because $f$ is of infinite order, while $T$ is of finite order. A differentiation of \eqref{L} gives
	$$
	2LL' = -B'+M'-T'=-B'+M'+t(F'+F^2+B),
    $$
followed by a simple manipulation
    \begin{equation}\label{simple}
	L(2L'-tL)=-B'+M'+t(B-M).
    \end{equation}

Set $N=2L'-tL$. The crux of the proof is to show that
    \begin{equation}\label{crux}
    N\equiv 0.
    \end{equation}
Suppose on the contrary to this that $N\not\equiv 0$.
The poles of $N$ are among the zeros of $f$ and $T$. Hence $\lambda(1/N)<\alpha$, and so there exists
an entire function $Q\not\equiv 0$ of order $\leq \alpha$ such that $S=NQ$ is entire. Moreover, \eqref{simple} yields
    \begin{equation}\label{LS}
    \begin{split}
    LS &= LNQ=-Q(B'-M'+t(M-B))\\
    &=-Q\left(\left(\frac{B'}{B}-t\right)B-M'+tM\right).
    \end{split}
    \end{equation}
We proceed similarly as above. First, we find that there exists a constant $N_1>0$ and an $R$-set $E_3\supset E_1\cup E_2$
surrounding the zeros of $f$ and $T$ such that
 	$$
    \left|\frac{F''(z)}{F(z)}\right|+\left|\frac{F'(z)}{F(z)}\right|+\left|\frac{L'(z)}{L(z)}\right|
    =O\left(|z|^{N_1}\right),\quad z\not\in E_3,
	$$
and
	$$
    |\log |H(z)||+\log^+|M(z)|+\log^+|M'(z)|\leq |z|^\beta,\quad z\not\in E_3,
    $$
where $\alpha<\beta<\gamma<n$. Second, we find sectorial estimates for $L$ and $S$.
For almost all $\theta$ for which $h_T(\theta)>0$, we have
	\begin{equation}\label{T}
	\Re(\zeta e^{in\theta})r^n-O\left(r^\alpha\right)\leq \log |T(re^{i\theta})|
	\end{equation}
by \eqref{e2} and then $2h_L(\theta)=h_T(\theta)$ by \eqref{L}.  Dividing \eqref{LS} by $L$,
we have by \eqref{B} and \eqref{T} that
	\begin{equation}\label{SS}
    \begin{split}
	\log |S(re^{i\theta})| &\leq  (s(\theta)-1/2)\Re(\zeta e^{in\theta})r^n+O\left(r^\beta\right)\\
	&\leq  (s(\theta)-1/2)\Re(\zeta e^{in\theta})r^n/2<0
    \end{split}	
    \end{equation}
for all $r$ sufficiently large.
In other words, $h_S(\theta)<0$ for almost all $\theta$ for which $h_T(\theta)>0$. Similarly,
for almost all $\theta$ for which $h_T(\theta)<0$, we have
    	$$
	\log^+|L(re^{i\theta})|=O\left(r^\beta\right)
	$$
by \eqref{B} and \eqref{L}, and
	\begin{eqnarray*}
	\log^+|S(re^{i\theta})|&\leq& \log^+|N(re^{i\theta})|+O\left(r^\beta\right)\\
	&\leq& \log^+\left|2\frac{L'}{L}\cdot L\right|+\log^+|tL|+O\left(r^\beta\right)=O\left(r^\beta\right).
	\end{eqnarray*}
Thus the Phragm\'en-Lindel\"of principle again implies that $\rho(S)\leq \beta$, and hence $\rho(N)\leq \beta$.
Therefore we may find an $R$-set $E_4\supset E_3$ such that
	$$
	|\log |S(z)||\leq |z|^\gamma,\quad z\not\in E_4.
	$$
But this contradicts the fact that \eqref{SS} holds for $\theta$ in a set of positive measure.

We deduce by \eqref{crux} that $L=cT^{1/2}$ for some $c\in\C\setminus\{0\}$. Hence
	$$
	F=cT^{1/2}-t/4\quad\textrm{and}\quad F'=ctT^{1/2}/2-t'/4,
	$$
which together with \eqref{e3} imply
	$$
	-T-B=F'+F^2=c^2T+t^2/16-t'/4.
	$$
This yields $c^2=-1$ and $B=-t^2/16+t'/4$, which proves \eqref{B2}.

If $H$ has a zero of multiplicity $m\geq 1$ at some point $\zeta_0$, then $T(z)=a_m(z-\zeta_0)^m(1+o(1))$  and
	$$
	\frac{T'(z)}{T(z)}=\frac{m}{z-\zeta_0}(1+o(1))
	$$
near $\zeta_0$. But then $B$ has a double pole at $\zeta_0$ by \eqref{B2}, which is a contradiction. Hence $H$
has no zeros. Thus $P(z)=\zeta z^n+\log H(z)$ is a polynomial of degree $n$. The proof of the fact that
\eqref{lde2} has a zero-free solution base $\{f_1,f_2\}$ is similar to the reasoning in \cite[p.~8]{BLL} or \cite[p.~356]{Bank-Laine}.
If $g$ is any linear combination of $f_1,f_2$, then $\lambda(g)=\infty$. Thus, if a nontrivial solution $f$
of \eqref{lde2} satisfies $\lambda(f)<n$, it must be a constant multiple of one of $f_1,f_2$, and hence
has no zeros. \hfill$\Box$

\section{Oscillation in sectors}\label{angularradial-sec}

Instead of oscillation, Steinbart takes a step in the opposite direction by proving three results
on non-oscillation in the case when the coefficient $A(z)$ is an exponential polynomial \cite{Sbart}.
Under certain assumptions, the equation \eqref{lde2} is proven to be non-oscillatory in a sector,
that is, every non-trivial solution of \eqref{lde2} has at most finitely many zeros in the sector in
question. The main results in \cite{Sbart} are quite
technical, and the proofs are very involved relying on Strodt's theory.

Assuming that the coefficient $A(z)$ is entire and sufficiently small in an unbounded quasidisc $D$,
Hinkkanen and Rossi proved \cite{HR} that any nontrivial solution of \eqref{lde2} has at most one
zero in $D$. As a consequence of their main result, if $A(z)=P(z)e^{Q(z)}$, where $P(z),Q(z)$ are polynomials,
one can usually take $D$ to be a sector of opening $\pi/\deg(Q)$.

The next result is stated for an entire coefficient $A(z)$ of finite order, but the assumptions are
quite typical properties of exponential polynomials. The proof is rather elementary and relies on a
well known univalence criterion due to Nehari.

\begin{theorem}\label{fewzeros-thm}
Let $A(z)$ be an entire function of order $\rho(A)=\rho\in [1,\infty)$. Suppose that there exists two zeros
$\alpha,\beta\in\R$ of the indicator function $h_A$ such that $0<\beta-\alpha\leq\pi/\rho$ and
$h_A(\theta)<0$ for $\theta\in (\alpha,\beta)$. Then, for
any $\veps>0$, every nontrivial solution $f$ of \eqref{lde2} has at most finitely many zeros in the
sector $\alpha+\veps< \arg(z)< \beta-\veps$. The number of zeros may depend on $\veps$.
\end{theorem}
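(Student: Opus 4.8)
The plan is to exploit the hypothesis that $h_A<0$ on the open sector $(\alpha,\beta)$ in order to show that on a slightly smaller sector the coefficient $A(z)$ decays fast enough that the Schwarzian-type univalence criterion of Nehari applies to solutions of \eqref{lde2}, from which a bound on the number of zeros follows. First I would fix $\veps>0$ and work on the sector $\Omega=\{z:\alpha+\veps<\arg z<\beta-\veps,\ |z|>r_0\}$. Since $A(z)$ is entire of finite order and of bounded type, for almost every direction $\theta$ we have the sharp asymptotics $\log|A(re^{i\theta})|=(h_A(\theta)+o(1))r^\rho$; combining this with a Phragm\'en–Lindel\"of argument (the indicator $h_A$ is continuous, being bounded, and is negative on the compact subinterval $[\alpha+\veps,\beta-\veps]$, say bounded above by $-c<0$), I would deduce that there is a constant $c>0$ with $|A(z)|\le e^{-c|z|^\rho}$ for all $z\in\Omega$ with $|z|$ large, hence in particular
\[
|A(z)|\le \frac{1}{2\,(\operatorname{dist}(z,\partial\Omega))^{2}}
\quad\text{ and moreover }\quad \int_{\Omega}|A(z)|\,|dz|<\infty
\]
along every line segment.

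The second step is geometric: I would change variables by a conformal map $\varphi$ sending a half-plane (or a disc) onto the sector $\Omega$, or more simply work directly with the Nehari criterion adapted to convex domains. Recall Nehari's theorem: if $g$ is analytic in a convex domain and $|S_g(z)|\le 2/\operatorname{dist}(z,\partial D)^2$, then $g$ is univalent; here $S_g$ is the Schwarzian derivative. For two linearly independent solutions $f_1,f_2$ of \eqref{lde2}, the quotient $w=f_1/f_2$ satisfies $S_w=2A$. Thus on any convex subdomain $D\subset\Omega$ where $|A(z)|\le (\operatorname{dist}(z,\partial D))^{-2}$, the quotient $w$ is univalent, and in particular $f_1$ (or any nontrivial solution, written as a combination) has at most one zero there. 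The key is that $\Omega$, being a sector, can be covered by boundedly many convex pieces — or, after the conformal change of variables pulling $\Omega$ back to a half-strip, one covers a half-strip by countably many unit squares, on all but finitely many of which the transformed coefficient is small enough for Nehari. Either way one concludes that $f$ has at most finitely many zeros in the slightly-shrunk sector.

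The third step is bookkeeping: translate "at most one zero per convex piece" into "finitely many zeros in $\Omega$". Because the decay $|A(z)|\le e^{-c|z|^\rho}$ is super-polynomial, the Nehari inequality holds on convex pieces whose size grows (one can take squares of side $\sim |z|$, say, once $e^{-c|z|^\rho}\le |z|^{-2}$), so only finitely many pieces near $\partial\Omega$ or near $|z|=r_0$ fail the hypothesis; on each such failing piece the number of zeros of $f$ is still finite because $f$ is entire and the pieces are bounded, while on the good pieces there is at most one zero each and the good pieces can be arranged to overlap so that a zero is not counted infinitely often — a finite subcover argument near the two boundary rays suffices. The opening restriction $\beta-\alpha\le\pi/\rho$ enters precisely here: it guarantees that after the natural conformal uniformization the image of the sector is a half-plane or half-strip (opening $\le\pi$), which is convex, so that Nehari is applicable; a larger opening would destroy convexity.

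The main obstacle I anticipate is the passage from the pointwise/almost-everywhere indicator asymptotics to a genuine \emph{uniform} exponential upper bound $|A(z)|\le e^{-c|z|^\rho}$ throughout the closed subsector: this requires a Phragm\'en–Lindel\"of-type argument to control $A$ on the exceptional directions (where the limsup in \eqref{indicator-def} need not be a limit) and, more delicately, to control $A$ near the boundary rays $\arg z=\alpha,\beta$ where $h_A$ vanishes. Shrinking to $[\alpha+\veps,\beta-\veps]$ is exactly the device that sidesteps the boundary issue, and the finite-order hypothesis together with boundedness of $h_A$ makes the Phragm\'en–Lindel\"of step routine, but it must be done carefully; everything after that (Nehari, covering, counting) is elementary.
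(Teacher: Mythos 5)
Your first step---using the fundamental relation for the indicator to get $h_A\le -\delta_0<0$ on $[\alpha+\veps,\beta-\veps]$ and then Phragm\'en--Lindel\"of for an angle to obtain the uniform bound $|A(z)|\le e^{-\delta|z|^\rho}$ on the truncated subsector---is exactly what the paper does, and your worry about the ``pointwise/a.e.\ to uniform'' passage is well placed; the paper handles it the same way.

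The final step, however, has a genuine gap. You propose to cover the truncated sector by convex pieces (squares of side $\sim|z|$) on which the classical Nehari bound $|A|\le \operatorname{dist}(\cdot,\partial D)^{-2}$ holds, and to conclude ``at most one zero per good piece, finitely many bad pieces, hence finitely many zeros.'' But a sector is unbounded, so even with pieces whose size grows like $|z|$ you need \emph{infinitely} many pieces to cover it, and ``one zero per piece'' then only gives countably many zeros, not finitely many. The overlap trick you mention prevents double counting but does not bound the count. This is precisely the obstacle that makes the naive piece-by-piece Nehari argument fail, and the paper circumvents it by a different device: it builds a \emph{single} conformal map $\Phi=L\circ T$ from the unit disc onto the entire truncated sector (a M\"obius--square-root composition $T$ onto a lens, followed by a power map $L$ onto the sector), computes $S_\Phi$ explicitly using the Schwarzian chain rule, and verifies $(1-|z|^2)^2|S_{h\circ\Phi}(z)|\le 2$ only on an annulus $x_0<|z|<1$. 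The classical Nehari theorem is not applicable there because the bound fails near the origin of $\D$; instead the paper invokes an \emph{extension} of Nehari's theorem (Schwarz, Trans.\ AMS 1955, Corollary~1) which asserts finite valence when the Schwarzian inequality holds only near $\partial\D$. That finite-valence version of Nehari is the missing ingredient in your argument: without it, neither the covering nor the single-map approach yields finiteness. You would also need to verify that the Schwarzian of the uniformizing map itself stays strictly below the Nehari bound near $\partial\D$ (this is where the inequality $\gamma=(\beta-\alpha-2\veps)/\pi<1/\rho\le 1$ is used), leaving room for the contribution $2A(\Phi(z))\Phi'(z)^2$, which requires the $O((1-|z|)^{-3})$ growth of $\Phi'$ to be swamped by the exponential decay of $A$---again a computation your sketch does not carry out.
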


\begin{remark}
In fact, the result holds if $A(z)$
only tends to zero exponentially in some sector $S$ as in \eqref{A-estimate}, but independently on the order
of $A(z)$ or the indicator of $A(z)$. However, the authors feel that the current statement is more natural
due to two reasons: (a) Functions of order $<1/2$ do not have this property by the $\cos \pi\rho$-theorem.
(b) If an entire function $A(z)$ is of order $\rho<1$ and tends to zero in an angle of opening $\pi/\rho$, then it must vanish
identically by theorems due to Phragm\'en-Lindel\"of and Liouville.
\end{remark}

\bigskip
\noindent
\emph{Proof of Theorem~\ref{fewzeros-thm}.}
Let $\veps\in (0,(\beta-\alpha)/2)$, and recall the fundamental relation of the indicator from \cite[p.~53]{Levin1}:
    $$
    h_A(\theta)\leq
    \frac{h_A(\theta_1)\sin \rho (\theta_2-\theta)+h_A(\theta_2)\sin\rho (\theta-\theta_1)}{\sin \rho(\theta_2-\theta_1)},
    $$
where $\theta_1<\theta<\theta_2$ and $0<\theta_2-\theta_1<\pi/\rho$. Choose $\theta_1=\alpha+\veps/2$ and $\theta_2=\beta-\veps/2$,
and note that $h_A(\theta_1)<0$ and $h_A(\theta_2)<0$. We conclude that there exists a constant $\delta_0>0$ such that
$h_A(\theta)<-\delta_0$ for every $\theta\in (\alpha+\veps,\beta-\veps)$. By the Phragm\'en-Lindel\"of
theorem for an angle, there exist constants $\delta\in (0,\delta_0)$ and $R>0$ such that
    \begin{equation}\label{A-estimate}
    |A(z)|\leq \exp(-\delta |z|^\rho),\quad z\in S(\alpha+\veps,\beta-\veps,R),
    \end{equation}
where $S(a,b,R)=\{z\in\C : a<\arg(z)<b,\, |z|>R\}$.

Next we construct a conformal mapping $\Phi$ from $\D$ onto $S(\alpha+\veps,\beta-\veps,R)$
in two parts. First we construct a conformal mapping $T$ from $\D$ onto a lens-shaped domain $D$ symmetric
with respect to the $x$-axis and bounded by an arc of $\partial\D$ and by a Poincar\'e line traveling through a point $s\in (0,1)$.
An exact formulation of the mapping $T$ follows by the discussions in \cite[pp.~208-209]{Nehis}.
Alternatively, we may construct $T$ as a composition of elementary transformations as follows:
First we map $\D$ onto the right half-plane $H$ by means of $z\mapsto (1+z)/(1-z)$. Then the principal branch
of the square root maps $H$ onto the sector $|\arg(z)|<\pi/4$. We map this back into the unit disc and
onto a lens shaped domain with straight angles at the points $z=\pm 1$ by means of $z\mapsto (z-1)/(z+1)$.
Finally, after a suitable dilatation $z\mapsto tz$ for some $t\in (0,1)$, rotation $z\mapsto iz$, and a translation
$z\mapsto z+u$ for some $u\in (0,1)$, we reach our target domain $D$. This gives us
\begin{equation}\label{conformal-map}
    T(z)=\frac{(u+it)\sqrt{1+z}+(u-it)\sqrt{1-z}}{\sqrt{1+z}+\sqrt{1-z}},
    \end{equation}
where $u=(1+s)/2$ and $t=(1+\sqrt{2})(1-s)/2$. Second, a conformal mapping $L$
from $D$ onto $S(\alpha+\veps,\beta-\veps,R)$ is of the form
    \begin{equation}\label{confmap}
    L(z)=e^{i\varphi}\left(\frac{1+z}{1-z}\right)^\gamma,
    \end{equation}
where $\gamma=(\beta-\alpha-2\veps)/\pi<(\beta-\alpha)/\pi\leq 1/\rho\leq 1$ and $\varphi=(\beta-\alpha)/2$.
Now $\Phi=L\circ T$ is the mapping we are looking for.

The Schwarzians of $L$ and $T$ can be computed directly (by computer).
Alternatively, we may rely on the discussions in \cite[pp.~208-209]{Nehis}. We have
    $$
    S_T(z) = \frac{3}{2(1-z^2)^2}\quad\textrm{and}\quad
    S_L(z) = \frac{2(1-\gamma^2)}{(1-z^2)^2}.
    $$
Note in particular that the Schwarzians are independent on the constants $u,t,\varphi$.
It follows that
    \begin{eqnarray*}
    S_\Phi(z) &=& S_L(T(z))T'(z)^2+S_T(z)\\
    &=&\frac{2(1-\gamma^2)}{(1-T(z)^2)^2}T'(z)^2+\frac{3}{2(1-z^2)^2},
    \end{eqnarray*}
where
    $$
    |T'(z)|^2=\frac{4t^2}{|2+2\sqrt{1-z^2}|^2|1-z^2|}\leq \frac{1}{1-|z|^2},\quad z\in\D.
    $$
If $|1-z^2|< 1/2$, we use the continuity and the construction of $T(z)$ together with $T(-i)=1$
to conclude that $|1-T(z)^2|>\delta>0$. Thus
    $$
    (1-|z|^2)^2|S_\Phi(z)|\leq 2(1-\gamma^2)(1-|z|^2)/\delta^2+3/2.
    $$
If $|1-z^2|\geq 1/2$, we use the Schwarz-Pick lemma $(1-|z|^2)|T'(z)|\leq 1-|T(z)|^2$, and obtain
    $$
    (1-|z|^2)^2|S_\Phi(z)|\leq 2(1-\gamma^2)+6(1-|z|^2)^2.
    $$
Hence, in all cases there exists a constant $x_0\in (0,1)$ such that
    $$
    (1-|z|^2)^2|S_\Phi(z)|<2,\quad x_0<|z|<1.
    $$

Let $\{f_1,f_2\}$ be a fundamental solution base of \eqref{lde2}. Define $h=f_1/f_2$, so that
$S_h(z)=2A(z)$. The Schwarzian of $g=h\circ \Phi$ is given by
    $$
    S_g(z)=2A(\Phi(z))\Phi'(z)^2+S_\Phi(z).
    $$
Since $\Phi$ is univalent, we have $|\Phi'(z)|=O\left(1/(1-|z|)^3\right)$.
Hence, if $R>0$ is large enough, or, in other words, if $s\in (0,1)$ is close enough to one, then
\eqref{A-estimate} yields
    $$
    (1-|z|^2)^2|S_g(z)|\leq 2,\quad x_0<|z|<1.
    $$
By an extension of classical Nehari's theorem \cite[Corollary~1]{Schwarz}, we find that $g$ is finitely valent in $\D$.
In other words, $h$ is finitely valent in $S(\alpha+\veps,\beta-\veps,R)$, that is, any solution of \eqref{lde2} has at most
finitely many zeros in $S(\alpha+\veps,\beta-\veps,R)$. Hence the assertion follows. \hfill$\Box$

\begin{example}
If $A(z)=e^z+e^{2z}+e^{3z}$, then $h_A(\theta)=\cos\theta <0$ for all $\theta\in(\pi/2,3\pi/2)$,
and hence the solutions of \eqref{lde2} have at most finitely many zeros in the (slightly squeezed)
left half-plane.
\end{example}

\begin{example}
Let $A(z)$ be a solution of a differential equation $g''+P(z)g=0$, where $P(z)=a_nz^n+\cdots +a_0$ is a
polynomial of degree $n$. Then $\rho(A)=(n+2)/2$ by \cite[Proposition~5.1]{Laine}. Moreover, by Hille's method of
asymptotic integration \cite[Section 7.4]{Hil}, it is known that $A(z)$ either blows up or converges
to zero exponentially in sectors determined by the critical rays $\arg (z)=\theta_j$, where
    $$
    \theta_j=\frac{1}{n+2}(2\pi j-\arg (a_n)),\quad j=0,\ldots ,n+1.
    $$
Since $\theta_{j+1}-\theta_j=2\pi/(n+2)=\pi/\rho(A)$, we find that the assumptions in Theorem~\ref{fewzeros-thm}
are quite typical for solutions $A(z)$ of $g''+P(z)g=0$. If $n$ is an odd integer, then $A(z)$ is certainly
not an exponential polynomial.
\end{example}

\begin{theorem}\label{real-solutions}
Let $P(z)=a_nz^n+\cdots +a_0$ be a polynomial with real coefficients and $a_n>0$. Define $A(z)=P(e^z)$.
Then the number of zeros $n([0,r),f)$ of a nontrivial real solution $f$ of \eqref{lde2} on the interval
$[0,r)$ satisfies $n([0,r),f)\geq Ce^{nr/2}$ for some constant $C>0$.
\end{theorem}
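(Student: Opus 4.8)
The plan is to work entirely on the real line and to invoke Sturm's comparison theorem. On $\R$ the coefficient is $A(x)=P(e^x)=a_ne^{nx}+\cdots+a_0$, which is real-valued; since $a_n>0$ we have $A(x)=a_ne^{nx}(1+o(1))$ as $x\to+\infty$, so there is $x_1>0$ with
\begin{equation*}
A(x)\ \ge\ \tfrac{a_n}{2}\,e^{nx}\ >\ 0,\qquad x\ge x_1.
\end{equation*}
Fix a nontrivial real solution $f$ of \eqref{lde2}. By uniqueness for the initial value problem its zeros are simple, hence isolated, so $n([0,r),f)<\infty$ for every $r$; the positivity of $A$ on $[x_1,\infty)$ already shows $f$ is oscillatory there, but this will also fall out of the comparison argument below.

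First I would localize to a short interval anchored at the right endpoint. Let $r\ge x_1+1$. For $x\in[r-1,r]$, using only that $t\mapsto e^{nt}$ is increasing,
\begin{equation*}
A(x)\ \ge\ \tfrac{a_n}{2}\,e^{nx}\ \ge\ \tfrac{a_n}{2}\,e^{n(r-1)}\ =:\ M(r)^2,\qquad M(r)=\sqrt{\tfrac{a_n}{2}}\;e^{n(r-1)/2}.
\end{equation*}
Comparing \eqref{lde2} with the constant-coefficient equation $y''+M(r)^2y=0$, whose solution $y(x)=\sin\!\big(M(r)(x-r+1)\big)$ vanishes exactly at the points $x_k=r-1+k\pi/M(r)$, Sturm's comparison theorem forces $f$ to vanish strictly between each pair of consecutive zeros $x_{k-1},x_k$ lying in $[r-1,r]$. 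There are at least $\lfloor M(r)/\pi\rfloor$ such pairs, sitting in pairwise disjoint open subintervals, so the corresponding zeros of $f$ are distinct and
\begin{equation*}
n([r-1,r),f)\ \ge\ \Big\lfloor \tfrac{M(r)}{\pi}\Big\rfloor\ \ge\ \tfrac{M(r)}{\pi}-1 .
\end{equation*}

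Since $[r-1,r)\subset[0,r)$, this gives
\begin{equation*}
n([0,r),f)\ \ge\ \frac{M(r)}{\pi}-1\ =\ \frac{1}{\pi}\sqrt{\tfrac{a_n}{2}}\;e^{-n/2}\,e^{nr/2}-1 ,
\end{equation*}
and for $r$ large the $-1$ is absorbed, so $n([0,r),f)\ge Ce^{nr/2}$ with, e.g., $C=\tfrac{1}{2\pi}\sqrt{a_n/2}\,e^{-n/2}$. I do not expect a genuine obstacle here: the only things to get right are the standard zero-count bookkeeping inside Sturm comparison and the observation that the correct exponent $n/2$ comes from localizing to an interval of \emph{bounded} length $[r-1,r]$, on which $\sqrt{A}$ is already of the full size of order $e^{nr/2}$ — comparing instead over $[0,r]$ or $[r/2,r]$ would only yield a strictly slower rate of growth. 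One should also note that the inequality is meant for all sufficiently large $r$, since trivially $n([0,r),f)=0$ for $r$ below the first zero of $f$; alternatively one could sum the same estimate over the unit intervals $[k,k+1]\subset[0,r]$ to reach the same conclusion.
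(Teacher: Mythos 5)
Your proof is correct, and it takes a genuinely more elementary route than the paper. The paper compares $f$ with solutions $y$ of the variable-coefficient equation $y''+q(x)y=0$, where $q(x)=\tfrac{a_n}{2}e^{nx}$, and then invokes a nontrivial asymptotic-integration result (Hartman, Corollary~5.3, or Hille, Theorem~9.5.1) to get $n_0([0,x),y)\sim \tfrac{1}{\pi}\int_0^x q(t)^{1/2}\,dt$; after restricting the integral to $[x-1,x]$ and applying Sturm comparison, the stated bound follows. You instead localize \emph{before} applying Sturm: on the unit interval $[r-1,r]$ you replace $A$ by the constant lower bound $M(r)^2=\tfrac{a_n}{2}e^{n(r-1)}$, compare with the explicit solution $\sin\bigl(M(r)(x-r+1)\bigr)$, and count zeros directly. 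This completely sidesteps the asymptotic-integration machinery and uses nothing beyond the most classical form of Sturm comparison, while producing the same exponent $e^{nr/2}$; the price is a slightly cruder constant. Your remarks about why a bounded interval (rather than $[0,r]$ or $[r/2,r]$) is essential for recovering the full exponent, and about the inequality being meaningful only for $r$ large, are accurate and match the (implicit) scope of the theorem as proved in the paper.
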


\begin{proof}
Define $q(x)=\frac{a_n}{2}e^{nx}$ for $x\in [0,\infty)$. Then $q(x)$ and $q'(x)$ are continuous with
$q'(x)q(x)^{-3/2}\to 0$ as $x\to\infty$. Using either \cite[Corollary~5.3]{Hart} or \cite[Theorem 9.5.1]{Hil},
we conclude the following: The number of zeros $n_0([0,x),y)$ of a non-trivial solution $y$ of $y''+q(x)y=0$
on the interval $[0,x)$ for large $x$ satisfies
    \begin{eqnarray*}
    n_0([0,x),y) &\sim& \frac{1}{\pi}\int_0^x q(t)^{1/2}\, dt\geq \frac{1}{\pi}\int_{x-1}^xq(t)^{1/2}\, dt\\
    &=& \frac{\sqrt{2a_n}}{n\pi}\left(1-e^{-n/2}\right)e^{nx/2}\geq \frac{\sqrt{2a_n}\cdot3}{10n\pi}e^{nx/2}.
    \end{eqnarray*}
On the other hand, $A(z)$ is real-valued on the real axis, and $A(x)\geq q(x)$ for $x\geq r_0$. Thus
all solutions of \eqref{lde2} are constant multiples of real entire functions. By the
standard Sturm comparison theorem, any real solution $f$ vanishes at least once between any two zeros of $y$.
Hence the magnitude of $n([0,x),f)$ is at least that of $n_0([0,x),y)$. This yields the assertion.
\end{proof}

\begin{example}
If $P(z)=-z^2-z$, then
\eqref{lde2} with $A(z)=P(e^z)$ has a zero-free solution $f(z)=\exp\left(e^z\right)$.
Hence the assumption $a_n>0$ in Theorem~\ref{real-solutions} is essential.
\end{example}

Suppose that $A(z)$ is an exponential polynomial, and $\theta^*$
is such that $h_A(\theta^*)=\max_\theta h_A(\theta)$ $(>0)$. We prove that most
solutions of \eqref{lde2} have an infinite exponent of convergence in an arbitrarily small sector
    $$
    S(\theta^*,\veps)=\{z\in\C : |\arg(z)-\theta^*|<\veps\}.
    $$
This is in contrast to the situation in Theorem~\ref{fewzeros-thm}.

\begin{theorem}\label{sector-thm}
Let $A(z)$ be an exponential polynomial of order $n$, let $\theta^*\in\R$ be such that $h_A(\theta^*)=\max_\theta h_A(\theta)$,
and let $\veps \in (0,\pi/2n)$. Moreover, suppose that \eqref{lde2} has a solution base $\{f_1,f_2\}$, and let $f_3$ be any linear
combination of $f_1,f_2$ but not of the form $cf_j$. Then
    $$
    \max_{j=1,2,3}\{\lambda_2(S(\theta^*,\veps),f_j)\}=n,
    $$
where $\lambda_2(S(\theta^*,\veps),f_j)$ is the hyper-exponent of convergence of the zeros of $f_j$ in
the sector $S(\theta^*,\veps)$.
\end{theorem}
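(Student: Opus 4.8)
\medskip\noindent\emph{Proof proposal.}
The plan is to argue by contradiction and to localize the problem to a thin subsector of $S(\theta^*,\veps)$ on which \eqref{lde2} can be integrated asymptotically. Assume $\lambda_2(S(\theta^*,\veps),f_j)<n$ for all three indices $j=1,2,3$. First recall from Section~\ref{elementary-sec} that $h_A(\theta)=\max_j\Re(\zeta_je^{in\theta})$, so $h_A(\theta^*)=\max_\theta h_A(\theta)=\max_j|\zeta_j|=:|\zeta_{j_0}|$ and $\arg\zeta_{j_0}+n\theta^*\equiv 0\pmod{2\pi}$; hence $h_A(\theta)\ge|\zeta_{j_0}|\cos(n(\theta-\theta^*))>0$ on $(\theta^*-\pi/2n,\theta^*+\pi/2n)\supset(\theta^*-\veps,\theta^*+\veps)$, which is exactly where the hypothesis $\veps<\pi/2n$ enters. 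Since the $\zeta_i$ are pairwise distinct, the term $H_{j_0}(z)e^{\zeta_{j_0}z^n}$ strictly dominates the remaining terms of $A$ at $\theta^*$, hence on $[\theta^*-\veps',\theta^*+\veps']$ for some $\veps'\in(0,\veps)$; fix such an $\veps'$, choose $\delta>0$ with $h_A(\theta)\ge\delta$ there, and set $\Sigma=S(\theta^*-\veps',\theta^*+\veps',R)$ for a large $R$. By \eqref{f-behavior} and the complete regularity of growth of $A$ one has $A(z)=H_{j_0}(z)e^{\zeta_{j_0}z^n}(1+o(1))$ as $z\to\infty$ in $\Sigma$; in particular $|A(z)|\ge e^{(\delta/2)|z|^n}$ there, while by Theorem~\ref{Stein-thm} the zeros of $A$ have counting function $O(r^n)$ and so are negligible below. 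Because $\Sigma\subset S(\theta^*,\veps)$, it suffices to contradict $\lambda_2(\Sigma,f_j)<n$.

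The heart of the argument is an asymptotic integration of \eqref{lde2} in $\Sigma$ of Liouville--Green/Hille type \cite[Ch.~7]{Hil} (compare the example after Theorem~\ref{real-solutions}): since $A$ is non-vanishing and dominant in $\Sigma$ away from negligibly small discs about its zeros, and $A'/A$ has polynomial growth, \eqref{lde2} admits a fundamental system $\{v_+,v_-\}$ with
$$
v_\pm(z)=A(z)^{-1/4}\exp\!\left(\pm\int_{z_0}^{z}\sqrt{-A(t)}\,dt\right)(1+o(1)),\qquad z\to\infty\ \text{in}\ \Sigma,
$$
for one fixed branch of $\sqrt{-A}$ (possible as $\Sigma$ is simply connected and $A\ne0$ on it for $|z|>R$). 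Put $\Omega(z)=\int_{z_0}^{z}\sqrt{-A(t)}\,dt$. Estimating $\Omega$ by its endpoint gives $|\Omega(re^{i\theta})|\asymp r^{1-n}e^{\frac12h_A(\theta)r^n}$ and, up to a slowly varying term, $\arg\Omega(re^{i\theta})\sim\frac12\Im(\zeta_{j_0}e^{in\theta})r^n$; hence, as $z$ traverses the arc $\{|z|=r\}\cap\Sigma$, the curve $\Omega(z)$ winds about the origin of order $r^n$ times while staying at distance up to $e^{cr^n}$ from it. The key lemma is then that \emph{every solution $v=\alpha v_++\beta v_-$ with $\alpha\beta\ne0$ has at least $e^{c'r^n}$ zeros in $\Sigma\cap\{|z|<r\}$ for some $c'>0$, hence $\lambda_2(\Sigma,v)\ge n$.} Indeed, for large $|z|$ in $\Sigma$ the zeros of $v$ are, modulo the $(1+o(1))$, the solutions of $e^{2\Omega(z)}=-\beta/\alpha$, i.e.\ of $\Omega(z)\in w_0+\pi i\Z$ for a fixed $w_0$; applying the argument principle to $\Omega-w$ over $\Sigma\cap\{R<|z|<r\}$ and summing over those $w\in w_0+\pi i\Z$ whose modulus is comparable to $\max_{\partial}|\Omega|$ (there are $\asymp r^{1-n}e^{\frac12\delta r^n}$ of them) yields the stated lower bound, the $(1+o(1))$ being absorbed by Rouch\'e's theorem.

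With the lemma in hand the theorem follows quickly. If $f_1=\alpha_1v_++\beta_1v_-$ had $\alpha_1\beta_1\ne0$, the lemma would give $\lambda_2(\Sigma,f_1)\ge n$, contradicting the contrary assumption; hence $f_1$ is a non-zero multiple of $v_+$ or of $v_-$, and likewise for $f_2$. Being linearly independent, $f_1$ and $f_2$ cannot lie on the same one of these two lines, so after relabelling $f_1=\alpha_1v_+$ and $f_2=\beta_2v_-$ with $\alpha_1\beta_2\ne0$. Writing $f_3=c_1f_1+c_2f_2$ with $c_1c_2\ne0$ (the precise meaning of ``$f_3$ is not of the form $cf_j$''), we get $f_3=c_1\alpha_1v_++c_2\beta_2v_-$ with both coefficients non-zero, whence $\lambda_2(\Sigma,f_3)\ge n$ by the lemma --- contradicting $\lambda_2(\Sigma,f_3)\le\lambda_2(S(\theta^*,\veps),f_3)<n$. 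Thus not all three hyper-exponents can be $<n$, and since $\lambda_2(S(\theta^*,\veps),f_j)\le n$ for every solution (standard, as $A$ is an exponential polynomial of order $n$), the displayed maximum equals $n$.

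The step I expect to be the main obstacle is making the asymptotic integration rigorous \emph{uniformly over the two-dimensional sector $\Sigma$} rather than along a single ray --- i.e.\ pushing the Hille theory so that the error is $o(1)$ uniformly on closed subsectors and the $O(r^n)$ turning points (zeros of $A$) are excised --- and then converting the heuristic ``$\Omega$ winds $\gtrsim r^n$ times out to radius $e^{cr^n}$'' into the clean bound $\gtrsim e^{c'r^n}$ through a careful bookkeeping of $\oint d\arg(\Omega-w)$ over $\partial(\Sigma\cap\{R<|z|<r\})$, where the radial edges contribute cancelling $\Theta(r^n)$ terms and one must extract the growth from the far-out values of the progression $w_0+\pi i\Z$. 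An equivalent route, parallel to the proof of Theorem~\ref{fewzeros-thm}, transplants $h=f_2/f_1$ (which satisfies $S_h=2A$ and has its poles, zeros and $a$-points, $a=-c_1/c_2$, at the zeros of $f_1$, $f_2$, $f_3$ respectively) to $\D$ by a conformal map $\Phi$ of $\D$ onto a subsector of $\Sigma$; near the relevant boundary arc $\log^+|S_{h\circ\Phi}|$ is exponentially large, and since $f_1,f_2$ are essentially zero-free there one has $h\circ\Phi\approx e^{\pm2\,\Omega\circ\Phi}$, forcing the $a$-points of $h\circ\Phi$ --- i.e.\ the zeros of $f_3$ --- to be exponentially dense in $\Sigma$.
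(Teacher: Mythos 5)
Your proposal follows a genuinely different route from the paper's and, as you yourself flag, it is not yet a proof. The paper's argument is short: setting $F=f_1/f_2$ (so zeros/poles/$1$-points of $F$ are the zeros of $f_1,f_2,f_3$) and using $2A=S_F$, it estimates $m_S(r,A)\gtrsim r^n$ (since $h_A>0$ on $[\theta^*-\veps,\theta^*+\veps]$ by Levin's indicator inequality) and then feeds this into the Tsuji \emph{angular} second main theorem and the angular lemma on the logarithmic derivative. The chain of inequalities terminates in $Cr^n\leq 3\max_{j}\log^+N_S(2r,0,f_j)+O(\log^+\log^+ T_S(2r,F))+O(\log r)$, which is exactly the lower bound $\max_j\lambda_2(S(\theta^*,\veps),f_j)\geq n$. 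No asymptotic integration, no turning-point analysis, no winding. The machinery is in \cite{Zheng}. Your proposal instead asserts a much stronger local statement -- that every asymptotically ``mixed'' solution has $\gtrsim e^{c'r^n}$ zeros in $\Sigma\cap\{|z|<r\}$ -- which, if carried through, would give more, but that is precisely where it stops being a proof.

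The concrete gaps are the two you anticipate. (i) You invoke a Liouville--Green fundamental system $v_\pm\sim A^{-1/4}e^{\pm\Omega}$ with $o(1)$ error \emph{uniformly on a two-dimensional closed subsector} for an exponential-polynomial coefficient of order $n$, while $A$ has $\sim r^n$ zeros (turning points) in $\{|z|<r\}$ that must be excised. The references you cite \cite{Hil, Hart} establish this for polynomial or real-line coefficients; for exponential coefficients in a complex sector this is the content of Strodt's theory, which Steinbart \cite{Sbart} relies on and which the paper explicitly characterizes as ``very involved.'' Simply asserting the expansion does not discharge the obligation. (ii) The zero count is heuristic: passing from ``$\Omega$ winds about $r^n$ times out to modulus $e^{cr^n}$'' to $\#\{z\in\Sigma\cap\{R<|z|<r\}:\Omega(z)\in w_0+\pi i\Z\}\gtrsim e^{c'r^n}$ requires computing $\oint d\arg(\Omega-w)$ over $\partial(\Sigma\cap\{R<|z|<r\})$ for each $w$, controlling the radial-edge contributions (they are $\Theta(r^n)$, not obviously negligible next to each other), and then keeping Rouch\'e under the $(1+o(1))$ errors from step (i). Neither of these is done. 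The logical skeleton around the lemma is correct (pure $v_\pm$ forces $f_1,f_2$ onto two lines, $f_3=c_1f_1+c_2f_2$ with $c_1c_2\neq0$ is mixed), and your upper bound $\lambda_2\leq n$ is standard; but without (i) and (ii) the proposal is an outline, not a proof. The paper's Tsuji-characteristic route bypasses both obstacles and is the one you should study; the conformal-transplantation alternative you sketch at the end is closer in spirit to Theorem~\ref{fewzeros-thm} than to the actual proof of this theorem.
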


\begin{proof}
For simplicity, we may suppose that $f_3=f_1-f_2$. Let $F=f_1/f_2$. Then the zeros, poles and $1$-points of $F$ correspond to the zeros of $f_1,f_2,f_3$,
respectively. Moreover, $2A(z)=S_F(z)$, where
    $$
    S_F(z)=\frac{F'''(z)}{F'(z)}-\frac32\left(\frac{F''(z)}{F'(z)}\right)^2
    $$
is the Schwarzian derivative of $F$. For brevity, we indicate the angular domain $S(\theta^*,\veps)$ by
subindex $S$, and denote the angular Tsuji characteristic by $T_S(r,g)=m_S(r,g)+N_S(r,g)$, where
$g$ is any meromorphic function. See \cite[p.~58]{Zheng} for the definitions of these functions,
but with different notation.

Recall from \cite[p.~56]{Levin1} that
    $$
    h_A(\theta)\geq h_A(\theta^*)\cos \left(n(\theta-\theta^*)\right)
    $$
for $|\theta-\theta^*|\leq \pi/n$. Hence, for our choice of $\veps\in (0,\pi/2n)$, we have $h_A(\theta)>0$ for all $\theta\in [\theta^*-\veps,\theta^*+\veps]$.
Since $A(z)$ grows fast in the sector $S(\theta^*,\veps)$, we conclude by the angular second main theorem \cite[p.~59]{Zheng}
and by the angular lemma on the logarithmic derivative \cite[pp.~85-86]{Zheng}, that
    \begin{eqnarray*}
    Cr^n &\leq& m_S(r,A)\leq m_S\left(r,\frac{F'''}{F'}\right)
    +2m_S\left(r,\frac{F''}{F'}\right)+O(1)\\
    &=& O\left(\log^+ T_S(2r,F)\right)+O(\log r)\\
    &\leq& \log^+N_S(2r,\infty,F)+\log^+N_S(2r,0,F)+\log^+N_S(2r,1,F)\\
    &&+O(\log^+\log^+T_S(2r,F))+O(\log r)\\
    &\leq & 3\max_{j=1,2,3}\{\log^+N_S(2r,0,f_j)\}+O(\log^+\log^+T_S(2r,F))+O(\log r)
    \end{eqnarray*}
outside of a possible exceptional set of $r$-values of finite linear measure. Since both sides of
this inequality are increasing functions, we may avoid this exceptional set via \cite[Lemma~1.1.1]{Laine}.
This proves $\max_{j=1,2,3}\{\lambda_2(S(\theta^*,\veps),f_j)\}\geq n$. Since the inequalities
    $$
    \max_{j=1,2,3}\{\lambda_2(S(\theta^*,\veps),f_j)\}
    \leq \max_{j=1,2,3}\{\lambda_2(f_j)\}\leq\max_{j=1,2,3}\{\rho_2(f_j)\}\leq n
    $$
are clear, the assertion follows.
\end{proof}

\noindent
\textbf{Acknowledgement.}
The authors want to thank James Langley for helpful discussions. In particular,
Theorem~\ref{referee-thm} is essentially due to Langley.

\footnotesize

\medskip
\noindent
\emph{Janne Heittokangas}\\
\textsc{Taiyuan University of Technology}\\
\textsc{Faculty of Mathematics}\\
\textsc{Yingze West Street, No.~79, Taiyuan 030024, China}\\
\texttt{email:janne.heittokangas@uef.fi}

\medskip
\noindent
\emph{Ilpo Laine \& Janne Heittokangas}\\
\textsc{University of Eastern Finland}\\
\textsc{Department of Physics and Mathematics}\\
\textsc{P.O.~Box 111, 80101 Joensuu, Finland}\\
\texttt{email:ilpo.laine@uef.fi}

\medskip
\noindent
\emph{Katsuya Ishizaki}\\
\textsc{The Open University of Japan}\\
\textsc{2-11 Wakaba Mihama-ku Chiba}\\
\textsc{261-8586, Japan}\\
\texttt{email:ishizaki@ouj.ac.jp}

\medskip
\noindent
\emph{Kazuya Tohge}\\
\textsc{Kanazawa University}\\
\textsc{College of Science and Engineering}\\
\textsc{Kakuma-machi, Kanazawa 920-1192, Japan}\\
\texttt{email:tohge@se.kanazawa-u.ac.jp}

\end{document}